\newtheorem{theorem}{\textbf{Theorem}}[section]
\newtheorem{definition}[theorem]{\textbf{Definition}}
\newtheorem{lemma}[theorem]{\textbf{Lemma}}
\newtheorem{corollary}[theorem]{\textbf{Corollary}}
\newtheorem{remark}[theorem]{\textbf{Remark}}
\newtheorem{problem}[theorem]{\textbf{Problem}}
\def\0{|\mathcal O_{S^{d-1}\cup\tilde\pi}(n)|}
\begin{document}
\title[]{Degenerating curves and surfaces: first results}
\author{Concettina Galati }

\address{via P. Bucci, cubo 30B
87036 Arcavacata di Rende (CS), Italy. }

\email{galati@mat.unical.it}

\thanks{}

\subjclass{14H15; 14H10; 14B05}

\keywords{}

\date{}

\dedicatory{}

\commby{}


\begin{abstract}
Let $\mathcal S\to\mathbb A^1$ be a smooth family of surfaces whose
general fibre is a smooth surface of $\mathbb P^3$ and whose special
fibre has two smooth components, intersecting transversally along a
smooth curve $R$. We consider the Universal Severi-Enriques variety
$\mathcal V$ on $\mathcal S\to\mathbb A^1$. The general fibre of
$\mathcal V$ is the variety of curves on $\mathcal S_t$ in the
linear system $|\mathcal O_{\mathcal S_t}(n)|$ with $k$ cusps and
$\delta$ nodes as singularities. Our problem is to find all
irreducible components of the special fibre of $\mathcal V$. In this
paper, we consider only the cases $(k,\delta)=(0,1)$ and
$(k,\delta)=(1,0)$. In particular, we determine all singular curves
on the special fibre of $\mathcal S$ which, counted with the right
multiplicity, are a limit of $1$-cuspidal curves on the general
fibre of $\mathcal S$.
\end{abstract}


\maketitle
\section{Introduction}\label{introduction}
In this section we introduce our problem and we fix notation. Let
$\mathcal F$ be the pencil of surfaces of the complex projective
space $\mathbb P^3(\mathbb C):=\mathbb P^3$, generated by a general
surface $S^d$ of degree $d\geq 2$ and a reducible surface
$S_0=S^{d-1}\cup \pi$, where $S^{d-1}$ is a general surface of
degree $d-1$ and $\pi$ is a general plane, intersecting $S^{d-1}$
along a smooth curve $R$. The singular locus of the total space
$\mathcal A$ of $\mathcal F$ consists of the $d(d-1)$ points
$p_1,\dots,p_{d(d-1)}$ of the special fibre which are intersection
of $S^d$ and $R=S^{d-1}\cap \pi$. Moreover, at every such a point,
$\mathcal A$ has a rational double singularity. Let $\mathcal
S\to\mathbb A^1$ be the smooth family of surfaces obtained by
smoothing $\mathcal A$ and by contracting the exceptional components
on $\pi$. The general fibre $\mathcal S_t$ of $\mathcal S$ is
isomorphic to the general fibre of $\mathcal A$, while the special
fibre of $\mathcal S$ is $\mathcal S_0=A\cup B$ where $A=S^{d-1}$
and $B$ is the blowing-up of $\pi$ at the $d(d-1)$ singular points
of $\mathcal A$. In particular, we have that
$R^2_{B}=(d-1)^2-d(d-1)=-(d-1)=-R^2_{A}$. From now on, we shall
indicate by $E_1,\dots,\,E_{d(d-1)}$ the exceptional curves of $B$.
Now, by denoting by $H$ the pull-back to $\mathcal S$ of the
hyperplane divisor of $\mathbb P^3$, by $H_t$ the restriction of $H$
to the fibre $\mathcal S_t$ of $\mathcal S$ and by $p_a(d,n)$ the
arithmetic genus of a divisor in $|\mathcal O_{\mathcal
S_t}(nH_t)|$, we consider the locally closed set, in the Zariski
topology, $\mathcal W^\mathcal S_{nH,k,\delta} \subset |\mathcal
O_{\mathcal S}(nH)|\times (\mathbb A^1\setminus\{0\})$ defined by
\begin{eqnarray*}
\mathcal W^\mathcal S_{nH,k,\delta}=\{([D],t)\mid\,\mathcal
S_t\,\textrm{is smooth,}\, D\cap \mathcal S_t:=D_t\in\,\mid\mathcal
O_{\mathcal S_t}(nH_t)\mid\, \textrm{is
irreducible}\\
\textrm{of genus}\,g= p_a(d,n)-k-\delta \textrm{ having}\, \delta\,
\textrm{nodes and}\, k\, \textrm{cusps as singularities}.\}
\end{eqnarray*}
From now on, we will often denote by $|\mathcal O_{\mathcal
S_t}(n)|$ the linear series $|\mathcal O_{\mathcal S_t}(nH_t)|$.
Now, if $\mathcal H_{\mathcal S|\mathbb A^1}$ is the relative
Hilbert Scheme of the family $\mathcal S\to\mathbb A^1$ and if
$\mathcal H_n$ is the irreducible component of $\mathcal H_{\mathcal
S|\mathbb A^1}$ whose fibre over $t$, with respect to the natural
morphism $\mathcal H_{\mathcal S|\mathbb A^1}\to\mathbb A^1$, is
$|\mathcal O_{\mathcal S_t}(nH_t)|$, one defines a natural rational
map
$$f:\mathcal W^\mathcal S_{nH,k,\delta}\dashrightarrow\mathcal H_n\subset\mathcal
H_{\mathcal S|\mathbb A^1},$$ sending the point $([D],t)$ to the
point $[D_t]\in \mathcal H_n$, parametrizing the curve $D_t=D\cap
\mathcal S_t.$ We denote by
$$\mathcal V^\mathcal S_{nH,k,\delta}\subset\mathcal H_n$$
the closure in the Zariski topology of the image of $\mathcal
W^\mathcal S_{nH,k,\delta}$ in $\mathcal H_n$, with respect to $f$.
Since the general fibre $\mathcal V_t$ of $\mathcal
V_{nH,k,\delta}^{\mathcal S}$ is the Severi-Enriques variety
$\mathcal V_t=\mathcal V^{\mathcal S_t}_{nH_t,k,\delta}$ of
irreducible curves on $\mathcal S_t$ in the linear system $|nH_t|$,
with $\delta$ nodes and $k$ cusps as singularities, we call
$\mathcal V^\mathcal S_{nH,k,\delta}$ \textit{the Universal
Severi-Enriques Variety on} $\mathcal S$ of irreducible curves in
the linear system $|nH|$, with $\delta$ nodes and $k$ cusps. The
problem we are interested is the following.
\begin{problem}[Main Problem]\label{main} Let $\mathcal
V_0$ be the special fibre of $\mathcal V_{nH,k,\delta}^\mathcal S$.
Then every its irreducible component $ V_0^i$ will have dimension
equal to $\dim{\mathcal V_{nH,k,\delta}^\mathcal S}-1=\dim(\mathcal
V_t).$ We want to determinate all irreducible components $V_0^i$ of
$\mathcal V_0=\sum_i m_i V_0^i$, with their respective multiplicity
$m_i$.
\end{problem}
This problem has been already studied by Ran in \cite{ran},
\cite{ran1} and \cite{ran2} in the case $k=0$ and $\mathcal S$ equal
to a family whose general fibre $\mathcal S_t$ is a plane and whose
special fibre is $\mathcal S^0=A\cup B$, where $A\simeq\mathbb P^2$
and $B\simeq\mathbb F_1$. Our paper is strongly inspired to Ran's
works but our approach is very different. The techniques we use to
find all possible irreducible components of the special fibre of the
Universal Enriques-Severi variety have been introduced by Ciliberto
e Miranda in \cite{cm} and the aim of this paper is also to show how
the notion of limit linear system can be useful for studying the
problem \ref{main}. Unfortunately, if these kinds of methods are
very helpful in order to determine all irreducible components of
$\mathcal V_0\subset\mathcal V_{nH,k,\delta}^{\mathcal S}$, they are
not enough to find their respective multiplicity. Here we consider
only the cases $(k,\delta)=(0,1)$ and $(k,\delta)=(1,0)$. In both
these cases, by only using Ciliberto and Miranda techniques, we are
able to describe all irreducible components of $\mathcal V_0$ and to
compute their respective "geometric multiplicity", which we are
going to define.
\begin{definition}\label{geometricmultiplicity}
Let $V$ be an irreducible component of the special fibre $\mathcal
V_0$ of the Universal Severi-Enriques variety $\mathcal
V_{nH,k,\delta}^\mathcal S$. The geometric multiplicity of $V$ in
$\mathcal V_0$ is the minimal integer $m$ such that there exist
local analytic $m$-multisections passing through the general element
$[D]$ of $V$ and intersecting the general fibre $\mathcal V_t$ of
$\mathcal V_{nH,k,\delta}^\mathcal S$ at $m$ general points.
\end{definition}
\begin{remark}
Notice that the geometric multiplicity of an irreducible component
$V_0^i$ of $\mathcal V_0$ coincides with the usual multiplicity if
the Universal Severi-Enriques Variety is smooth at the general
element of $V_0^i$. When $(k,\delta)=(1,0)$ or $(k,\delta)=(0,1)$,
the geometric multiplicity of every irreducible $V$ of $\mathcal
V_0$ is in fact the multiplicity of $V$ in $\mathcal V_0$ and, by
counting every irreducible component with multiplicity equal to
geometric multiplicity, we can obtain a recursion formula for the
degree of $\mathcal V_{nH,1,0}^{\mathcal S_t}$ and $\mathcal
V_{nH,0,1}^{\mathcal S_t}$ for every $d\geq 2$. Actually, the well
known formulas of the number of one-nodal curves in a pencil and
one-cuspidal curves in a net can be used to prove that in these
cases geometric multiplicity and multiplicity coincide. To compute
explicitly the tangent space of the Universal Severi-Enriques
Variety at the general point of every its irreducible component is
not trivial, even in the cases $(k,\delta)=(0,1)$ and
$(k,\delta)=(1,0)$ and it requires very different methods from those
we use in this paper. For this reason, we prefer to approach this
problem in a future and more general paper. We just want to say that
geometric multiplicity and multiplicity may be different even for
families of curves with a very few singularities. For example, the
special fibre $\mathcal V_0$ of $\mathcal V_{nH,0,2}$, has an
component (parametrizing curves having two "simple tacnodes " and
nodes at the intersection points with the singular locus $R$ of
$\mathcal S_0$) having, by our computations, geometric multiplicity
$2$ but multiplicity $4$, according to Ran's results.
\end{remark}
Section \ref{one-nodal curves} is devoted to the case
$(k,\delta)=(0,1)$ whereas section \ref{one cuspidal curves} is
devoted to the case $(k,\delta)=(1,0)$. We want to stress that, even
if our proof is different, the results of section \ref{one-nodal
curves} are already known and can be deduced, with some further
observations, by theorems 2.1 and 2.2 of \cite{chenversal}. On the
contrary, as far as we know, the analysis we do of the special fibre
$\mathcal V_0$ of $\mathcal V_{nH,1,0}^\mathcal S$ is completely
new. Techniques we use in this paper can be applied to any smooth
family of surfaces whose special fibre is irreducible and also to
families of curves with singularities different from nodes or cusps.
We conclude this section by introducing some terminology.
\subsection{Terminology} Let $\mathcal X\to \mathbb A^1$
be any family of surfaces obtained from $\mathcal S\to\mathbb A^1$
by a finite number of base changes and blowing-ups and let
$f:\mathcal X\to\mathbb P^3$ be the natural morphism from $\mathcal
X$ to $\mathbb P^3$. By abusing notation, we will denote always by
$H$ the pull-back to $\mathcal X$ of the hyperplane divisor class of
$\mathbb P^3$. Similarly, if $\mathcal Y\to\mathbb A^1$ is a family
of surfaces obtained from $\mathcal X\to\mathbb A^1$ by a finite
number of base changes and blow-ups and $L\subset\mathcal X_0$ is a
curve lying in an irreducible component of the special fibre of
$\mathcal X$, then we will usually denote by the same symbol $L$ the
proper transform of $L\subset\mathcal X$ in $\mathcal Y$. Moreover,
if $S\subset\mathbb P^3$ is a surface of degree $n$ and
$C_t=f^*S\cap \mathcal X_t$ is the curve cut out on the fibre
$\mathcal X_t$ of $\mathcal X$ by the pull-back of $S$ via $f$, from
now on we will say that \textit{$C_t$ is cut out on $\mathcal X_t$
by $S$.} Furthermore, if $W\subset |\mathcal O_A(n)|$ and $V\subset
|\mathcal O_B(n)|$ are two projective subvarieties and $\mathcal
W\subset H^0(A,\mathcal O_A(n))$ and $\mathcal V\subset
H^0(B,\mathcal O_B(n))$ are the affine cones associated to $W$ and
$V$ respectively, then, by abusing notation, \textit{we shall denote
by} $V\times_{|\mathcal O_R(n)|}W$ \textit{the projective variety}
$\mathbb P(\mathcal W\times_{H^0(R,\mathcal O_R(n))}\mathcal
V)\subset |\mathcal O_{A\cup B}(n)|.$ Finally, we will indicate by
$\mathbb F_n$ the Hirzebruch surface $\mathbb F_n=\mathbb P(\mathcal
O_{\mathbb P^1}\oplus\mathcal O_{\mathbb P^1}(n)).$

\section{The case of one-nodal curves}\label{one-nodal curves}
In this section we consider the case $k=0$ and $\delta=1$,
determining all irreducible components of $\mathcal V_0$ with the
respective geometric multiplicity (see definition
\ref{geometricmultiplicity}). Notice that, in this case, the Severi
variety $\mathcal V_t=\mathcal V^{\mathcal S^t}_{nH_t,0,1}$ on the
general fibre is irreducible, it has the expected dimension, it is
smooth at every point corresponding to a one-nodal curve and,
finally, it coincides with the locus of $|nH_t|$ parametrizing
singular curves. Moreover, before we describe the special fibre
$\mathcal V_0$ of $\mathcal V_{nH,0,1}^\mathcal S$, notice that the
restriction of the linear system $|nH|$ to the special fibre
$\mathcal S_0=A\cup B$ of $\mathcal S$ is
$$
|\mathcal O_{A\cup B}(n)|\backsimeq \mathbb P(H^0(A,\mathcal
O_{A}(n))\times_{H^0(R,\mathcal O_{R}(n))}H^0(B,\mathcal O_{B}(n))),
$$
and, by Bertini Theorem, the general element of $|\mathcal O_{A\cup
B}(n)|$ is a curve smooth outside $R$ and with nodes at its
intersection points with $R$. By using notation introduced in the
previous section, we also denote by $W_A(d,n)$ and $W_{B}(d,n)$ the
divisors of $|\mathcal O_{A\cup B}(n)|$ defined as
$$W_A(d,n):= V^{A}_{nH,0,1}\times_{|\mathcal
O_{R}(n)|}|\mathcal O_{B}(n)|\,\,\textrm{and}\,\,W_{B}(d,n):=
|\mathcal O_{A}(n)|\times_{|\mathcal O_{R}(n)|} V^{B}_{nH,0,1}.
$$
Notice that the general element of $W_A(d,n)$ corresponds to a curve
cut out on $A\cup B\subset \mathbb P^3$ by a surface of degree $n$
tangent to $A$ at a general point. Moreover, if $(d,n)\neq (2,1)$,
$(d,n)\neq (3,1)$, $d\geq 2$ and $n\geq 1$ then the general element
of $W_A(d,n)$ corresponds to a curve $D=D_A\cup D_{B}$, where $D_A$
is an irreducible one-nodal curve intersecting transversally $R$ and
$D_{B}$ is a smooth curve on $B$, not intersecting $E_i$, for every
$i$, and intersecting transversally $R$ at the points $A\cap R$.
Similarly, for $d,\,n\geq 2$ the general element of $W_{B}(d,n)$
corresponds to a curve $D=D_A\cup D_{B}$ such that $D_A\subset A$ is
smooth, intersecting transversally $R$, and $D_{B}$ is a one-nodal
curve (which is irreducible for $(d,n)\neq (d,3)$), not intersecting
$E_i$ for every $i$, and such that $D_{B}\cap R=D_A\cap R$. In
particular, $W_A(d,n)$ and $W_{B}(d,n)$ are both irreducible
divisors of $|\mathcal O_{A\cup B}(n)|$. Finally, we denote by
$W_{E_i}(d,n)$, $i=1,\dots,\,d(d-1)$ and $T(d,n)$ the irreducible
divisors of $|\mathcal O_{A\cup B}(n)|$ defined as
$$
W_{E_i}(d,n)=\{[D]\in |\mathcal O_{A\cup B}(n)|\,\, \textrm{such
that}\,\,E_i\subset D\},
$$
where $E_1,\dots,\,E_{d(d-1)}$ are the exceptional divisors of $B$,
and
\begin{eqnarray*} T(d,n)& = &\{[D=D_A\cup
D_{B}]\in |\mathcal O_{A\cup B}(n)|,\,\,\textrm{such that}\,\,D_A\,\,
\textrm{and}\,\,D_{B}\,\,\textrm{are tangent to} \\
& & R\,\,\textrm{at a general point}\}.
\end{eqnarray*}
By using the terminology introduced at the end of the previous
section, notice that $W_{E_i}(d,n)$ parametrizes curves on $A\cup B$
cut out by surfaces of degree $n$ in $\mathbb P^3$ passing through
$p_i$, for every $i=1,\dots,d(d-1)$, whereas $T(d,n)$ parametrizes
curves on $A\cup B$ cut out by surfaces of degree $n$ of $\mathbb
P^3$ tangent to $R$ at its general points. In particular, if
$[D=D_A\cup D_{B}]\in T(d,n)$ is a general point, then $D_A$ and
$D_{B}$ are smooth, tangent to $R$ at only one point and they will
be transverse to $R$ outside this point, except for the case
$(d,n)\neq (2,1)$, when $D_A=D_B=R$.

\begin{lemma}\label{uno}
$W_A(d,n)$ is an irreducible component of $\mathcal V_0$ of
geometric multiplicity $1$, for every $(d,n)\neq (2,1)$. Similarly,
$W_{B}(d,n)$ is an irreducible component of $\mathcal V_0$ of
geometric multiplicity $1$, if $(d,n)\geq (2,2)$. Moreover, if $V$
is an irreducible component of $\mathcal V_0$ of geometric
multiplicity $1$ and whose general element corresponds to a curve
$D=D_A\cup D_{B}$ which does not contain any $E_i\subset B$, then
$V$ coincides with $W_A(d,n)$ or $W_{B}(d,n)$.
\end{lemma}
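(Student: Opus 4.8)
The plan is to establish the three assertions by a dimension count, a local smoothing analysis along $R$, and a classification according to where the node of a general one-nodal curve $D_t\subset\mathcal S_t$ specialises as $t\to 0$.

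First I would settle dimensions. By flatness of the family $|nH|\to\mathbb A^1$ the special fibre $|\mathcal O_{A\cup B}(n)|$ has the same dimension as $|\mathcal O_{\mathcal S_t}(n)|$, so, as recorded in Problem \ref{main}, every component of $\mathcal V_0$ has dimension $\dim|\mathcal O_{A\cup B}(n)|-1$. Using the presentation $|\mathcal O_{A\cup B}(n)|\backsimeq\mathbb P(H^0(A,\mathcal O_A(n))\times_{H^0(R,\mathcal O_R(n))}H^0(B,\mathcal O_B(n)))$ together with the fact that the hypersurface $V^A_{nH,0,1}\subset|\mathcal O_A(n)|$ has the same image in $|\mathcal O_R(n)|$ as $|\mathcal O_A(n)|$ itself, I get $\dim W_A(d,n)=\dim|\mathcal O_{A\cup B}(n)|-1$, and symmetrically for $W_B(d,n)$; the excluded values are precisely those for which the relevant Severi variety fails to be a hypersurface because the system carries no nodal curves. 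Since $W_A(d,n)$ and $W_B(d,n)$ are already known to be irreducible divisors, once I show $W_A,W_B\subseteq\mathcal V_0$ their maximal dimension will force them to be components.

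For the inclusion $W_A(d,n)\subseteq\mathcal V_0$ I would smooth a general $[D=D_A\cup D_B]$. Away from $R$ the total space $\mathcal S$ is smooth with smooth special fibre, so near the node $p\in A\setminus R$ of $D_A$ the family is analytically a product $A\times\mathbb A^1$ and the node persists on the nearby curves; along $R$ the standard model $xy=t$ smooths the transverse crossings of $D$ at $D\cap R$ via $\{z=0,\ xy=t\}$, so those points contribute no singularity to $D_t$. Hence a deformation of $D$ inside $|nH|$ keeping the cutting surface simply tangent to the moving fibre near $p$ yields, for $t\ne 0$, an irreducible one-nodal $D_t\in\mathcal V_t$ specialising to $D$, proving $[D]\in\mathcal V_0$; I expect the existence of this deformation to reduce to the surjectivity of $H^0(\mathbb P^3,\mathcal O(n))\to H^0(\mathcal S_t,\mathcal O(n))$ and to transversality of the tangency condition. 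To read off geometric multiplicity $1$ I would count the branches through $[D]$: the surface recovering $D_t$ must stay simply tangent at a point near the fixed $p$, which deforms uniquely and unramifiedly in $t$, so there is a single branch meeting $\mathcal V_t$ in one point. The same argument gives $W_B(d,n)\subseteq\mathcal V_0$ with geometric multiplicity $1$.

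Finally, for the converse I would take any component $V$ of geometric multiplicity $1$ whose general $[D=D_A\cup D_B]$ contains no $E_i$, write $D=\lim_{t\to 0}D_t$ with $D_t$ one-nodal, and follow the node to its limit $x_0\in\mathcal S_0$. If $x_0\in A\setminus R$ then $D_A$ is singular, hence $D_A\in V^A_{nH,0,1}$ and $V\subseteq W_A(d,n)$, so $V=W_A(d,n)$ by equality of dimensions; the case $x_0\in B\setminus R$ gives $V=W_B(d,n)$ symmetrically. The remaining case $x_0\in R$ is the crux: here the local model $xy=t$ forces the two branches of $D$ at $x_0$ to become tangent to $R$ (so $[D]\in T(d,n)$) or forces $D$ to absorb an exceptional curve $E_i$. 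In the first alternative the tangency admits two distinct smoothings to a node on $\mathcal S_t$, so the geometric multiplicity is at least $2$, contradicting the hypothesis; the second is excluded since $D$ contains no $E_i$. I expect this analysis of a node degenerating onto $R$, and in particular the count of smoothings of the resulting tangential configuration, to be the main obstacle, and it is exactly where the Ciliberto--Miranda limit linear system machinery is required.
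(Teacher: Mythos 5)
Your treatment of the first two assertions matches the paper's in spirit: the paper also produces, for a general point $p\in A$, a family of one-nodal curves with nodes along a section $\gamma$ through $p$, and its only technical content is exactly the step you leave as an expectation --- it blows up $\mathcal S$ along $\gamma$ and shows, via the exact sequence for $\mathcal O_{\mathcal X}(nH-2\Gamma)$ and the constancy of $\dim(Im(r_t))$, that $r_0$ is surjective, so that \emph{every} curve on $A\cup B$ with a node at $p$ is a limit of one-nodal curves on $\mathcal S_t$. Your dimension count is also consistent with the paper, which records beforehand that $W_A(d,n)$ and $W_B(d,n)$ are irreducible divisors of $|\mathcal O_{A\cup B}(n)|$.

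The gap is in the converse, precisely in the case you yourself call the crux, where the limit $x_0$ of the nodes lies on $R$. First, your proposed contradiction is structurally invalid: by Definition \ref{geometricmultiplicity}, to contradict geometric multiplicity $1$ you must show that \emph{no} local analytic section of $\mathcal V^{\mathcal S}_{nH,0,1}$ passes through $[D]$; exhibiting a tangential limit that ``admits two distinct smoothings'' produces a bisection, and a bisection does not preclude the existence of a section. Second, the facts you want to invoke --- that a node limiting onto $R$ forces $[D]\in T(d,n)$, and that such a point has multiplicity at least $2$ --- are the content of Case 3 of Theorem \ref{conclusionenodo} and of Lemma \ref{tacnodo}, and in the paper the ``no sections through the general point of $T(d,n)$'' half of Lemma \ref{tacnodo} is itself deduced \emph{from} Lemma \ref{uno}; your route is therefore circular as it stands. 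The observation you are missing, which is the whole point of the paper's proof, is that the case $x_0\in R$ cannot occur at all: a section of the Severi variety through $[D]$ makes the nodes of the curves $\mathcal C_t$ trace out a section of $\mathcal S|_U\to U$, and a section of $\mathcal S|_U\to U$ can never meet $R$, because $R$ is exactly the critical locus of the projection $\mathcal S\to\mathbb A^1$: in the local model $xy=t$, a section through a point of $R$ would satisfy $x(t)y(t)=t$ with $x(0)=y(0)=0$, which is impossible by comparing orders of vanishing at $t=0$. With this one-line remark the limit node is automatically a smooth point of $A\cup B$, the case analysis reduces to your first two cases, and no appeal to $T(d,n)$ or to the limit linear system machinery is needed.
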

\begin{proof}
Assume that $(d,n)\neq (2,1)$, let $\gamma$ be a section of
$\mathcal S\to\mathbb A^1$ passing through a general point $p$ of
$A$ and let $S\sim nH$ be a general divisor singular along $\gamma$.
Notice that such a divisor exists and, by generality, its general
fibre $S\cap \mathcal S_t$ is a $1$-nodal curve. Let $\mathcal
X=Bl_\gamma\mathcal S$ with exceptional divisor $\Gamma$. Then, if
$S^\prime$ is the proper transform of $S$ on $\mathcal X$, we have
that $S^\prime\sim nH-2\Gamma$. Now, denoting by $\mathcal X_t$ the
fibre over $t$ of $\mathcal X$, consider the exact sequence
$$
0\to\mathcal O_{\mathcal X}(nH-2\Gamma-\mathcal X_t)\to\mathcal
O_{\mathcal X}(nH-2\Gamma)\to\mathcal O_{\mathcal
X_t}(nH-2\Gamma)\to 0.
$$
Since the fibres of the family $\mathcal X\to\mathbb A^1$ are
linearly equivalent, we have that the dimension of the image of the
map $\xymatrix@1{H^0(\mathcal X,\,O_{\mathcal
X}(nH-2\Gamma))\ar[r]^{r_t}& H^0(\mathcal X_t,\,O_{\mathcal
X_t}(nH-2\Gamma))}$ doesn't depend on $t$. Moreover, since for $t$
general the map $r_t$ is surjective, we have that
$dim(Im(r_0))=dim(Im(r_t))=h^0(\mathcal X_t,\mathcal O_{\mathcal
X_t}(n H-2\Gamma))=h^0(\mathcal X_t,\mathcal O_{\mathcal X_t}(n
H))-3=h^0(\mathcal X_0,\mathcal O_{\mathcal X _0}(n H-2\Gamma))$.
So, every curve of $A\cup B$ with a node at the point $\gamma\cap A$
is a limit of a one-nodal curve in the linear system $|nH|$ on
$\mathcal S_t$. By the generality of the point $\gamma\cap A$ on $A$
we have that $W_A(d,n)$ is an irreducible component of $\mathcal
V_0$ of geometric multiplicity $1$. In order to prove the lemma for
$W_{B}(d,n)$ repeat the same argument as used for $W_{A}(d,n)$.

Now, let $V$ be an irreducible component of $\mathcal V_0$, whose
general element $[D]$ corresponds to a curve $D=D_A\cup D_{B}$ which
does not contain any $E_i\subset B$ and such that there exists an
analytic neighborhood $U$ of $0$ and an analytic section $\Delta$ of
the universal Severi variety $\mathcal V^\mathcal S_{nH,0,1}|_U$,
passing through $[D]$. Then, the singular locus of the family of
curves $\mathcal C\to\Delta$, naturally parametrized by $\Delta$, is
a section of $\mathcal S|_U\to\Delta$ and it must intersect the
special fibre at a smooth point $q$, i.e. at a point $q\notin
R=A\cap B$. If $q\in A$ then $D_A$ is singular, $[D]\in W_A(d,n)$
and $V=W_A(d,n)$. Otherwise $V=W_{B}(d,n)$.
\end{proof}
\begin{remark}
By the previous lemma, the other possible irreducible components of
$\mathcal V_0$ "are produced" by degenerations of the general
element $[D_t]$ of $\mathcal V_t$ such that as $D_t$ goes to $A\cup
B$, the node of $D_t$ specializes to a point of the singular locus
$R$ of $A\cup B$.
\end{remark}
\begin{lemma}\label{tacnodo}
If $(d,n)\neq (2,1)$, then $T(d,n)$ is an irreducible component of
$\mathcal V_0$ of geometric multiplicity $2$.
\end{lemma}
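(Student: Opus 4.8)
The plan is to localize everything at a general point $p\in R$ at which the tacnode occurs, following the remark after Lemma \ref{uno}: the only way a $1$-nodal curve $D_t$ on $\mathcal S_t$ can degenerate to a curve of $T(d,n)$ is for its node to tend to a point of $R$. So I fix a general $[D=D_A\cup D_{B}]\in T(d,n)$, with $D_A,D_{B}$ tangent to $R$ at a single general point $p$ and transverse to $R$ elsewhere, and I work in local analytic coordinates $(x,y,z)$ on the smooth threefold $\mathcal S$ for which $A=\{x=0\}$, $B=\{y=0\}$, $R=\{x=y=0\}$, and the family map is $t=xy$. A surface $S\sim nH$ cutting out $D$ is then $\{F=0\}$ locally, and the tangency of $S$ to $R$ forces the reduction to the model $F=x+y-z^2+az+b$, where $(a,b)$ are the two essential deformation parameters. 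A direct discriminant computation on each component shows that $D_A\colon y=z^2-az-b$ and $D_{B}\colon x=z^2-az-b$ are each tangent to $R$ at $p$ exactly when $a^2+4b=0$; hence, setting $B:=b+\tfrac{a^2}{4}$, the locus $T(d,n)$ is locally $\{B=0\}$, a single (codimension-one) condition, consistent with $T(d,n)$ being the irreducible divisor described before the statement.

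Next I would carry out the node count on the nearby fibres. Restricting to $\mathcal S_t=\{xy=t\}$ with coordinates $(x,z)$ and $y=t/x$, the curve is cut (away from $x=0$) by $\psi:=x^2+(b+az-z^2)x+t$. Solving $\psi=\psi_x=\psi_z=0$ with $x\neq0$ gives $z=a/2$, $x=-B/2$, and the single relation
\[
B^2=4t .
\]
Thus $D_t$ acquires a node near $p$ precisely when $B=\pm 2\sqrt t$, and as $t\to 0$ one has $B\to 0$, recovering $T(d,n)$; this already shows $T(d,n)\subset\mathcal V_0$. Moreover, near a general element of $T(d,n)$ the Severi family of $1$-nodal curves is the smooth double cover $\{B^2=4t\}$ of the $t$-line, branched over $t=0$. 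The monodromy $t\mapsto e^{2\pi i}t$ exchanges the two branches $B=\pm2\sqrt t$, so no analytic section passes through $[D]$, whereas the base change $t=s^2$ resolves the family into the single smooth branch $B=2s$, an honest $2$-multisection. By Definition \ref{geometricmultiplicity} this gives geometric multiplicity exactly $2$, provided one checks (as the local analysis does, the node being forced onto the tacnode) that these are the only branches of $1$-nodal curves limiting to $[D]$.

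Finally, to see that $T(d,n)$ is a whole component of $\mathcal V_0$, I would invoke the dimension count: $T(d,n)$ is an irreducible divisor in $|\mathcal O_{A\cup B}(n)|$, so $\dim T(d,n)=\dim\mathcal V_t=\dim\mathcal V_0$, and an irreducible subvariety of $\mathcal V_0$ of the right dimension that is not contained in $W_A(d,n)$ or $W_{B}(d,n)$ (its general node sits on $R$) is necessarily a component. The case $(d,n)=(2,1)$ is excluded exactly because there $D_A=D_{B}=R$ and the model degenerates. The main obstacle I expect is the \emph{globalization}: one must verify that the two local deformation parameters $(a,b)$ are genuinely realized by global degree-$n$ surfaces, i.e. that the restriction of $|\mathcal O_{\mathcal S}(nH)|$ surjects onto the relevant jets of surfaces along $R$ at $p$; this regularity is what makes the local $\{B^2=4t\}$ picture faithfully describe the Severi variety near $[D]$, and it is precisely where the hypothesis $(d,n)\neq(2,1)$ and the generality of $p\in R$ enter.
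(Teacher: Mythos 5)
Your local computation is correct as far as it goes, and it is a genuinely different route from the paper's: the paper never writes a local equation for the Severi variety, but instead makes a degree-$2$ base change totally ramified over $0$, performs semistable reduction (special fibre $A\cup\mathcal E\cup B$), takes a \emph{general} divisor $\mathcal C\sim nH$ nodal along a section through a general point of a fibre $F$ of $\mathcal E$, and determines the limit curve by blow-ups and intersection-theoretic bookkeeping ($\mathcal C'\cdot F=-2$, the triple point formula, minimality of $\operatorname{mult}_F(\mathcal C')$), concluding that $\mathcal C|_A$ and $\mathcal C|_B$ are tangent to $R$. Nevertheless, your proof has a genuine gap, and it sits exactly at the step you yourself call ``the main obstacle'': you never prove that the global system $|\mathcal O_{\mathcal S}(nH)|$ (equivalently $\mathcal H_n$ near $[D]$) surjects onto the local parameters $(a,b)$, i.e.\ onto the relevant jets along $R$ at $p$, nor that the generic such perturbation yields a curve with no singularities besides the one node near $p$. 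Without this, nothing follows: you do not know that the branches $B=\pm2\sqrt t$ are realized by actual curves of $\mathcal V_t$, so the inclusion $T(d,n)\subset\mathcal V_0$ is not established --- and with it your dimension-count argument for componentness collapses, since it presupposes that inclusion --- nor do you know that the resulting bisection meets $\mathcal V_t$ at \emph{general} points, as Definition \ref{geometricmultiplicity} requires. That surjectivity is precisely the substantive content of the paper's proof: the exact sequence \eqref{special} together with the semicontinuity argument giving $\dim\operatorname{Im}(r_0)=h^0(A\cup B,\mathcal O_{A\cup B}(n))-3$, and the identification of the limit divisors with curves cut out by surfaces through $p$ tangent to the plane $H_q$, whose generality as $q$ varies in $F$ yields generality in $T(d,n)$. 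It is also where $(d,n)\neq(2,1)$ genuinely enters: for $(2,1)$ the class $(nH-2\Gamma)|_{\mathcal E'}$ is not effective, the component $\mathcal E'$ is forced into the limit, and the picture degenerates. Flagging the obstacle is not the same as closing it.

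There is a second, smaller gap in the lower bound. Your monodromy argument on $\{B^2=4t\}$ only excludes analytic sections of $\mathcal V^{\mathcal S}_{nH,0,1}$ whose nodes tend to the tacnode point $p$; it says nothing about a hypothetical section through $[D]$ whose nodes tend to one of the other singular points of $D$, namely the nodes of $D_A\cup D_B$ at the transverse intersections with $R$ (your parenthetical claim that the local analysis ``forces the node onto the tacnode'' is not something a computation localized at $p$ can show). The paper closes this by invoking Lemma \ref{uno}: the nodes of a family parametrized by an honest section trace an analytic section of $\mathcal S|_U\to U$, and since $\mathcal S$ has equation $xy=t$ along $R$, such a section can never meet $R$ (one of $x(t),y(t)$ must be a unit); hence its limit would be a singular point of $D$ off $R$, which the general element of $T(d,n)$ does not possess. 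You should either cite that argument or reproduce it; the monodromy of the single local branch is not enough.
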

\begin{proof}
Let $p\in R=A\cup B=\mathcal S_0$ be a point such that $p\notin
E_i$, for every $i$. We consider a double covering of our family
$\mathcal S\to\mathbb A^1$ totally ramified at its special fibre
\begin{displaymath}
\xymatrix{ \mathcal S^\prime \ar[d]\ar[r]^h &
\mathcal S\ar[d]\\
\mathbb A^1\ar[r]^{\nu_2}&\mathbb A^1}
\end{displaymath}
Now, the special fibre $\mathcal S^\prime_0$ of $\mathcal
S^\prime\to \mathbb A^1$ is isomorphic to the special fibre
$\mathcal S_0$ of $\mathcal S$ and we still denote it by $A\cup B$.
But, if $xy=t$ is the local equation of $\mathcal S$ at $p$, the
local equation of $\mathcal S^\prime$ at $p$ will be $xy=t^2$. In
particular, $\mathcal S^\prime$ is singular along the singular locus
$R=A\cap B$ of the special fibre. If we blow-up the ambient space
along $R$, the proper transform $\mathcal X$ of $\mathcal S^\prime$
is smooth, the general fibre of $\mathcal X$ is isomorphic to the
general fibre of $\mathcal S^\prime$ while the special fibre of
$\mathcal X$ is $\mathcal X_0=A\cup \mathcal E\cup B$, where
$\mathcal E\backsimeq\mathbb F_{d-1}$ is the intersection of
$\mathcal X$ with the exceptional divisor.
The inverse image of $p$ to $\mathcal X$ is a fibre of $\mathcal E$
which we denote by $F$. Now, if $U$ is an analytic neighborhood of
$0\in \mathbb A^1$ small enough, let $\gamma$ be a section of the
family $\mathcal X|_U$ and let $\mathcal C\subset\mathcal X|_U$ be a
general divisor with nodal singularities along $\gamma$ and linearly
equivalent to $nH$, where we still denote by $H$ the pull-back to
$\mathcal X$  of the hyperplane divisor. Notice that such a divisor
$\mathcal C$ exists for every $n\geq 1$ and $d\geq 2$, because, for
every $n\geq 1$ and $d\geq 2$, on the general fibre $\mathcal X_t$
of $\mathcal X$, there exist curves, linearly equivalent to
$nH|_{\mathcal X_t} =n H_t$ with only a node at a general point
$q_t$ and no further singularities. We want to understand the kind
of singularities of the special fibre $\mathcal C_0$ of $\mathcal
C$. To this aim, let $\pi :\mathcal Y\to\mathcal X|_{U}$ be the
blowing-up of $\mathcal X|_{U}$ along $\gamma$, with exceptional
divisor $\Gamma$. The special fibre of $\mathcal Y$ is now given by
$A\cup \mathcal E^\prime\cup B$, where $\mathcal E^\prime$ is the
blowing-up of $\mathcal E$ at $\gamma\cap F=\gamma\cap \mathcal E$,
with new exceptional divisor $E_0=\Gamma\cap \mathcal E^\prime$. In
particular, $F^2_{\mathcal E^\prime}=-1$ and, if $\mathcal C^\prime$
is the proper transform of $\mathcal C$ on $\mathcal Y$, then
$\mathcal C^\prime\sim nH-2\Gamma$.

Now, if $(d,n)=(2,1)$, denoting by $F_{\mathcal E^\prime}$ the
pull-back to $\mathcal E^\prime$ of the linearly equivalence class
of the fibre of $\mathcal E$, we have that the divisor
$(nH-2\Gamma)|_{\mathcal E^\prime}\sim F_{\mathcal E^\prime}-2E_0$
is not effective. Since $\mathcal C^\prime$ is effective we deduce
that $\mathcal E^\prime\subset \mathcal C^\prime$ and, in
particular, $R_1\subset\mathcal C^\prime|_A$ and $R_2\subset
C^\prime|_B$. So the point $[C]\in |\mathcal O_{A\cup B}(n)|$,
corresponding to the curve $C_A\cup C_B$, where $C_A=\mathcal C|_A$
and $C_B=\mathcal C|_B$, belongs to the variety $T(2,1)$ and cannot
be general in any irreducible component of $\mathcal V_0$.

If $(d,n)\neq (2,1)$, then the divisor $(nH-2\Gamma)|_{\mathcal
E^\prime}\sim (n(d-1)-2)F_{\mathcal E^\prime}+2F$. In particular,
$F$ is contained in the divisor $(nH-2\Gamma)|_{\mathcal E^\prime}$
with multiplicity at least $2$ and $\mathcal C^\prime\cdot F=-2$. We
want to compute the multiplicity $\alpha$ of $\mathcal C^\prime$
along $F$. Let then $\pi^\prime :\mathcal Z\to\mathcal Y$ be the
blowing-up of $\mathcal Y$ along $F$. The special fibre of $\mathcal
Z$ is $A^\prime\cup \mathcal E^\prime\cup B^{\prime}\cup\Theta$,
where $\Theta$ is the new exceptional divisor and $A^\prime$ and
$B^{\prime}$ are the blowups of $A$ and $B$ at the point $F\cap A$
and $F\cap B$, with exceptional divisor $\Theta\cap A^\prime$ and
$\Theta\cap B^{\prime}$ respectively. By the triple point formula
$$
\Theta\cdot \mathcal E^\prime\cdot(A^\prime+ \mathcal E^\prime+
B^{\prime} +\Theta)=0
$$
we deduce that $(F^2)_{\Theta}=-(F^2)_{\mathcal E^\prime}-2=-1$ and,
in particular, $\Theta\simeq\mathbb F_1$ and $F=\mathcal
E^\prime\cap \Theta$ is the exceptional divisor of $\Theta$.
Moreover, denoting by $\mathcal C^{\prime\prime}$ the pullback to
$\mathcal Z$ of $\mathcal C^\prime$ and by $F_\Theta$ the linear
equivalence class of the fibre of $\Theta$ , we have that
$$\mathcal C^{\prime\prime} |_{\Theta}\sim (2\alpha-2)F_\Theta+\alpha
F.$$ Since $\mathcal C^{\prime\prime} |_{\Theta}$ must be an
effective divisor, we have that $2\alpha-2\geq 0$, i.e. $\alpha\geq
1$. Now, for every $\alpha\geq 1$, we have that the image into
$\mathcal X|_U$ of a divisor in $\mathcal Z$, linearly equivalent to
$nH-2\Gamma^\prime-\alpha\Theta$, is a divisor linearly equivalent
to $nH$ and with double singularities along $\gamma$. Since we have
taken a general divisor $\mathcal C$ in $\mathcal X|_U$, with these
properties, we may assume that $\alpha$ is the minimum integer in
order that $\mathcal C^{\prime\prime}|_\Theta$ is effective, i.e.
$\alpha=mult_F(\mathcal C^\prime)=1$ and $\mathcal
C^{\prime\prime}|_\Theta =F=C^{\prime\prime}|_{\mathcal E^\prime}$.
This implies that $\mathcal C^{\prime\prime}|_{A^\prime}$ must
intersect the exceptional divisor $\Theta \cap A^\prime$ with
multiplicity one at the points $F\cap A$. Thus, recontracting
$\Theta$ and going back to $\mathcal C^\prime\subset\mathcal Y$, we
have that $\mathcal C^\prime|_{A}$ must pass through the point
$F\cap R_1$ and it must be smooth and tangent to $R_1$ at this
point. At the same way, $\mathcal C^\prime|_{B}$ passes through
$F\cap R_2$ with multiplicity $1$ and it is tangent to $R_2$ at this
point.

Now, let $\mathcal D$ be the image of $\mathcal C$ into $\mathcal
S|_{\nu_2(U)}$. If $t$ is a general point of $\mathbb A^1$ and
$\{t_1, \,t_2\}=\nu_2^{-1}(t)$, then the fibre of $\mathcal D$ over
$t$ is $\mathcal D_t=\mathcal C_{t_1}\cup\mathcal C_{t_2}$ and, in
particular, it is the union of two one-nodal curves in the linear
system $|nH_t|$. Whereas, the special fibre $\mathcal D_0$ of
$\mathcal D$ is the curve, counted with multiplicity $2$, image of
$\mathcal C_0$ under the contraction of $\mathcal E$. Thus $\mathcal
D_0=2\mathcal C_A\cup 2\mathcal C_{B}$, where $\mathcal C_A=\mathcal
C\cap A$ and $\mathcal C_{B}=\mathcal C\cap B$. \textit{From what we
proved before, the point $x=[\mathcal C_A\cup \mathcal C_{B}]\in
T(d,n)\cap\mathcal V_0$ and the curve in $\mathcal H_n$
corresponding to $\mathcal D$ is a local bisection of $\mathcal
V_{nH,0,1}^{\mathcal S}$ passing through $[D]$ and intersecting the
general fibre at $2$ general points.}

\textit{Now we want to prove that the point $x$ is general in
$T(d,n)$}. To this aim let $\mathcal Z^1$ be the blowing-up of
$\mathcal Z$ along $F=\Theta\cap \mathcal E^\prime$. Now the special
fibre of $\mathcal Z^1$ is $\mathcal Z_0^1=A^{\prime\prime}\cup
\mathcal E^\prime\cup B^{\prime\prime}\cup \Theta \cup 2\Theta_1$,
where $\Theta_1\simeq\mathbb F_0$ is the new exceptional divisor and
$A^{\prime\prime}$ and $B^{\prime\prime}$ are the proper transforms
of $A^{\prime}$ and $B^{\prime}$ respectively. Moreover, denoting by
$\mathcal C^{\prime\prime\prime}$ the proper transform of $\mathcal
C^{\prime\prime}$ on $\mathcal Z_1$, we have that
$$
\mathcal C^{\prime\prime\prime}\sim
nH-2\Gamma^{\prime\prime}-\Theta-2\Theta_1,$$ where
$\Gamma^{\prime\prime}$ is the proper transform of $\Gamma^\prime$.
Now, by denoting by $D$ the divisor
$D=\Theta+2\Theta_1+2\Gamma^{\prime\prime}$, we consider the exact
sequence
\begin{equation}\label{special}
0\to\mathcal O_{\mathcal Z^1}(-\mathcal Z^1_t+nH-D)\to\mathcal O_{\mathcal
Z^1}(n H-D)\to\mathcal O_{\mathcal Z^1_t}(nH-D)\to 0
\end{equation}
where $\mathcal Z^1_t$ is any fibre of $\mathcal Z^1$. By arguing as
in the previous lemma, we see that the image of the map
$r_0:\,H^0(\mathcal Z^1, \mathcal O_{\mathcal Z^1}(n H-D))\to
H^0(\mathcal Z^1_0,\mathcal O_{\mathcal Z^1_0}(n H-D))$
is equal to $h^0(\mathcal Z^1_,\mathcal O_{\mathcal Z^1_0}(n
H))-3=h^0(A\cup B,\mathcal O_{A\cup B}(n))-3$. Hence, if we denote
by $\mathcal T(n)_p\subset |\mathcal O_{\mathbb P^3}(nH)|$ the locus
of surfaces of $\mathbb P^3$, passing through $p$ and tangent to
$R=S^{d-1}\cap\pi$ at $p$, then all divisors in the linear series
$|\mathcal O_{\mathcal Z^1_0}(nH-D)|$ are cut out on $\mathcal
Z^1_0$ by surfaces parametrized by a divisor in $\mathcal T(n)_p$.
We denote this divisor by $\mathcal D_q$ where $q=\gamma\cap F$. How
to characterize $\mathcal D_q$? In order to answer this question we
contract $\Theta_1$, $\Theta$ and finally $\Gamma$, and we come back
to the family of surfaces $\mathcal X|_U\to U$. Now, the fibre $F$
of $ \mathcal E$ can be identified with a double cover of the fibre
over $p$ of the projectivized normal bundle to $R$ in $\mathbb P^3$.
In other words, every point $r$ of $F$ corresponds to a plane $H_r$
in  $\mathbb P^3$ containing the line $T_pR$ and there are exactly
two points of $F$ corresponding to the same plane. From the
hypothesis that the singular locus $\gamma$ of $\mathcal C$
intersects the fibre $F$ at $q$, we deduce that the image curve of
$\mathcal C_0$ in $\mathbb P^3$ is cut out on $S^{d-1}\cup \pi$ from
a surface of degree $n$ passing through $p$ and tangent at $p$ to
the hyperplane $H_q$ corresponding to $q\in F$. The locus of
surfaces with these properties is the divisor $D_q\subset\mathcal
T(n)$. By the generality of $q$ in $F$, we conclude that $T(d,n)$ is
an irreducible component of $\mathcal V_0$. To see that there are
not local sections of $\mathcal V_{nH,0,1}^{\mathcal S}$ passing
though the general point $[D]$ of $T(d,n)$ use the previous lemma.
\end{proof}
Now we want to see which is the limit curve on $S^{d-1}\cup \pi$ if
we consider a degeneration of a general one nodal curve on the
general fibre of the pencil $\mathcal F$, specializing to
$S^{d-1}\cup \pi$ in such a way that the node comes to a base point
of the pencil. Computations we are going to do are partially
contained in theorem 2.2 of \cite{chenversal}.
\begin{remark}\label{nodopuntobase}
Let $p_1 \in\mathbb P^3$ be the base point of our pencil $\mathcal
F$ corresponding to the exceptional divisor $E_1\subset B$. Let
$\gamma\subset\mathbb P^3$ be a general curve passing through $p_1$
and, in particular, transversally intersecting $S^{d-1}$ at $l(d-1)$
different points $p_1,\,q_2,\dots,\,q_{l(d-1)}$, where $l$ is the
degree of $\gamma$. If $\mathcal A\subset\mathbb A^3\times\mathbb
A^1$ is the total space of $\mathcal F$ and $g:\mathcal A\to\mathbb
A^3$ is the natural map, then, locally working at $(p_1,0)$, we may
suppose that $(p_1,0)=(0,0,0,0)$, that $\mathcal A$ has equation
$$xy-tz=0$$ and, finally, that $g^*\gamma$ has equations
$$\left\{
\begin{array}{l}
xy-tz=0\\
x-z=0\\
y-z=0.
\end{array}\right.$$
Notice that $(p_1,0)$ is a rational double point of $\mathcal A$ and
that the pull-back curve $g^*\gamma\subset\mathcal A$ has two
irreducible components passing through $(p_1,0)$,
\begin{eqnarray}
 \gamma_1:\left\{\begin{array}{l}
z=0\\
x=0\\
y=0
\end{array}\right. & \textrm{and} &
\gamma_2:\left\{
\begin{array}{l}
z=t\\
x=t\\
y=t,
\end{array}\right.
\end{eqnarray}
transversally intersecting at $(0,0,0,0)$ and where
$\gamma_1=g^*(p_1)$ is nothing else but the section of $\mathcal A$
corresponding to the inverse image of the point $p_1\in\mathbb P^3$
(which is a base point of $\mathcal F$). If we denote by
$\pi:\tilde{\mathcal A}\to\mathcal A$ the restriction to the proper
transform of $\mathcal A$ of the blowing-up of $\mathbb
A^3\times\mathbb A^1$ at $(p_1,0)$, then the special fibre of
$\tilde{\mathcal A}$ is given by $\tilde{\mathcal
A}_0=\tilde{A}\cup\Theta_1\cup \tilde{B}$, where $\tilde A$ is the
blowing up of $A=S^{d-1}$ at $p_1$, $\tilde B$ is the blowing up of
$\pi$ at $p_1$ and $\Theta_1$ is the quadric cut out on
$\tilde{\mathcal A}$ by the exceptional divisor. By identifying
$\Theta_1$ with the "projectivized" tangent cone of $\mathcal A$ at
$(0,0,0,0)$ we see that the curves $\pi^*\gamma_1$ and
$\pi^*\gamma_2$ intersect $\tilde{\mathcal A}_0$ at two different
points $q_1$ and $q_2$, corresponding to the tangent directions of
$\gamma_1$ and $\gamma_2$ at $(0,0,0,0)$ respectively. Moreover the
tangent direction to $\gamma_2$ at $(0,0,0,0)$ is determined by the
tangent direction to $\gamma$ at $p_1$ in $\mathbb P^3$. In
particular, if we set $E_1=\Theta_1\cap \tilde B$ and $E_1^\prime
=\Theta_1\cap\tilde A$, then $q_1$ and $q_2$ don't lie on $E_1\cup
E_1^\prime$ because $\gamma_1$ and $\gamma_2$ are not tangent to
$\pi$ or $S^{d-1}$ at $(0,0,0,0)$. Finally, notice that, the tangent
space to $\mathcal A\subset\mathbb P^3\times A^3\times \mathbb A^1$
at every point $(0,0,0,t)$, with $t\neq 0$, is given by the
hyperplane $H_z$ of equation $z=0$, and the curve $\gamma_1$ is
contained in $H_z$. Hence, the two lines $H_1$ and $H_1^\prime$ of
$\Theta_1$ with $H_1\in |E_1|$ and $H_1^\prime\in| E_1^\prime |$,
intersecting at $q_1$, are nothing else but the projectivization of
the intersection of $H_z$ with the tangent cone to $\mathcal A$ at
$(0,0,0,0)$.
\end{remark}
\begin{lemma}\label{lemmaW_E_i}
$W_{E_i}(d,n)$ is an irreducible component of $\mathcal V_0$ of
geometric multiplicity $1$, for every $i=1,\dots,\,d(d-1)$.
\end{lemma}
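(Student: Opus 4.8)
The plan is to follow the strategy of Lemma \ref{uno}, using the local analysis of Remark \ref{nodopuntobase} to treat the degeneration in which the node of a one-nodal curve specializes to a base point of the pencil $\mathcal F$. By the symmetry of the construction it suffices to treat the case $i=1$. As in the proof of Lemma \ref{uno}, three points must be established: that the general element of $W_{E_1}(d,n)$ is a limit of one-nodal curves on $\mathcal S_t$, so that $W_{E_1}(d,n)\subset\mathcal V_0$; that it has dimension $\dim\mathcal V_t$, so that it is an irreducible component; and that its geometric multiplicity equals $1$.

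First I would produce the degenerating family. Let $\gamma\subset\mathbb P^3$ be a general curve through $p_1$, as in Remark \ref{nodopuntobase}, and let $\gamma_2$ be the non-constant branch of $g^*\gamma$, that is, the branch carrying the moving node; after the blow-up $\tilde{\mathcal A}\to\mathcal A$ and the contraction recovering $\mathcal S_0=A\cup B$, the section $\gamma_2$ meets the special fibre at a general point $q$ of $E_1\subset B$. Exactly as in Lemma \ref{uno}, I would then choose a general divisor $S\sim nH$ on $\mathcal S$ singular along $\gamma_2$, so that for general $t$ the fibre $S\cap\mathcal S_t$ is an irreducible one-nodal curve whose node $\gamma_2\cap\mathcal S_t$ tends to $q$ as $t\to 0$.

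The key point is that the limit lies in $W_{E_1}(d,n)$. Since $E_1$ is a $(-1)$-curve of the blow-up $B\to\pi$ and $H|_B$ is the pull-back of the hyperplane class, one has $H\cdot E_1=0$ and hence $nH\cdot E_1=0$; therefore any effective divisor in $|\mathcal O_B(n)|$ meeting $E_1$ must contain it. The special fibre $S\cap\mathcal S_0$ acquires its node at $q\in E_1$, so it meets $E_1$ and thus contains $E_1$, i.e. $[S\cap\mathcal S_0]\in W_{E_1}(d,n)$. To see that such limits sweep out the whole irreducible divisor $W_{E_1}(d,n)$, I would blow up $\gamma_2$, so that the proper transform of $S$ is linearly equivalent to $nH-2\Gamma$, and run the cohomological argument of Lemma \ref{uno}: the exact sequence for $\mathcal O(nH-2\Gamma)$ shows that the image of the restriction map $r_0$ has the expected dimension, so every curve on $A\cup B$ with a node at $q$ is a limit of one-nodal curves. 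Letting $q$ vary over the one-dimensional $E_1$ adds one to the dimension of the family nodal at a fixed $q$, recovering $\dim W_{E_1}(d,n)=\dim\mathcal V_t$, whence $W_{E_1}(d,n)$ is an irreducible component of $\mathcal V_0$. Finally, since the node travels along the single branch $\gamma_2$ and no base change in $t$ is needed (in contrast with Lemma \ref{tacnodo}), the family $S$ is a genuine local section of $\mathcal V^{\mathcal S}_{nH,0,1}$ through the general element of $W_{E_1}(d,n)$, so the geometric multiplicity is $1$.

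The main obstacle I anticipate is to make rigorous the assertion that the node specializes onto $E_1$ at a general point, rather than being absorbed into the double curve $R=A\cap B$ or remaining on the exceptional quadric $\Theta_1$; this is exactly what the blow-up computation of Remark \ref{nodopuntobase} controls, through the positions of $q_1,q_2$ and of the rulings $H_1\in|E_1|$ and $H_1^\prime\in|E_1^\prime|$ of $\Theta_1$, none of $q_1,q_2$ lying on $E_1\cup E_1^\prime$. A secondary technical point is the existence of a divisor $S\sim nH$ singular along $\gamma_2$ with irreducible one-nodal general fibre, which should follow from the generality of $\gamma$ together with the surjectivity of the restriction map $r_t$ for general $t$.
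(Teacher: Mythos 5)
Your construction of the degenerating family is consistent with the paper's (Remark \ref{nodopuntobase} does put the limit of the node at a general point $q$ of $E_1$), and your observation that $nH\cdot E_1=0$ forces any member of $|\mathcal O_{A\cup B}(n)|$ meeting $E_1$ to contain it is correct; so the limits you construct do lie in $W_{E_1}(d,n)$. The gap is in the step where you claim that the exact-sequence argument of Lemma \ref{uno} shows that \emph{every} curve on $A\cup B$ with a node at $q$ is a limit. That argument gives $\dim Im(r_0)=\dim Im(r_t)=h^0(\mathcal O_{\mathcal S_t}(n))-3$, since a node at a point of the smooth fibre imposes three conditions. But on the special fibre a node at $q\in E_1$ imposes only \emph{two} conditions on $|\mathcal O_{A\cup B}(n)|$, precisely because of the fact you yourself invoked: any member through $q$ must contain $E_1$, say $D_B=E_1+D_B'$, and then $mult_q(D_B)\geq 2$ amounts to the single further condition $q\in D_B'$. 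Hence
$$
h^0(\mathcal X_0,\mathcal O_{\mathcal X_0}(nH-2\Gamma))=h^0(\mathcal O_{A\cup B}(n))-2>h^0(\mathcal O_{A\cup B}(n))-3=\dim Im(r_0),
$$
so $r_0$ is \emph{not} surjective: for a fixed section $\gamma_2$ the limits form a codimension-one subfamily of the curves nodal at $q$. (As the paper's proof shows, this subfamily consists of the curves cut out by degree-$n$ surfaces through $p_1$ tangent at $p_1$ to one \emph{fixed} plane $H_{q_2}$; the datum $q_2$, a point of the exceptional quadric $\Theta_1$, records the full tangent direction of $\gamma$ at $p_1$, of which your $q\in E_1$ retains only the ruling.) Your dimension count then collapses: varying $q$ in the one-dimensional $E_1$ sweeps out a family of dimension at most $\dim|\mathcal O_{A\cup B}(n)|-2$, one less than $\dim W_{E_1}(d,n)$, so you have not shown that the \emph{general} element of $W_{E_1}(d,n)$ is a limit of one-nodal curves, hence neither that $W_{E_1}(d,n)$ is a component of $\mathcal V_0$, nor that a section of $\mathcal V^{\mathcal S}_{nH,0,1}$ passes through its general point.

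The missing parameter is exactly what you contract away and the paper keeps. The paper works on $\tilde{\mathcal S}$, the blow-up of $\mathcal S$ along $E_1$, whose special fibre $A^\prime\cup\Theta_1\cup B$ contains the quadric $\Theta_1$: there the limit is governed by the point $q_2\in\Theta_1$, a \emph{two}-parameter datum. One first shows, via $S\sim nH-\Theta_1$ and $S|_{\Theta_1}=H_2\cup H_2^\prime$, that the limit curves are those cut by surfaces through $p_1$ tangent to $H_{q_2}$ (a codimension-three condition, matching $\dim Im(r_0)$), and then one sweeps out all of $W_{E_1}(d,n)$ by letting $q_2$ vary in the two-dimensional quadric, not just $q$ in the one-dimensional $E_1$. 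If you insist on staying on $\mathcal S$, you would need, for each fixed $q$, to vary the tangent direction of $\gamma$ at $p_1$ along the ruling of the tangent cone lying over $q$, and prove that the union of the corresponding images $Im(r_0)$ fills up the whole family of curves nodal at $q$; this identification of how $Im(r_0)$ depends on $\gamma_2$ is the substance of the paper's proof and is absent from your proposal.
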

\begin{proof}
We will prove the lemma for $i=1$. We use the notation introduced in
Remark \ref{nodopuntobase}. We blow-up the ambient space at their
singular points and we contract all the exceptional components of
the proper transform of $\tilde{\mathcal A} $, different from
$\Theta_1$, on $\tilde B$. We denote by $\tilde {\mathcal
S}\to\mathbb A^1$ the family of surfaces obtained in this way.
$\tilde S$ is nothing else but the blowing-up of $\mathcal S$ along
$E_1$. We denote by $A^\prime\cup \Theta_1\cup B$ its special fibre.
Now, let $S\subset\tilde{\mathcal S}$ be a general divisor, linearly
equivalent to $nH-\alpha \Theta_1$, with nodal singularities along
$\gamma_2$. We ask which is the minimum $\alpha$ such that
$S|_{\Theta_1}$ is effective. Now, $ S_{|\Theta_1}\sim -\alpha
{\Theta_1}_{|\Theta_1}\sim\alpha E_1+\alpha E_1^\prime$. Thus the
minimum $\alpha$ such that $S|_{\Theta_1}$ is effective is
$\alpha=1$. Then take $S\sim nH-\Theta_1$. From the fact that $S$ is
nodal along $\gamma_2$, we have that $S|_{\Theta_1}$ has at least a
node at $\gamma_2\cap\Theta_1$. But $S|_{\Theta_1}\sim
E_1+E_1^\prime$ and hence $S|_{\Theta_1}=H_2\cup H^\prime_2$, where
$H_{2}\in |E_1|$ and $H_2^\prime\in |E_1^\prime|$ are the two lines
passing through $q_2$. In particular, we find that the curve $S\cap
(A^\prime\cup \Theta_1\cup B)$ is cut out from a surface of $\mathbb
P^3$ passing through $p_1$ and tangent at $p_1$ to a plane
$H_{q_2}$, determined by the point $q_2$. Moreover, if we blow-up
$\tilde{\mathcal S }$ along $\gamma_2$ and we denote by $\Gamma_2$
the exceptional divisor, by $\mathcal Z$ the resulting family of
surfaces and by $\Theta_1^\prime$ the proper transform of
$\Theta_1$, then, by arguing as in the last part of the proof of
lemma \ref{uno}, you can prove that the image of the restriction map
$$r_0:H^0(\mathcal O_{\mathcal Z}(nH-2\Gamma_2-\Theta_{1}^\prime))\to
H^0(\mathcal O_{\tilde A\cup \Theta_1^\prime\cup
B}(nH-2\Gamma_2-\Theta_1^\prime))$$ has dimension $ dim |\mathcal
O_{\tilde{\mathcal S_t}}(nH)|-3, $ and, in particular, $r_0$ is
surjective. So, the general curve on $\tilde{\mathcal S}_0=A\cup B$
cut out by a surface of $\mathbb P^3$ passing through $p_1$ and
tangent at $p_1$ to a plane $H_{q_2}$, is a limit of a general
one-nodal curve on $\mathcal S_t$ in the linear system $|nH|$.
Finally, by using the generality of the point $q_2$ in $\mathcal
E_1$ and by contracting $\Theta_1$ on $\tilde B$ we conclude our
proof.
\end{proof}
\begin{theorem}\label{conclusionenodo}
Let $\mathcal V_0$ be the special fibre of $\mathcal V_{nH,0,1}$.
Then, the irreducible components of $\mathcal V_0$ are
\begin{itemize}
\item $W_A(d,n)$, $W_{B}(d,n)$ and $W_{E_i}(d,n)$, for
$i=1,\dots,\,d(d-1)$, with geometric multiplicity $1$ and $T(d,n)$
with geometric multiplicity $2$, if $d,n\geq 2$;\\
\item $W_{E_1}(d,n)$ and $W_{E_2}(d,n)$ with geometric multiplicity $1$,
if $(d,n)=(2,1)$;\\
\item $W_A(d,n)$ and $W_{E_i}(d,n)$, for
$i=1,\dots,\,d(d-1)$, with geometric multiplicity $1$ and $T(d,n)$
with geometric multiplicity $2$, if $(d,n)=(d,1)$ and $d\geq 3$.
\end{itemize}
\end{theorem}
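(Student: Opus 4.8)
The plan is to assemble the three preceding lemmas and to complement them with a completeness argument ruling out any further component. Lemmas \ref{uno}, \ref{tacnodo} and \ref{lemmaW_E_i} already exhibit $W_A(d,n)$, $W_B(d,n)$, $W_{E_i}(d,n)$ and $T(d,n)$ as irreducible components of $\mathcal V_0$, together with their geometric multiplicities and the numerical ranges in which each is defined; so the real content is to show that these exhaust $\mathcal V_0$, and then to read off, in the degenerate range $n=1$, which of them actually survive.

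For completeness I would fix an arbitrary irreducible component $V$ of $\mathcal V_0$, with general element $[D]$, $D=D_A\cup D_B$, and a local analytic multisection $\Delta$ of $\mathcal V^{\mathcal S}_{nH,0,1}$ through $[D]$ meeting the general fibre $\mathcal V_t$. Over $\Delta\setminus\{0\}$ the nodes of the corresponding one-nodal curves trace out a (multi)section of $\mathcal S|_U\to\Delta$, which, after normalising $\Delta$, extends to a well-defined limit point $q\in A\cup B$. The key is the trichotomy: either $q\notin R$, or $q$ is a general point of $R$, or $q$ is one of the finitely many base points $p_i=E_i\cap R$. If $q\notin R$, then $D$ is singular at the smooth point $q$ of $\mathcal S_0$, so $D_A$ or $D_B$ acquires a node and $[D]$ lies in the irreducible divisor $W_A(d,n)$ or $W_B(d,n)$; as these have the same dimension as $V$, equality follows. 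If instead the general element of $V$ contains some $E_i$, then $V\subseteq W_{E_i}(d,n)$ (and containing two distinct $E_i$ is excluded for dimension reasons), so $V=W_{E_i}(d,n)$ by Lemma \ref{lemmaW_E_i}. In the remaining case $q$ is a general point of $R$ and no $E_i$ lies on $D$; here the double-cover and blow-up analysis of Lemma \ref{tacnodo} forces $D_A$ and $D_B$ to be tangent to $R$ at $q$, so $[D]\in T(d,n)$ and $V=T(d,n)$. This exhausts the possibilities and proves the first item for $d,n\geq 2$.

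It then remains to decide, for $n=1$, which loci are nonempty and general. Since $H$ restricts on $B$ to the pull-back of $\mathcal O_\pi(1)$, the system $|\mathcal O_B(H)|$ consists of proper transforms of lines, which admit no one-nodal member; hence $V^B_{H,0,1}=\emptyset$ and $W_B(d,1)$ fails to exist for every $d$, matching the range $(d,n)\geq(2,2)$ of Lemma \ref{uno}. For $d\geq 3$ the members of $|\mathcal O_A(H)|$ are the hyperplane sections of $S^{d-1}$, of degree $d-1\geq 2$, whose tangent-hyperplane members are one-nodal, so $W_A(d,1)$ does exist, and $T(d,1)$ is a component by Lemma \ref{tacnodo} since $(d,1)\neq(2,1)$; together with the $W_{E_i}(d,1)$ this yields the third item. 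Finally, for $(d,n)=(2,1)$ one has $A\simeq\mathbb P^2$ with $|\mathcal O_A(H)|$ the lines, so neither $W_A$ nor $W_B$ survives, while the proof of Lemma \ref{tacnodo} shows that a node approaching a general point of $R$ forces $\mathcal E'\subset\mathcal C'$ and hence a non-general limit, so $T(2,1)$ is not a component either; only the two base-point components $W_{E_1}(2,1)$ and $W_{E_2}(2,1)$ remain, which is the second item.

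The step I expect to be the main obstacle is the completeness trichotomy, and specifically the claim that a node limit landing on a general point of $R$ must give exactly the simple tangency defining $T(d,n)$ rather than some a priori more degenerate configuration. This is where the base change and iterated blow-up bookkeeping of Lemma \ref{tacnodo} is essential: one must verify that the minimal multiplicity making the relevant restriction effective equals $1$, pinning the limit down to simple tangency, and that the associated tangent plane varies generally as $q$ moves in $R$. Ruling out any component outside the list $W_A,W_B,W_{E_i},T$ ultimately rests on the dichotomy of Lemma \ref{uno} between a genuine section (geometric multiplicity $1$, node off $R$) and a strict multisection (node on $R$), together with the finiteness of the base locus $\{p_i\}$ of the pencil.
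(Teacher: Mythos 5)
Your overall strategy coincides with the paper's: assemble Lemmas \ref{uno}, \ref{tacnodo} and \ref{lemmaW_E_i} for existence and multiplicities, prove completeness by tracking where the nodes of a degenerating family land, and read off the $n=1$ cases from the ranges of validity of the lemmas (your explicit checks that $V^{B}_{H,0,1}=\emptyset$, that $W_A(d,1)$ survives exactly for $d\geq 3$, and that $T(2,1)$ degenerates are correct and consistent with the paper). The gap is in the completeness argument. Your trichotomy lists the case ``$q$ is one of the base points $p_i=E_i\cap R$'', but you never actually treat it: you replace it by the \emph{different} condition ``the general element of $V$ contains some $E_i$'', and then declare the remaining case to be ``$q$ a general point of $R$ and no $E_i$ on $D$''. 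What is missing is precisely the implication: if the nodes limit to a point lying over a base point $p_i$, then the limit curve must contain $E_i$. Without it there could a priori be a component whose general member contains no $E_i$ but whose nodes limit to a base point; such a component would lie in none of $W_A$, $W_B$, $W_{E_i}$, $T$, and the theorem would fail. The paper's Case 3 proves this implication by an intersection computation on the degree-$m$ cover with special fibre $A\cup\mathcal E_1\cup\dots\cup\mathcal E_{m-1}\cup B$: if the singular section $\gamma$ met $E_i$ at a smooth point, then the proper transform $S'\sim nH-2\Gamma$ under the blow-up along $\gamma$ satisfies $S'\cdot E_i=-2\Gamma\cdot E_i=-2<0$, forcing $E_i\subset S'$ and hence $E_i\subset D_B$; if $\gamma$ met instead a fibre $F_{i,j}$ of $\mathcal E_j$ lying over $p_i$, the same computation gives $F_{i,j}\subset S$, and blowing up successively along $F_{i,j}, F_{i,j+1},\dots$ propagates the containment along the chain until again $E_i\subset D_B$, contradicting the hypothesis. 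This step cannot be skipped.

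A second, lesser imprecision: in your last case you invoke ``the double-cover and blow-up analysis of Lemma \ref{tacnodo}'', but the multisection through $[D]$ may have order $m>2$, and that lemma only treats bisections via the order-two cover. The paper redoes the analysis after a base change of order $m$: the node section meets a fibre $F_i\subset\mathcal E_i$ over a non-base point $p$, blowing up along it shows that $F_i$ appears with multiplicity at least $2$ in the restriction of the proper transform to $\mathcal E_i$, and the fact that this proper transform is Cartier propagates the multiplicity-two contact along the chain, so that both $D_A$ and $D_B$ meet $R$ with multiplicity at least $2$ at $p$; smoothness of $D_A$ and $D_B$ then gives $[D]\in T(d,n)$, and $V=T(d,n)$ follows by dimension. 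Your closing paragraph shows you anticipated this bookkeeping as the main obstacle, but as written the appeal to the $m=2$ case alone does not cover general $m$.
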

\begin{proof}
From what we proved before, we have only to show that $T(d,n)$,
$W_A(d,n)$, $W_{B}(d,n)$ and $W_{E_i}(d,n)$, for
$i=1,\dots,\,d(d-1)$, are the only irreducible components of the
special fibre $\mathcal V_0$ of $\mathcal V_{nH,0,1}$.

\textit{Case 1}. Let $V\subset\mathcal V_0$ be an irreducible
component whose general element $[D]$ corresponds to a curve
containing $E_i$. Thus $V\subset W_{E_i}(d,n)$. But
$dim(V)=dim(W_{E_i}(d,n))$, so $V= W_{E_i}(d,n)$.

\textit{Case 2}. Let $V$ be an irreducible component of $\mathcal
V_0$ whose general element $[D]$ corresponds to a curve $D=D_A\cup
D_{B}$ which is singular at a smooth point of $A\cup B$ and which
does not contain $E_i$, for every $i$. If $D_A$ is singular, thus
$V=W_A(d,n)$ and $D_{B}$ is smooth. If $D_{B}$ is singular, then
$V=W_{B}(d,n)$ and $D_A$ is smooth.

\textit{Case 3}. Assume that $V$ is an irreducible component of
$\mathcal V_0$ different from $W_A(d,n)$, $W_{B}(d,n)$ and
$W_{E_i}(d,n)$, for every $i$. Thus, from what we observed above, if
$D=D_A\cup D_{B}$ is the curve corresponding to the general element
$[D]$ of $V$, then $D_A$ and $D_{B}$ are both smooth curves. If
$U\subset\mathbb A^1$ is a sufficiently small analytic neighborhood
and $C\subset \mathcal V_{nH,0,1}|_U$ is a general $m$-multisection
passing through the point $[D]$, then the family of curves $\mathcal
D\to C$, naturally parametrized by $C$, has special fibre $\mathcal
D_0=mD$ and general fibre $\mathcal D_t=D_t^1\cup\dots\cup D_t^m$,
equal to the union of $m$ one-nodal irreducible curves in the linear
system $|\mathcal O_{\mathcal S_t}(n)|$. If we make a base change
\begin{displaymath}
\xymatrix{\mathcal X\ar@/_/[ddr]\ar[dr]\ar@/^/[drrr]_f& & &\\
& \mathcal \mathcal S^\prime \ar[d]\ar[r]_h &
\mathcal S\ar[d]\ar[r]& \mathbb P^3\\
& \mathbb A^1\ar[r]^{\nu_m}&\mathbb A^1 & }
\end{displaymath}
of order $m$ and we smooth the total space of the obtained family,
we get a family $\mathcal X\to\mathbb A^1$ which is a cover of order
$m$ of $\mathcal S\to\mathbb A^1$, totally ramified along its
special fibre. In particular, the special fibre of $\mathcal X$ is
$\mathcal X_0=A\cup \mathcal E_{1}\cup\dots\cup \mathcal E_{m-1}\cup
B$, where every $\mathcal E_{i}$ is a $\mathbb P^1$-bundle on an
irreducible curve $R_i$ isomorphic to $R$, for $i=1,\dots, m$, with
$A$ intersecting $\mathcal E_1$ along $R_1$,\,\dots, $\mathcal E_i$
intersecting $\mathcal E_{i-1}$ along $R_i\neq R_{i-1}$,\,\dots,
$\mathcal E_{m-1}$ intersecting $B$ along a section $R_m\simeq R$,
with $R_m\neq R_{m-1}$. The image into $\mathbb P^3$ of a section
$\gamma$ of $\mathcal X$, intersecting the special fibre at a point
$q\in \mathcal E_{i}$, is an analytic curve intersecting $A$ with
multiplicity $m-i$ at the point $p\in R$, image of $q$, and $B$ with
multiplicity $i$ at the same point. Now, we have $m$ irreducible
divisors $\mathcal D^i\subset\mathcal X$, $i=1,\dots,m$, in the
linear system $|nH|$, mapped to $\mathcal D\subset \mathcal S$, via
the morphism $\mathcal X\to\mathcal S$. For our purposes, it is
enough to consider only one of these divisors, say $\mathcal D^1$.
All fibres of $\mathcal D^1$ are now reduced. The general fibre of
$\mathcal D^1$ is an irreducible one-nodal curve, corresponding to a
general element of $\mathcal V_t$, while $\mathcal D^1$ cuts on
$\mathcal X_0$ a connected Cartier divisor which restricts to $D_A$
on $A$, to $D_{B}$ on $B$ and a union of fibres, counted with the
right multiplicity, on every $\mathcal E_{i}$. The singular locus of
$\mathcal D^1$ is a section of $\mathcal X$, which we denote by
$\gamma_1$.

Now, by the hypothesis that $D_B$ does not contain $E_i$, for every
$i$, we have that $\gamma_1$ does not intersect $\mathcal X_0$ at a
point on $E_i$ or on the fibre $F_{i,j}$ of $\mathcal E_j$, whose
image into $\mathcal S_0$ is the point $E_i\cap R$, for every $i$
and $j$. To see this, let $S\sim nH$ be a divisor whose singular
locus $\gamma$ intersects $E_i$ at a smooth point of $\mathcal X_0$
and let $\mathcal Y$ be the blowing-up of $\mathcal X$ along
$\gamma$, with exceptional divisor $\Gamma$. If $ S^\prime$ is the
proper transform of $S$ on $\mathcal Y$, then
$S^\prime\,E_i=-2\Gamma\,E_i=-2$, so $E_i\subset S^\prime$ and hence
$E_i\subset D_B\subset S\subset \mathcal X$. Similarly, if $\gamma$
intersects $\mathcal X_0$ at a smooth point of $F_{i,j}$, then, by
arguing as before, we see that $F_{i,j}\subset S$. Moreover, by
blowing-up along $F_{i,j}$, if $j<m-1$, we find that
$F_{i,j+1}\subset S$ and so on, until we get that $E_i\subset S$.

Finally, by the hypothesis that $D_A$ and $D_{B}$ are smooth and
from what we proved above, the curve $\gamma_1$ must intersect
$\mathcal X_0$ at a smooth point, say $q_1$, lying on a fibre
$F_{i}$ of $\mathcal E_{i}$, whose image point $p$ in $\mathbb P^3$
is not a base point of the pencil $\mathcal F$. Let $\mathcal Y$ be
the blowing-up of $\mathcal X$ along $\gamma_1$ and let us denote by
$\Gamma_1$ the new exceptional divisor. The proper transform, which
we still denote by $F_{i}$,  of $F_{i}$ on $ \mathcal Y_0$ is now a
$(-1)$-curve on the proper transform $\mathcal E_{i}^\prime$ of
$\mathcal E_{i}$. Now, if ${\mathcal D^1}^\prime$ is the proper
transform of $\mathcal D^1$, by the hypothesis that $\mathcal D^1$
is singular along $\gamma_1$, we have that ${\mathcal
D^1}^\prime\sim nH-2\Gamma_1$ and this implies that $F_{i}$ is
contained with multiplicity at least $2$ in the divisor  ${\mathcal
D^1}^\prime|_{\mathcal E_i}\sim 2F_i+cF_{\mathcal E_i}$, where
$c=n(d-1)-2$ and $F_{\mathcal E_i}$ is the linearly equivalence
class of the fibre of $\mathcal E_i$. By using that ${\mathcal
D^1}^\prime$ is a Cartier divisor, we find that $D_A={\mathcal
D^1}^\prime|_A$ and $D_B={\mathcal D^1}^\prime|_B$, intersect $R$
with multiplicity at least $2$ at the point $p$ of $R$ corresponding
to the fibre $F_{i}$. Since $D_A$ and $D_B$ are both smooth curves,
it follows that the curve $D_A\cup D_B\subset A\cup B$ is
parametrized by a point $[D_A\cup D_B]\in T(d,n)$. This proves the
statement.
\end{proof}
\begin{corollary}\label{nodosupuntodoppio}
Let $\mathcal V^\mathcal S_{nH,k,\delta}$ be the Universal
Severi-Enriques variety introduced in the first section. Let
$\mathcal V_0$ be the special fibre of $\mathcal V_{nH,k,\delta}$
and let $[D]\in\mathcal V_0$ be any point, corresponding to a curve
$D=D_A\cup D_B\subset A\cup B$. Assume that $D_A$ and $D_B$
intersect transversally $R=A\cap B$ at a point $p$, in such a way
that $D$ has a node at $p$. Let $U\subset\mathbb A^1$ be an analytic
neighborhood small enough of $0\in\mathbb A^1$ and let $C$ be a
general local $m$-multisection of $ \mathcal V_{nH,k,\delta}$
passing through $[D]$. Denote by $\mathcal D\to C$ the family of
curves naturally parametrized by $C$ and by $\mathcal D_t$ the
general fibre of $\mathcal D$, with irreducible components $\mathcal
D_t^1,\dots,\mathcal D^m_t$. Then, the point $p$ is not a limit of
any singular point of $\mathcal D^i_t$, for every $i=1,\dots,m$.
\end{corollary}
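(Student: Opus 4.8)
The plan is to argue by contradiction, reusing verbatim the base change and blow-up machinery developed in the proof of Theorem \ref{conclusionenodo}. First I would pass to the order-$m$ cover $\mathcal X\to\mathbb A^1$ of $\mathcal S\to\mathbb A^1$ totally ramified along the special fibre, obtained from the general $m$-multisection $C$ exactly as in Case 3 of that proof, so that $\mathcal X$ is smooth and $\mathcal X_0=A\cup\mathcal E_1\cup\dots\cup\mathcal E_{m-1}\cup B$ is the chain of surfaces with the $\mathcal E_j$ being $\mathbb P^1$-bundles over copies $R_j$ of $R$. The family $\mathcal D$ pulls back to $m$ divisors $\mathcal D^i\sim nH$ on $\mathcal X$, each with general fibre $\mathcal D^i_t$ a general element of $\mathcal V_t$, i.e. an irreducible curve with $\delta$ nodes and $k$ cusps, and mapped to $\mathcal D$ under $\mathcal X\to\mathcal S$. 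Since the $\mathcal D^i$ are interchanged by the Galois action of the cover, it suffices to treat one of them, say $\mathcal D^1$, and to show that none of its $\delta+k$ singular points specializes to $p$.

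Suppose then, for contradiction, that some singular point of $\mathcal D^1_t$ (a node or a cusp) does specialize to $p$. Its locus is a section $\gamma$ of $\mathcal X\to\mathbb A^1$ which, by the generality of $C$, meets $\mathcal X_0$ transversally at a smooth point $q$; since $q$ lies over $p\in R$, it must lie in the interior of a fibre $F$ of one of the bundles $\mathcal E_j$, where $F$ contracts to $p$ under $\mathcal X\to\mathcal S_0$. Blowing up $\mathcal X$ along $\gamma$ with exceptional divisor $\Gamma$, the fact that $\mathcal D^1$ has a node or a cusp along $\gamma$ — both of multiplicity $2$ — gives ${\mathcal D^1}'\sim nH-2\Gamma$ for the proper transform. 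Because $H\cdot F=0$ and $\Gamma\cdot F=1$, one computes ${\mathcal D^1}'\cdot F=-2$, exactly as in the tangency computation of Lemma \ref{tacnodo} and Theorem \ref{conclusionenodo}, so that $F$ occurs with multiplicity at least $2$ in ${\mathcal D^1}'|_{\mathcal E_j'}$.

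The main step is then to transport this excess multiplicity from the fibre $F$ all the way to the two ends of the chain. Using that ${\mathcal D^1}'$ is a Cartier divisor and propagating the restriction along $R_1,\dots,R_m$ — the same propagation used for the $F_{i,j}$ in the proof of Theorem \ref{conclusionenodo} — I would conclude that $D_A={\mathcal D^1}'|_A$ and $D_B={\mathcal D^1}'|_B$ each meet $R$ with multiplicity at least $2$ at $p$; that is, both $D_A$ and $D_B$ are tangent to $R$ at $p$. This directly contradicts the hypothesis that $D_A$ and $D_B$ cross $R$ transversally at $p$. Hence no singular point of $\mathcal D^1_t$, and by the symmetry of the cover no singular point of any $\mathcal D^i_t$, can have $p$ as a limit.

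I expect the propagation along the chain to be the only delicate point: one must check that the excess multiplicity of $F$ inside a single $\mathcal E_j$ forces the same tangency order at $p$ on every adjacent component, and in particular on the two ends $A$ and $B$, via the gluing (Cartier) condition and the triple point formula already invoked in Lemma \ref{tacnodo}. A minor separate check is that when $p$ happens to lie over one of the base points $E_i\cap R$ the chain of fibres over $p$ is modified by the exceptional curves, but the intersection-multiplicity computation, and hence the contradiction, goes through unchanged.
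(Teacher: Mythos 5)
Your proposal is correct and takes essentially the same route as the paper: the paper's own proof of this corollary consists precisely of invoking the order-$m$ base change and the blow-up computation of Case 3 of Theorem \ref{conclusionenodo}, concluding that if $p$ were a limit of a singular point then $p$ would appear with multiplicity at least $2$ in the divisor $R\cap D_A=R\cap D_B$, contradicting transversality. Your closing remark about the points $E_i\cap R$ also matches the paper's final observation that the argument applies to every $p\in R$, including $p=E_i\cap R$.
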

\begin{proof}
If we make a base change of order $m$ and we repeat the same
argument as in Case 3 of Lemma \ref{conclusionenodo}, we see that,
if $p$ is limit of a singular point of $D^i_t$, then $p$ is
contained with multiplicity at least $2$ in the divisor $R\cap
D_B=R\cap D_A$. Finally, notice that this is true for every point
$p\in R$, also if $p=E_i\cap R$, for some $i=1,\dots,d(d-1)$.
\end{proof}

\section{The case of one-cuspidal curves}\label{one cuspidal curves}
In this section we want to determine all irreducible components of
the special fibre $\mathcal V_0$ of $\mathcal V_{nH,k=1,\delta=0}$
with the respective geometric multiplicities (see definition
\ref{geometricmultiplicity}). We will assume $d,n\geq 2$, in such a
way that on the general surface $S_t\subset \mathbb P^3$ of the
pencil $\mathcal F$ there exist irreducible curves in the linear
system $|\mathcal O_{S_t}(n)|$ with a cusp at the general point of
$S_t$ and no further singularities.
In particular, under the hypothesis $d,n\geq 2$, we have that
$V_{nH_t,1,0}^{\mathcal S_t}$ is a non-empty, irreducible subvariety
of codimension $2$ of $|\mathcal O_{\mathcal S_t}(n)|$.
\begin{lemma}\label{molteplicita1cuspide}
Assume that $d\geq 2$ and $n\geq 3$. Then $$|\mathcal
O_{A}(n)|\times_{|\mathcal O_R(n)|}V^{B}_{nH,1,0}$$ is an
irreducible component of $\mathcal V_0$ of multiplicity $1$. If
$[D=D_A\cup D_{B}]$ is its general element, then $D_A$ and $D_{B}$
are irreducible, they intersect transversally $R$, $D_A$ is smooth
and $D_{B}$ has only one cusp as singularity. In particular, $D$
does not contain any exceptional divisor $E_i\subset B$, with
$i=1,\dots,d(d-1)$.
\end{lemma}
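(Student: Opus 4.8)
The plan is to adapt the smoothing strategy of Lemma \ref{uno} to the cuspidal situation, replacing the nodal edge by a cuspidal one. First I would record the elementary facts about $V:=|\mathcal O_{A}(n)|\times_{|\mathcal O_R(n)|}V^{B}_{nH,1,0}$. Since $n\geq 3$ guarantees that $V^{B}_{nH,1,0}$ is non-empty and irreducible with general member an irreducible $1$-cuspidal curve whose cusp lies at a general (hence smooth, and not on any $E_i$) point of $B$ and which meets $R$ transversally, and since $|\mathcal O_{A}(n)|$ is irreducible with general member smooth and transverse to $R$, a Bertini and gluing argument shows that $V$ is irreducible and that its general element $D=D_A\cup D_{B}$ has exactly the stated shape. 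A dimension count, $\dim V=\dim|\mathcal O_{A}(n)|+\dim V^{B}_{nH,1,0}-\dim|\mathcal O_R(n)|=\dim|\mathcal O_{A\cup B}(n)|-2=\dim\mathcal V_t$, then shows that it suffices to prove the single inclusion $V\subset\mathcal V_0$ together with the existence of an honest section of $\mathcal V^{\mathcal S}_{nH,1,0}$ through the general $[D]$: the former forces $V$ to be a whole irreducible component, having the correct dimension, and the latter gives geometric multiplicity $1$, which by the Remark following Definition \ref{geometricmultiplicity} coincides with the multiplicity in the case $(k,\delta)=(1,0)$.

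For the inclusion and the section I would mimic the argument of Lemma \ref{uno}, using a cuspidal rather than nodal edge. Choose a section $\gamma$ of $\mathcal S\to\mathbb A^1$ meeting $\mathcal S_0$ at a general point $p\in B$, so that $p\notin R$ and $p\notin E_i$, and let $S\sim nH$ be a general divisor on $\mathcal S$ having a cuspidal edge along $\gamma$; one must check that such $S$ exists and that, by generality, its fibre $S\cap\mathcal S_t$ is an irreducible curve with a single cusp at $\gamma\cap\mathcal S_t$ and no further singularity. To control the class of $S$ I would resolve the cuspidal edge in two steps, as in Lemma \ref{tacnodo}: let $\mathcal X=Bl_\gamma\mathcal S$ with exceptional divisor $\Gamma$, so that the strict transform satisfies $S'\sim nH-2\Gamma$ and $S'|_\Gamma$ is a double section $2\sigma$ of the conic bundle $\Gamma\to\gamma$, this being precisely the condition that the tangent cone of $S$ along $\gamma$ be a double plane, i.e.\ a cusp rather than a node; then blow up $\mathcal X$ along $\sigma$ to obtain a further exceptional divisor $\Gamma_2$ along which the new strict transform becomes smooth and transverse. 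With the resulting class in hand I would run the exact-sequence and semicontinuity argument of Lemma \ref{uno}: from the restriction sequence for $\mathcal O(nH-D)$ on the resolved family, with $D$ the combination of exceptional divisors recording the cusp, the image of the restriction map $r_0$ to the special fibre has the expected dimension $\dim|\mathcal O_{A\cup B}(n)|-4$, so $r_0$ is surjective onto the subsystem of curves on $A\cup B$ having a cusp at $p$. This exhibits the general such curve as the limit $S\cap\mathcal S_0$ of the $1$-cuspidal curves $S\cap\mathcal S_t$; since $S$ is a single divisor, its fibres form an honest section of $\mathcal V^{\mathcal S}_{nH,1,0}$ through $[D]$, giving geometric multiplicity $1$.

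Finally I would verify the remaining assertions. By the generality of $p\in B$ the cusp of $D_{B}$ sweeps out a two-parameter family, and together with the freedom in $D_A$ this recovers the full dimension computed above, so $V$ is an irreducible component of $\mathcal V_0$ and not merely a subvariety of a larger one. The claim that $D$ contains no exceptional curve $E_i$ follows because the cusp sits at a general point $p\notin\bigcup_i E_i$ and both $D_A$ and $D_{B}$ are general: if some $E_i$ were a component of the limit, then, blowing up the singular section as in the last part of Lemma \ref{uno} and in Case 3 of Theorem \ref{conclusionenodo}, the negative intersection $S'\cdot E_i<0$ would force $E_i\subset S$ for every member of the family, contradicting both the generality and the fact that the cuspidal point lies off $R$.

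The main obstacle is the construction and control of the cuspidal-edge divisor $S$. Unlike the nodal case, a cusp is not imposed by a single multiplicity condition, so the two-step blow-up bookkeeping must be carried out carefully enough to ensure that the tangent-cone-is-a-double-plane condition is linearly and independently imposed, so that the system of cuspidal-edge divisors along $\gamma$ is non-empty of the predicted dimension, and that the limiting singularity of $S\cap\mathcal S_0$ is a genuine cusp situated at $p$, neither degenerating to a worse singularity nor migrating to the singular locus $R$ of $\mathcal S_0$; the latter would instead produce a boundary component, in analogy with $T(d,n)$ of Lemma \ref{tacnodo}. Establishing the surjectivity of $r_0$ with the exact expected image dimension $\dim|\mathcal O_{A\cup B}(n)|-4$ is the technical heart of the argument.
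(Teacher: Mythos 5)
Your proposal follows essentially the same route as the paper's own (very terse) proof: the paper combines the dimension count $\dim(|\mathcal O_{A\cup B}(n)|)-2=\dim(V^{\mathcal S_t}_{nH_t,1,0})$ with the observation that divisors of $|\mathcal O_{\mathcal S}(n)|$ with cuspidal singularities along a section $\gamma$ through a general point $p\in B$ cut out on $A\cup B$ a codimension $4$ family, hence all curves with a cusp at $p$, and then lets $p$ vary. Your two-step resolution of the cuspidal edge (blow up $\gamma$, then the tangency section $\sigma$) and the semicontinuity argument are exactly the tools the paper itself uses in Lemmas \ref{uno}, \ref{tacnodo} and \ref{flesso} to make such statements precise, so in substance you have reconstructed the intended argument.

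One bookkeeping point needs correcting before the middle step is literally true. With $D=2\Gamma+3\Gamma_2$ recording the cusp, semicontinuity gives for the image of the \emph{linear} restriction map $r_0$ the dimension $\dim|\mathcal O_{A\cup B}(n)|-5$, not $-4$: this image consists of curves with (at least) a cusp at $p$ whose cuspidal tangent is the \emph{fixed} direction $\sigma(0)\in\mathbb P(T_pB)$ determined by the tangency section (compare Step 7 of Lemma \ref{flesso}, where the same count produces $h^0(nH)-5$). The codimension $4$ locus of all curves with a cusp at $p$ is not a linear subsystem, so it cannot be the image of any single $r_0$; it is swept out as the union, over the pencil of directions $\sigma(0)$, of these codimension $5$ linear systems. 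Once this extra one-parameter variation of $\sigma$ is inserted before invoking the generality of $p$, your argument goes through; the remaining parts (reduction to $V\subset\mathcal V_0$ plus an honest section, identification of multiplicity with geometric multiplicity via the Remark after Definition \ref{geometricmultiplicity}, and the exclusion of the $E_i$) are sound and agree with the paper.
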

\begin{proof}
If $d\geq 2$ and $n=2$, then the variety $|\mathcal
O_{A}(n)|\times_{|\mathcal O_R(n)|}V^{B}_{nH,1,0}$ has dimension
smaller that $ dim(|\mathcal O_{A\cup B}(2)|)-2$ and, in particular
it cannot be an irreducible component of $\mathcal V_0$. On the
contrary, when $d\geq 2$ and $n\geq 3$ we have that $dim(|\mathcal
O_{A\cup B}(n)|)-2=dim(V_{nH_t,1,0}^{ S_t})$. We leave to the reader
to verify that the general element $[D=D_A\cup D_B]$ of $|\mathcal
O_{A}(n)|\times_{|\mathcal O_R(n)|}V^{B}_{nH,1,0}$ corresponds to a
curve as in the statement.
To prove that $[D]$ is a limit of the general element of $\mathcal
V_t$, consider in the linear series $|\mathcal O_{\mathcal S}(n)|$
the family of divisors singular along a section $\gamma$ of
$\mathcal S$ passing through a general point $p$ of $B$ and see that
it cuts out on $A\cup B$ a family of curves of codimension $4$ in
$|\mathcal O_{A\cup B}(n)|$, so it cuts out on $A\cup B$ all curves
with a cusp at $p\in B$ in the linear system $|nH|$. By the
generality of $p$, the statement is proved.
\end{proof}

\begin{lemma}
Assume that $d,n\geq 2$ and $(d,n)\neq (2,2)$. Then
$$V^{A}_{nH,1,0}\times_{|\mathcal O_R(n)|}|\mathcal O_{B}(n)|$$ is an
irreducible component of $\mathcal V_0$ of geometric multiplicity
$1$. If $[D=D_A\cup D_{B}]$ is its general element, the curve
$D_A\cup D_B$ does not contain any exceptional divisor $E_i\subset
B$,the curve $D_A$ and $D_{B}$ are irreducible, they intersect
transversally $R$, $D_B$ is smooth and $D_{A}$ has only one cusp as
singularity. Finally, $V^{A}_{nH,1,0}\times_{|\mathcal
O_R(n)|}|\mathcal O_{B}(n)|$ and $|\mathcal
O_{A}(n)|\times_{|\mathcal O_R(n)|}V^{B}_{nH,1,0}$ are the only
irreducible components of $\mathcal V_0$ of geometric multiplicity
$1$ and whose general element does not contain any $E_i$.
\end{lemma}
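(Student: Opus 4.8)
The plan is to mirror the structure of Lemma \ref{molteplicita1cuspide}, exchanging the roles of $A$ and $B$, and then to add a uniqueness argument at the end. First I would address the dimension count: I need $\dim\bigl(V^{A}_{nH,1,0}\times_{|\mathcal O_R(n)|}|\mathcal O_{B}(n)|\bigr)=\dim(|\mathcal O_{A\cup B}(n)|)-2=\dim(V^{S_t}_{nH_t,1,0})$ under the hypotheses $d,n\geq 2$ and $(d,n)\neq(2,2)$. The excluded case $(2,2)$ is exactly where imposing a cusp on $A=S^{d-1}$ drops the dimension more than expected (just as $n=2$ was excluded on the $B$ side), so this is where the hypothesis enters. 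The description of the general element $[D=D_A\cup D_B]$—that $D_A$ is irreducible with a single cusp, $D_B$ is smooth and irreducible, both meet $R$ transversally and avoid every $E_i$—follows from Bertini applied to the fibre product, together with the generality of the cusp point on $A$ away from $R$ and away from the $E_i$; this I would state and leave the routine verification to the reader exactly as done in the previous lemma.

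**Proving it is a limit, with geometric multiplicity one.** The core step is to show $[D]$ is a limit of the general one-cuspidal curve on $\mathcal S_t$. I would follow the recipe of Lemma \ref{molteplicita1cuspide} verbatim but on the $A$-component: take a general point $p\in A$ with $p\notin R$, pass a section $\gamma$ of $\mathcal S\to\mathbb A^1$ through $p$, and consider in $|\mathcal O_{\mathcal S}(n)|$ the subfamily of divisors having a \emph{cusp} (not merely a node) along $\gamma$. Imposing a cusp along a section is a codimension-$4$ condition on the fibres, matching the codimension of $V^{S_t}_{nH_t,1,0}$ being cut out plus the two extra conditions from specialization; the restriction to $A\cup B$ therefore sweeps out precisely the curves with a cusp at $p\in A$. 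By the generality of $p$ on $A$, this shows the general element of $V^{A}_{nH,1,0}\times_{|\mathcal O_R(n)|}|\mathcal O_{B}(n)|$ lies in $\mathcal V_0$, and because the cusp stays at a smooth point of $\mathcal S_0$ the limiting section is a genuine $1$-section—hence geometric multiplicity $1$, by the same reasoning as the cusp-on-$B$ case.

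**The uniqueness clause.** The genuinely new content is the final assertion that these two varieties are the \emph{only} components of $\mathcal V_0$ of geometric multiplicity $1$ whose general element avoids every $E_i$. For this I would argue as in the second half of Lemma \ref{uno}: let $V$ be such a component, $[D=D_A\cup D_B]$ its general element with $E_i\not\subset D$ for all $i$, and suppose an analytic $1$-section $\Delta$ of $\mathcal V^{\mathcal S}_{nH,1,0}|_U$ passes through $[D]$. The cusp locus of the family $\mathcal C\to\Delta$ traces out a section of $\mathcal S|_U\to\Delta$; invoking the analogue of Corollary \ref{nodosupuntodoppio} for cusps, the cusp cannot specialize to a point of $R=A\cap B$ (otherwise the limit would force a tangency or higher contact with $R$, landing $[D]$ in a different component), so the section meets $\mathcal S_0$ at a smooth point $q\notin R$. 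If $q\in A$ then $D_A$ carries the cusp and $V=V^{A}_{nH,1,0}\times_{|\mathcal O_R(n)|}|\mathcal O_{B}(n)|$; if $q\in B$ then $V=|\mathcal O_{A}(n)|\times_{|\mathcal O_R(n)|}V^{B}_{nH,1,0}$.

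**Expected obstacle.** The hardest point will be justifying that imposing a cusp along $\gamma$ really behaves like a codimension-$4$ condition that surjects onto the relevant curves on $A\cup B$—the cuspidal analogue is more delicate than the nodal one, because a cusp is not a transverse double point and its limit can degenerate. One must check that a \emph{general} such divisor restricts on $A$ to a curve with an honest cusp (and not a worse singularity or a tacnodal collision with $R$), and that the surjectivity-of-restriction argument from Lemma \ref{uno}—the exact-sequence computation showing $\dim(\mathrm{Im}\,r_0)=\dim(\mathrm{Im}\,r_t)$—goes through after blowing up $\gamma$ with the appropriate multiplicity for a cusp rather than the multiplicity $2$ used for a node. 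Verifying that the cusp stays away from $R$ in the limit, so that the construction does not silently slide into the tangency locus $T(d,n)$-type behaviour, is the crux.
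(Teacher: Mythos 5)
Your proposal is correct and is essentially the proof the paper intends: the paper's own proof reads only ``left to the reader,'' the expected argument being precisely yours, namely the construction of Lemma \ref{molteplicita1cuspide} with the roles of $A$ and $B$ exchanged (divisors with cuspidal singularities along a section through a general point of $A$, cutting out on $A\cup B$ the codimension-$4$ family of curves with a cusp at that point) for existence and geometric multiplicity $1$, together with the section argument from the second half of Lemma \ref{uno} for the uniqueness clause. One small simplification: in the uniqueness step you do not need any cuspidal analogue of Corollary \ref{nodosupuntodoppio}, since the cusp locus of the family over a $1$-section is a section of $\mathcal S|_U\to U$, and such a section meets the reduced special fibre $A+B$ with total intersection number $1$, so it cannot pass through a point of $R=A\cap B$ --- which is exactly how Lemma \ref{uno} concludes.
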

\begin{proof}
The proof is left to the reader.
\end{proof}
\begin{remark}
By the previous Lemmas and by Corollary \ref{nodosupuntodoppio}, the
other possible irreducible components of the special fibre $\mathcal
V_0$ of $\mathcal V_{nH,1,0}$ "are produced" by degenerations of the
general element $[D_t]$ of $\mathcal V_t$ such that as $D_t$ goes to
$A\cup B$, the cusp of $D_t$ specializes to the a point of the
singular locus $R$ of $A\cup B$.
\end{remark}
Let $F(d,n)\subset |\mathcal O_{A\cup B}(n)|$ be defined as the
Zariski closure of the quasi projective variety
\begin{eqnarray}
\{[D] & | &\,D_A\,\textrm{and}\, D_{B}\,\textrm{intersect}
\,R\,\textrm{at a general point }\,p\,\textrm{with
multiplicity}\,\,3\,\nonumber\\
& &\textrm{and they are smooth at}\,\,\,p\}.\nonumber
\end{eqnarray}
Notice that $F(d,n)$ parametrizes curves cut out on $A\cup B$ by
surfaces of degree $n$ intersecting $R$ with multiplicity $3$ at a
general point $p$ of $R$. If $d=n=2$, then $F(2,2)$ is cut out on
$A\cup B$ by quadrics containing $R$ and you can verify that
$dim(F(2,2))< dim(|\mathcal O_{A\cup B}(n)|)-2$. If $d,n\geq 2$ and
$(d,n)\neq (2,2)$, then $dim(F(d,n))=dim(|\mathcal O_{A\cup
B}(n)|)-2$ and moreover the general element $F(d,n)$ corresponds to
a curve $D=D_A\cup D_{B}$ such that $D_A$ and $D_{B}$ are smooth and
they intersect transversally $R$ at further $d(d-1)-3$ general
points, outside the multiplicity $3$ intersection point.
\begin{lemma}\label{flesso}
Let $(d,n)\geq (2,2)$, then $F(d,n)$ is a non-empty irreducible
component of the special fibre $\mathcal V_0$ of $\mathcal
V_{nH,1,0}$ if and only if $(d,n)\neq (2,2).$ The geometric
multiplicity of $F(d,n)$ is $2$.
\end{lemma}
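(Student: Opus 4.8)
The plan is to imitate closely the proof of Lemma~\ref{tacnodo} for $T(d,n)$, replacing the nodal section by a cuspidal one and carrying the blow-up analysis one step further, so that the order of contact with $R$ jumps from two (the tangency appearing in $T(d,n)$) to three. The geometric multiplicity $2$ should again come from a degree-two base change, while the extra order of contact should come from the fact that a cusp is an $A_2$-singularity and thus requires one more blow-up to resolve.

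First I would settle the ``only if'' direction by the dimension count already recorded before the statement. For $(d,n)=(2,2)$ the locus $F(2,2)$ is cut out on $A\cup B$ by the quadrics containing $R$, and $\dim F(2,2)<\dim|\mathcal O_{A\cup B}(2)|-2$; since under $d,n\geq 2$ the variety $V^{\mathcal S_t}_{nH_t,1,0}$ has codimension $2$, every component of $\mathcal V_0$ has codimension $2$ in $|\mathcal O_{A\cup B}(n)|$, and this prevents $F(2,2)$ from being a component. For $(d,n)\neq(2,2)$ one has instead $\dim F(d,n)=\dim|\mathcal O_{A\cup B}(n)|-2$, the correct dimension, so it only remains to realise the general point of $F(d,n)$ as a limit of one-cuspidal curves.

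For the positive direction I would perform the same degree-two base change as in Lemma~\ref{tacnodo}: take the double cover $\mathcal S'\to\mathcal S$ totally ramified over the special fibre, blow up the ambient space along $R$, and obtain a smooth family $\mathcal X\to\mathbb A^1$ with special fibre $A\cup\mathcal E\cup B$, where $\mathcal E\simeq\mathbb F_{d-1}$; let $F$ be the fibre of $\mathcal E$ over a general point $p\in R$. Now I would choose a general divisor $\mathcal C\sim nH$ on $\mathcal X|_U$ having an ordinary cusp along a section $\gamma$ meeting $F$ at a general point $q$; such a $\mathcal C$ exists because, under $d,n\geq 2$, the general fibre $\mathcal X_t$ carries one-cuspidal curves in $|nH_t|$ with the cusp at a general point. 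The cusp has multiplicity two along $\gamma$, so after blowing up $\gamma$ the proper transform $\mathcal C'\sim nH-2\Gamma$ is merely \emph{tangent} to the exceptional divisor along a new ridge (the strict transform of the cuspidal tangent direction); unlike the nodal case a further blow-up along this tangency ridge is then needed to separate the branch from the exceptional locus. Tracking, via the triple point formula (exactly as in the identifications $\Theta\simeq\mathbb F_1$, $\Theta_1\simeq\mathbb F_0$ of Lemma~\ref{tacnodo}), the self-intersections of the successive exceptional surfaces and the minimal multiplicities with which the various fibre classes enter the restricted divisor, one should deduce that $\mathcal C|_A$ and $\mathcal C|_B$ pass smoothly through $p$ and meet $R$ there with intersection multiplicity exactly $3$; hence $[\mathcal C_A\cup\mathcal C_B]\in F(d,n)$.

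To see that this limit point is general in $F(d,n)$, I would run the semicontinuity argument of Lemma~\ref{tacnodo}: on the fully blown-up family one writes an exact sequence of the shape \eqref{special}, with $D$ the total exceptional divisor carrying the multiplicities just computed, and uses that $\dim\mathrm{Im}(r_0)$ is independent of $t$ to evaluate it on a general fibre. Contracting the exceptional surfaces and interpreting the relevant fibre of $\mathcal E$ as parametrising the planes through $T_pR$, one identifies the limit curves as those cut on $S^{d-1}\cup\pi$ by the surfaces of degree $n$ meeting $R$ with multiplicity $3$ at $p$, which is precisely the definition of $F(d,n)$; by the generality of $q$ on $F$ this proves that $F(d,n)$ is an irreducible component of $\mathcal V_0$. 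Finally, the image in $\mathcal S$ of the family $\mathcal C$ is a local bisection of $\mathcal V^{\mathcal S}_{nH,1,0}$ through $[\mathcal C_A\cup\mathcal C_B]$, whose general fibre is a pair of one-cuspidal curves and whose special fibre is twice the limit, so the geometric multiplicity is at most $2$; together with Lemma~\ref{uno} and Corollary~\ref{nodosupuntodoppio}, which exclude a local section (the cusp cannot specialise to a smooth point of $A\cup B$ and still land in $F(d,n)$), this gives geometric multiplicity exactly $2$. The main obstacle I expect is precisely the middle step: because the cuspidal ridge is an $A_2$-singularity it demands an extra blow-up compared with $T(d,n)$, and correctly keeping track of the multiplicities along the resulting chain of exceptional surfaces --- and thereby pinning the contact order at $3$ rather than $2$, while also controlling the position of the cuspidal tangent --- is the delicate part of the argument.
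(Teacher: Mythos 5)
Your overall skeleton (the totally ramified double cover, a bisection of $\mathcal V^{\mathcal S}_{nH,1,0}$ giving geometric multiplicity $2$, semicontinuity to identify the limits, and the dimension count excluding $(d,n)=(2,2)$) matches the paper, but the central step of your construction fails, and this is a genuine gap rather than a matter of bookkeeping. You take a \emph{general} divisor $\mathcal C\sim nH$ on $\mathcal X|_U$ with cuspidal singularities along a section $\gamma$ through a general point $q$ of a fibre $F\subset\mathcal E$, and you claim that after the blow-up analysis the limits $\mathcal C|_A$ and $\mathcal C|_B$ come out smooth at $p$ and meet $R$ there with multiplicity $3$. They do not. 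Since $nH|_{\mathcal E}\sim n(d-1)F_{\mathcal E}$, the restriction $\mathcal C|_{\mathcal E}$ is a union of fibres; for a general $\mathcal C$ with cusps along $\gamma$ the multiplicity of the threefold divisor $\mathcal C$ at the point $q$ is $2$, not $3$ (local model $x^2=ty^3$, whose cuspidal tangent rotates onto the fibre direction as $t\to 0$), so $F$ enters $\mathcal C|_{\mathcal E}$ with multiplicity $2$. This is exactly the case $m=2$ analysed in Lemma \ref{molteplicita2cuspide}, and the conclusion there is that $D_A$ and $D_B$ acquire a \emph{double point} at $p$: the limit lies in a locus of codimension at least $3$ and is not a general point of $F(d,n)$. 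The idea you are missing is the paper's opening move in Lemma \ref{flesso}: first blow up $\mathcal X$ at the point $q$, with exceptional divisor $T\simeq\mathbb P^2$, and take $\mathcal C\sim nH-3T$ with cusps along $\tilde\gamma$, i.e.\ impose on $\mathcal C$ multiplicity $3$ at $q$ (local model $ty^2=x^3$), so that $\mathcal C|_T$ is a cuspidal cubic and $F$ enters $\mathcal C|_{\mathcal E'}$ with multiplicity $3$. Only under this extra condition does the subsequent chain of blow-ups (along $\tilde\gamma$, along the tangency section $\psi\subset\Gamma$, then three times along $F$, with exceptional surfaces $\mathbb F_2$, $\mathbb F_1$, $\mathbb F_0$) produce limits smooth at $p$ with contact order exactly $3$; for your $\mathcal C$ the statement you want is simply false, so no amount of careful tracking of multiplicities can rescue the argument.

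A second, independent gap is the generality step. In Lemma \ref{tacnodo} it suffices to vary $p\in R$ and $q\in F$, because for fixed $(p,H_q)$ the limits fill a divisor $\mathcal D_q$ inside $\mathcal T(n)_p$. Here, for fixed $(p,H_q)$, the constructed limits fill only the codimension-$4$ subvariety $F(d,n)_{p,H_q}\subset F(d,n)$, and varying $p$ and $q$ recovers just two of those four codimensions. The paper closes this in Steps 6 and 7 by an argument with no analogue in Lemma \ref{tacnodo}: it identifies $|\mathcal C|_T|$ with a pencil $\mathcal F_{\gamma,\psi}$ of cuspidal cubics with a flex at $F\cap T$, shows that each locus $\mathcal V_{\Gamma,\Psi}$ has dimension $\dim|\mathcal O_{\mathcal S_t}(nH)|-5$, and proves that distinct choices of the section $\psi$ (the cuspidal-tangent datum) yield \emph{distinct} such loci, so that the union over $\psi$, $q$ and $p$ is dense in $F(d,n)$. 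Your outline has no counterpart of this step, and cannot have one: without the point blow-up at $q$ there is no $\mathbb P^2$ on which to run the cubic-pencil analysis and no parameter $\psi$ to vary. (A minor point: to exclude local sections through the general point of $F(d,n)$ you should invoke Lemma \ref{molteplicita1cuspide} and its companion for the cuspidal case, as the paper does, rather than Lemma \ref{uno} and Corollary \ref{nodosupuntodoppio}, which concern one-nodal curves.)
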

\begin{proof}
\textit{Step 1}. Let $\mathcal X$ be the normalization of the double
cover of $\mathcal S\to\mathbb A^1$ ramified at the special fibre
\begin{displaymath}
\xymatrix{\mathcal X\ar@/_/[ddr]\ar[dr]\ar@/^/[drrr]_f& & &\\
& \mathcal \mathcal S^\prime \ar[d]\ar[r]_h &
\mathcal S\ar[d]\ar[r]& \mathbb P^3\\
& \mathbb A^1\ar[r]^{\nu_2}&\mathbb A^1 & }
\end{displaymath}
as in the proof of lemma \ref{tacnodo}. If $U$ is an analytic
neighborhood of $0\in \mathbb A^1$ small enough, let $\gamma$ be a
section of the family $\mathcal X|_U$ intersecting the special fibre
$\mathcal X_0=A\cup \mathcal E\cup B$ at a point $q$ lying on a
general fibre $F$ of $\mathcal E$. Now, let $\mathcal Y$ be the
blowing-up of $\mathcal X$ at $q$, with new exceptional divisor
$T\simeq \mathbb P^2$. The special fibre of $\mathcal Y$ now is
$A\cup \mathcal E^\prime\cup B\cup T$, where $\mathcal E^\prime$ is
the blowing-up of $\mathcal E$ at $q$, $F^2_{\mathcal E^\prime}=-1$,
$T$ intersects $\mathcal E^\prime$ along a curve which is a line on
$T$ and a $-1$-curve on $\mathcal E^\prime$ and it has not
intersections with $A$ and $B$. The proper transform
$\tilde{\gamma}$ of $\gamma$ now will intersect $T$ at a general
point $q_0$. Let us consider a general effective divisor $\mathcal
C\subset\mathcal Y|_U$ linearly equivalent to $nH-3T$ and with
cuspidal singularities along $\tilde\gamma$. Notice that such a
divisor $\mathcal C$ exists and its general fibre is an irreducible
curve with only a cusp as singularity at $q_t$. Now, if
$(d,n)=(2,2)$, then $(nH-3T)|_{\mathcal E^\prime}$ is not effective.
This implies that $\mathcal E^\prime$ is contained in the base locus
of $|nH-3T|$ and, if $C_A\cup C_B\subset A\cup B$ is the special
fibre of the image of $\mathcal C$ to $\mathcal S$, then $R= C_A\cap
C_B$. Finally, the point $[C_A\cup C_B]$ belongs to $F(2,2)$ but it
is not general in any irreducible component of $\mathcal V_0$.
Assume that $d,n\geq 2$ and $(d,n)\neq (2,2)$. Then
$(nH-3T)|_{\mathcal E^\prime}$ is an effective divisor containing
$F$ with multiplicity at least three. Moreover, $\mathcal C|_T$ will
be a cubic with at a least cusp at $q_0$.

\textit{Step 2}. Let $\mathcal Y^\prime$ be the blowing-up of
$\mathcal Y$ along $\tilde\gamma$ and denote by $\Gamma$ the new
exceptional divisor. The special fibre of $\mathcal Y^\prime$ is
$A\cup \mathcal E^\prime\cup B\cup T^\prime$, where $T^\prime$ is
the blowing-up of $T$ at $q_0$, with exceptional divisor
$E_0=T^\prime\cap \Gamma$. Moreover, notice that $\Gamma$ intersects
every fibre of $\mathcal Y^\prime$ along a $-1$-curve $E_t$. By the
hypothesis that $\mathcal C\subset\mathcal Y$ has cuspidal
singularities along $\tilde\gamma$, we have that the proper
transform $\mathcal C^\prime$ of $\mathcal C$ is linearly equivalent
to $nH-3T^\prime-2\Gamma$ and the general fibre ${\mathcal
C^\prime}_t$ of $\mathcal C^\prime$ intersects $E_t$ at only one
point $q_t^\prime$ and it smooth and tangent to $E_t$ at this smooth
point. The points $q_t^\prime$ determine a section $\psi$ of
$\mathcal Y^\prime$, contained in $\Gamma$, intersecting the special
fibre of $\mathcal Y^\prime$ at a general point $q_0^\prime\in
T^\prime\cap \Gamma$. If we blow-up $\mathcal Y^\prime$ along
$\psi$, we denote by $\mathcal Y^{\prime\prime}$ the obtained family
of surfaces and by $\Psi$ the new exceptional divisor, then the
proper transform $\mathcal C^{\prime\prime}$ of $\mathcal C^\prime$
will be linearly equivalent to $nH-3T^{\prime\prime}-2\Gamma-3\Psi$.
Now observe that $\mathcal C^{\prime\prime} F=-3T^{\prime\prime}
F=-3$, and hence $F\subset \mathcal C$. We set
$\alpha=mult_F(\mathcal C)=mult_F(\mathcal C^{\prime\prime})$.

\textit{Step 3}. Let $\mathcal Y^1$ be the blowing-up of $\mathcal
Y^{\prime\prime}$ along $F$. We denote by $\Theta_1$ the new
exceptional divisor. Now, the special fibre of $\mathcal Y^1$ is
given by $A_1+B_1+\Theta_1+\mathcal E^\prime+T_1$, where $A_1$,
$B_1$ and $T_1$ are the blow-ups of $A$, $B$ and $T^{\prime\prime}$
at $F\cap A$, $F\cap B$, and $F\cap T^{\prime\prime}$ respectively.
Then, by the triple point formula
$$
\Theta_1 (A_1+B_1+T_1+\mathcal E^\prime+\Theta_1)=0,$$ we find that
$F^2_{\Theta_1}=-2$ and hence $\Theta_1\simeq\mathbb F_2$. Moreover,
if $\mathcal C_1$ is the proper transform of $\mathcal
C^{\prime\prime}$, then, denoting by $F_{\Theta_1}$ the linear
equivalence class of the fibre of $\Theta_1$, we find that
$$
{\mathcal C_1}|_{\Theta_1}\sim  -3F_{\Theta_1}-\alpha
{\Theta_1}|_{\Theta_1} \sim -3F_{\Theta_1}+\alpha(3F_{\Theta_1}+F)
\sim (3\alpha-3)F_{\Theta_1}+\alpha F. $$ Now ${\mathcal
C_1}|_{\Theta_1}$ must be effective, so $\alpha\geq 1$. Moreover,
since $\mathcal C\subset \mathcal Y$ is general among divisors
linearly equivalent to $nH-3T$ and with cuspidal singularities along
$\tilde\gamma$, we may assume that $\alpha$ is the minimum integer
in order that ${\mathcal C_1}|_{\Theta_1}$ is effective, i.e.
$\alpha=mult_F(\mathcal C)=1$ and ${\mathcal C_1}|_{\Theta_1} =F$.
Again, notice that $F\subset \mathcal C_1$, because $F\, \mathcal
C_1=(-3T_1-\Theta_1)F=-3-(\Theta_1)^2 \mathcal E^\prime=-3+1=-2.$
Moreover, $\mathcal C_1$ must be smooth along $F$.

\textit{Step 4}. Let $\mathcal Y^2$ be the blowing-up along $F$ of
$\mathcal Y^1$. We denote by $\Theta_2$ the new exceptional divisor.
Now the special fibre $\mathcal Y^2_0$ of $\mathcal Y^2$ is
$$
\mathcal Y^2_0=A_2\cup \mathcal E^\prime\cup B_2\cup
T_2\cup\Theta_1\cup 2\Theta_2,$$ where $A_2$, $T_2$ and $B_2$ are
the blowing-up of $A_1$, $T_1$ and $B_1$ at $F\cap A_1$, $F\cap T_1$
and $F\cap B_1$ respectively. Now, by the triple point formula,
$F^2_{\Theta_2}=-1$ and $\Theta_2\simeq \mathbb F_1$. Moreover, if
$\mathcal C_2$ is the proper transform of $\mathcal C_1$, then
$\mathcal C_2|_{\Theta_2}\sim -2F_{\Theta_2}+\frac{1}{2}(3
F_{\Theta_2}+F+F_{\Theta_2}+F)$, where $F_{\Theta_2}$ is the linear
equivalence class of the fibre of $\Theta_2$. So, $\mathcal
C_2|_{\Theta_2}=F$.

\textit{Step 5.} Finally, let $\mathcal Y^3$ be the blowing-up of
$\mathcal Y^2$ along $F$ and let $\Theta_3\simeq \mathbb F_0$ be the
new exceptional divisor. The special fibre of $\mathcal Y^3$ now is
$$A_3+\mathcal E^\prime+T_3+B_3+\Theta_1+2
\Theta_2+3\Theta_3,$$ where $A_3$, $T_3$ and $B_3$ are the proper
transforms of $A_2$, $T_2$ and $B_2$. Moreover, if you denote by
$\mathcal C_3$ the proper transform of $\mathcal C_2$ then $\mathcal
C_3\sim nH-3T_3-2\Gamma-3\Psi-\Theta_1-2\Theta_2-3\Theta_3$ and the
linear system $|\mathcal C_3|_{\Theta_3}|=|F|$ is the ruling
determined by $F$ and it does not contain $F$ in its base locus.

Now, let $\mathcal D$ be the image of $\mathcal C\subset \mathcal
Y|_U$ into $\mathcal S|_{\nu_2(U)}$ and let $\mathcal D_0=2C_A\cup
2C_B$, where $ C_A=\mathcal C\cap A$ and $ C_{B}=\mathcal C\cap B$,
be the special fibre of $\mathcal D$. What we proved above shows
that \textit{the point $x=[ C_A\cup  C_{B}]\in F(d,n)\cap\mathcal
V_0$. In particular, the family $\mathcal D$ corresponds, into the
relative Hilbert Scheme $\mathcal H_n$, to an analytic local
bisection of $\mathcal V_{nH,1,0}^{\mathcal S}$ intersecting the
special fibre $\mathcal V_0$ at the point $x$ and the general fibre
$\mathcal V_t$ at two general points.}

\textit{Step 6}.\textit{ Now we want to prove that the point $x$ is
general in $F(d,n)$.} To this aim, recall that $F(d,n)\subset
|\mathcal O_{A\cup B}(n)|$ parametrizes curves cut out on $A\cup B$
by surfaces $S_n\subset \mathbb P^3$ intersecting $R$ with
multiplicity three at a general point. These surfaces are
parametrized by a codimension two subvariety $\mathcal F(n)\subset
|\mathcal O_{\mathbb P^3}(n)|$. Now, going back to the family of
surfaces $\mathcal X$ of Step 1, let $p$ be the point of $R$
corresponding to the fibre $F$ of $E$. As we already observed in
Lemma \ref{tacnodo}, $F$ can be identified with a double cover of
the projectivization of the fibre over $p$ of the normal bundle to
$R$ in $\mathbb P^3$. Equivalently, $F$ is a double cover of the
parameter space of planes of $\mathbb P^3$ containing the tangent
line $T_pR$ to $R$ at $p$. Let $H_q$ be the plane corresponding to
$q=\gamma\cap F$. Since the singular locus of the image of $\mathcal
C\subset \mathcal Y|_U$ into $\mathcal X|_U$ intersects $F$ at $q$,
then the special fibre $\mathcal C_0=\mathcal C\cap \mathcal Y_0$ of
$\mathcal C$ is cut out on $\mathcal Y_0$ by a surface $S_n$
intersecting $R$ at $p$ with multiplicity three and tangent to $H_p$
at $p$. \textit{In other words, if we denote by $F(d,n)_{p,H_q}$ the
codimension $4$ subvariety of $F(d,n)$, parametrizing curves cut out
by surfaces of $\mathbb P^3$ of degree $n$ intersecting with
multiplicity three $R$ at the point $p$ and tangent to the plane
$H_q$, then $x\in F(d,n)_{p,H_q}$.}

\textit{Step 7}. Now, let us denote by $D_{\Gamma,\Psi}$ the linear
equivalence class of the divisor
$3T_2+2\Gamma+3\Psi+\Theta_1+2\Theta_2+3\Theta_3\subset\mathcal Y^3$
and by $\mathcal Y^3_t$ the fibre of $\mathcal Y^3$ over $t\in
U\subset \mathbb A^1$. Then, by arguing as in the proof of lemma
\ref{uno}, we find that \textit{the dimension of the image}
$Im(r_0):=\mathcal W_{\Gamma,\Psi}$ \textit{of the map}
$$
r_0:H^0(\mathcal Y^3,\mathcal O_{\mathcal
Y^3}(nH-D_{\Gamma,\Psi}))\to H^0(\mathcal Y^3_0,\mathcal O_{\mathcal
Y^3_0}(nH-D_{\Gamma,\Psi}))
$$ \textit{is}

$$
h^0(\mathcal Y^3_t,\mathcal O_{\mathcal Y^3_t}(nH-D_{\Gamma,\Psi}))=
h^0(\mathcal Y^3_t,\mathcal O_{\mathcal Y^3_t}(nH-2\Gamma-3\Psi))=
h^0(\mathcal Y^3_t,\mathcal O_{\mathcal Y^3_t}(nH))-5.$$ \textit{We
want to prove that the family} $\mathcal V_{\Gamma,\Psi}\subset
|\mathcal O_{A\cup B}(nH)|$ \textit{of image divisors of divisors
in} $\mathcal W_{\Gamma,\Psi}$, \textit{with respect to the natural
morphism} $\mathcal Y^3\to\mathcal S$, \textit{has dimension}
$h^0(\mathcal Y_t,\mathcal O_{\mathcal Y_t}(nH))-5=h^0(\mathcal
S_t,\mathcal O_{\mathcal S_t}(nH))-5$. To this aim, notice that,
from what we have proved until now, at the Step 2, the restricted
linear system $|\mathcal C|_{T}|$ is the linear system $\mathcal
F_{\gamma,\psi}$ of cubics having a flex at the point $F\cap T$ and
a cusp, with cuspidal tangent line $R_\psi$ determined by the
section $\psi$ of $\Gamma$, at the point $\tilde\gamma\cap T$. So,
$dim(\mathcal F_{\gamma,\psi})\geq 1$. Actually, it is easy to show
that $\mathcal F_{\gamma,\psi}$ is a pencil whose all fibres are
irreducible and moreover, by using Proposition 2.1 of \cite{h}, one
can prove that, if $C_1$ and $C_2$ are two cubics of $\mathcal
F_{\gamma,\psi}$, then $C_1$ and $C_2$ intersect with multiplicity
exactly three at the point $F\cap T$. In particular, by using
notation of Step 5, the proper transforms $\tilde C_1$ and $\tilde
C_2$ of $C_1$ and $C_2$ to $T_3$ intersect the exceptional divisor
$\Theta_3\cap T_3$ at two different points $r_1$ and $r_2$. If $S_1$
and $S_2$ are two divisors in the family $\mathcal
W_{\Gamma,\Psi}\subset
|nH-(3T_3+2\Gamma+3\Psi+\Theta_1+2\Theta_2+3\Theta_3)|$ such that
$S_i|_{T_3}=\tilde C_i$, then the intersection points
$S_1|_{A_3}\cap \Theta_3$ and $S_2|_{A_3}\cap\Theta_3$ are different
and they are determined by $r_1$ and $r_2$. More precisely,
$S_i|_{A_3}\cap\Theta_3=F_{r_i}\cap A_3$, where $F_{r_i}$ is the
line of the ruling $|F|$ of $\Theta_3$, passing through $r_i$,
$i=1,\,2.$ We deduce, in particular, that there are not two divisors
in $\mathcal W_{\Gamma,\psi}$ restricting to the same divisor on $A$
and on $B$ and to two different cubics on $T_3$. \textit{So},
$\mathcal V_{\Gamma,\Psi}\subset F(d,n)\cap \mathcal V_0$
\textit{has dimension} $dim(\mathcal V_{\Gamma,\Psi})=dim(\mathcal
W_{\Gamma,\Psi})=dim(|\mathcal O_{\mathcal S_t}(nH)|)-5$.

Moreover, let us consider, at the Step 2, a general section $\psi_1$
of $\Gamma\subset\mathcal Y^\prime$, intersecting $T^\prime$ at a
point on $\Gamma\cap T^\prime$ different from $\psi\cap
T^\prime\in\Gamma\cap T^\prime$. \textit{We want to prove that}
$\mathcal V_{\gamma,\psi}$ \textit{and} $\mathcal V_{\gamma,\psi_1}$
\textit{are different subvarieties of} $F(d,n)\cap \mathcal
V_0\subset |\mathcal O_{A\cup B}(nH)|$. To this aim, let $\mathcal
Y^{\prime\prime}$ be the blowing-up of $\mathcal Y^\prime$ along
$\psi$ and $\psi_1$, let $\Psi$ and $\Psi_1$ the new exceptional
divisors, let us repeat all blow-ups of Steps 3, 4 and 5 and let us
use the same notation. Now, if $D$ and $D_1$ are two irreducible
cubics belonging respectively to the pencils $\mathcal
F_{\Gamma,\Psi}$ and $\mathcal F_{\Gamma,\Psi_1}$ on $T$, then $D$
and $D_1$ intersect with multiplicity $4$ at $\tilde\gamma\cap T$
and with multiplicity $m$, with $3\leq m\leq 5$ at the point $F\cap
T$. Moreover, for any cubic $C$ in the pencil $\mathcal
F_{\Gamma,\Psi}$, there exists only one cubic $C_1$ in the linear
system $\mathcal F_{\Gamma,\Psi_1}$, intersecting $C$ with
multiplicity at least $4$ in $F\cap T$. The proper transforms
$\tilde C$ and $\tilde{C_1}$ to $T_3$ of $C$ and $C_1$ will
intersect at a point $r$ of the exceptional divisor $\Theta_3\cap
T_3$ with multiplicity at most $2$. Let $S$ and $S_1$ be any two
divisors in $\mathcal Y^3$, belonging respectively to the linear
series $|nH-(3T_3+2\Gamma+3\Psi+\Theta_1+2\Theta_2+3\Theta_3)|$ and
$|nH-(3T_3+2\Gamma+3\Psi_1+\Theta_1+2\Theta_2+3\Theta_3)|,$ and such
that $S|_{T_3}=\tilde C$ and ${S_1}|_{T_3}=\tilde{C_1}$. We want to
prove that the curves $S\cap (A_3\cup B_3)$ and $S_1\cap (A_3\cup
B_3)$ can not be equal. Assume that $mult_r(\tilde C\cap
\tilde{C_1})=1$. Let $\mathcal Y_4$ be the blowing-up of $\mathcal
Y_3$ along the fibre $F_r$ of $\Theta_3$ passing through the point
$r$. (Notice that $F_r= S\cap \Theta_3=\Theta_3\cap S_1$). Let
$T_4$, $A_4$ and $B_4$ be the proper transforms of $T_3$, $A_3$ and
$B_3$ and let $\Theta_4$ be the new exceptional divisor. Now,
$\Theta_4$ is isomorphic to $\mathbb F_1$ and it is contained in the
special fibre $\mathcal Y_0^4$ of $\mathcal Y^4$ with multiplicity
$3$ and ${F_{r}}^2_{\Theta_4}=-1$. Moreover, if $S^\prime$ and
$S_1^\prime$ are the proper transform of $S$ and $S_1$ in $\mathcal
Y^4$ then
\begin{eqnarray*}
S^\prime|_{\Theta_4} & \sim &
(nH-3T_4-2\Gamma-3\Psi-\Theta_1-2\Theta_2-3\Theta_3-4\Theta_4)|_{\Theta_4}\\
& \sim &
-3T_4|_{\Theta_4}-3\Theta_3|_{\Theta_4}+\frac{4}{3}(3\Theta_3+T_4+A_4+B_4)|_{\Theta_4}\\
& \sim & H_{\Theta_4} \\
& \sim & S_1^\prime|_{\Theta_4},
\end{eqnarray*}
where $H_{\Theta_4}$ is the linear equivalence class of a line on
$\Theta_4$. The two lines $R=S^\prime\cap \Theta_4$ and
$R_1=S_1^\prime\cap\Theta_4$ intersect $T_4$ at two different point
by the hypothesis that $mult_r(\tilde C\cap \tilde{C_1})=1$ on
$T_3$. If $R$ and $R_1$ intersect $A_4$ at the same point, and hence
$S\cap A_3$ and $S_1\cap A_3$ are tangent at $F_r\cap A_3$, then $R$
and $R_1$ must intersect $B_4$ at two different points. In
particular, $S\cap B_3$ and $S^\prime\cap B_3$ intersect
transversally at $F_r\cap B_3$ and so they are different curves.
Assume now that $mult_r(\tilde C\cap\tilde{C_1})=2$ on $T_3$. Then,
when we blow-up along $F_r$, by using the same notation, the lines
$R=S^\prime\cap \Theta_4$ and $R_1=S_1^\prime\cap\Theta_4$ intersect
$T_4$ at the same point. If $R\neq R_1$ we find that $S\cap (A_3\cup
B_3)$ and $S_1\cap (A_3\cup B_3)$ intersect transversally at
$F_r\cap A_3$ and $F_r\cap B_3$ and, in particular, they are
different curves. If $R=R_1$ let $\mathcal Y_5$ be the blowing-up of
$\mathcal Y_4$ along $R$. The new exceptional divisor $\Theta_5$ is
still an $\mathbb F_1$ contained in the special fibre of $\mathcal
Y^5$ with multiplicity $6$. Moreover, the proper transforms
$S^{\prime\prime}$ and $S_1^{\prime\prime}$ of $S^\prime$ and
$S_1^\prime$ will intersect $\Theta_5$ along two lines $\tilde R$
and $\tilde R_1$ respectively. Now, $\tilde R$ and $\tilde R_1$ must
be different by the hypothesis that $mult_r(\tilde
C\cap\tilde{C_1})=2$ on $T_3$. This implies, in particular, that the
curves $S^{\prime\prime}\cap A_5\cup B_5$ and
$S_1^{\prime\prime}\cap A_5\cup B_5$ are different. \textit{This
proves that $\mathcal V_{\Gamma,\Psi}$ and $\mathcal
V_{\Gamma,\Psi_1}$ are two different subvarieties $F(d,n)\cap
\mathcal V_0$ of dimension $dim(|\mathcal O_{\mathcal
S_t}(nH)|)-5$.}

It follows that $F(d,n)\cap\mathcal V_0$ contains the codimension
$4$ subvariety $F(d,n)_{p,H_q}$ of $F(d,n)$. By using now the
generality of $q=\gamma\cap F$ in $F\subset \mathcal Y$ and the
generality of  the fibre $F$ on $\mathcal E$, we see that $F(d,n)$
is an irreducible component of $\mathcal V_0$.  Finally, the fact
that there are not local analytic sections of $\mathcal V_{nH,1,0}$
passing through the general element of $F(d,n)$ follows from lemma
\ref{molteplicita1cuspide}.
\end{proof}

\begin{lemma}\label{molteplicita2cuspide}
Assume $d,n\geq 2$. Let $V$ be an irreducible component of geometric
multiplicity $2$ of the special fibre $\mathcal V_0$ of $\mathcal
V^{\mathcal S}_{nH,1,0}$ whose general element $[D]$ corresponds to
a curve $D=D_A\cup D_B$, such that $D_B$ does not contain $E_i$, for
every $i\leq d(d-1)$.

\noindent Then $V=F(d,n)$. In particular, for $d=n=2$, there are not
irreducible components of $\mathcal V_0$ of geometric multiplicity
$2$.
\end{lemma}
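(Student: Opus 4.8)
The plan is to mimic \textit{Case 3} of the proof of Theorem \ref{conclusionenodo}, replacing the node by a cusp and the order-$m$ base change by the order-$2$ double cover used in Lemma \ref{flesso}. First I would reduce to the case in which $D_A$ and $D_B$ are both smooth. Indeed, if $D_A$ (resp. $D_B$) were singular, then $[D]$ would lie in $V^{A}_{nH,1,0}\times_{|\mathcal O_R(n)|}|\mathcal O_B(n)|$ (resp. in $|\mathcal O_A(n)|\times_{|\mathcal O_R(n)|}V^{B}_{nH,1,0}$); since these are irreducible components of $\mathcal V_0$ of geometric multiplicity $1$ and of the same dimension $\dim\mathcal V_t$, the component $V$ would coincide with one of them, contradicting the hypothesis that $V$ has geometric multiplicity $2$. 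Hence $D_A$ and $D_B$ are smooth and, as noted in the Remark preceding the definition of $F(d,n)$, the cusp of the general fibre $[D_t]$ of $\mathcal V_t$ must specialize to a point of $R=A\cap B$.

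Next, since $V$ has geometric multiplicity $2$, I would choose a general local bisection $C$ of $\mathcal V_{nH,1,0}^{\mathcal S}$ through $[D]$ and form, exactly as in Lemma \ref{flesso}, the normalized double cover $\mathcal X\to\mathcal S$ ramified along the special fibre, with $\mathcal X_0=A\cup\mathcal E\cup B$ and $\mathcal E\simeq\mathbb F_{d-1}$. One of the two divisors of $\mathcal X$ lying over the family parametrized by $C$, say $\mathcal C$, has irreducible one-cuspidal general fibre and is cuspidal along a section $\gamma$ of $\mathcal X$. The crucial localization step is to show that $\gamma$ meets $\mathcal X_0$ at a smooth point $q$ lying on a general fibre $F$ of $\mathcal E$ whose image $p\in R$ is not a base point of $\mathcal F$. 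This is where the hypothesis that $D_B$ contains no $E_i$ enters: arguing as in \textit{Case 3} of Theorem \ref{conclusionenodo}, if $\gamma$ met $\mathcal X_0$ along some $E_i$, or along a fibre of $\mathcal E$ whose image is a base point $E_i\cap R$, then blowing up $\mathcal X$ along $\gamma$ would give a proper transform $\mathcal C'\sim nH-2\Gamma$ (the cuspidal singularity having multiplicity $2$, just as a node does), whence $\mathcal C'\cdot E_i=-2$ would force $E_i\subset\mathcal C$ and therefore $E_i\subset D_B$, a contradiction. The smoothness of $D_A$ and $D_B$ rules out $q\in\mathcal E\cap A$ and $q\in\mathcal E\cap B$.

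With $\gamma$ so located, I would run the chain of blow-ups of Lemma \ref{flesso} (Steps 1--5): blow up $\mathcal X$ at $q$ to create $T\simeq\mathbb P^2$ on which $\mathcal C|_T$ is a cuspidal cubic, then blow up along $\gamma$ and along the section $\psi$ it determines inside $\Gamma$, and finally blow up three times along the proper transform of $F$. The triple-point-formula computations carried out there yield $\mathrm{mult}_F(\mathcal C)=1$ and, upon descending back to $\mathcal S$, show that $D_A=\mathcal C|_A$ and $D_B=\mathcal C|_B$ are smooth at $p$ and meet $R$ there with multiplicity exactly three. By the definition of $F(d,n)$ this means $[D]=[D_A\cup D_B]\in F(d,n)$, so $V\subseteq F(d,n)$. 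Since, for $(d,n)\neq(2,2)$, Lemma \ref{flesso} gives $\dim F(d,n)=\dim|\mathcal O_{A\cup B}(n)|-2=\dim\mathcal V_t=\dim V$ and both varieties are irreducible, it follows that $V=F(d,n)$. For $d=n=2$ the conclusion is immediate: Lemma \ref{flesso} shows that $F(2,2)$ has dimension strictly smaller than $\dim|\mathcal O_{A\cup B}(2)|-2$, so it is not an irreducible component of $\mathcal V_0$, and the localization argument above then leaves no room for any component of geometric multiplicity $2$ with general element avoiding the $E_i$.

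The step I expect to be the main obstacle is the precise bookkeeping in the blow-up tower that pins the contact order of $D_A$ and $D_B$ with $R$ to be exactly $3$, and not larger: one must check both that the cuspidal condition along $\gamma$ forces contact $\geq 3$ and that the generality of $C$, together with the minimality of $\mathrm{mult}_F(\mathcal C)$ used to make $\mathcal C_1|_{\Theta_1}$ effective, prevents the contact from jumping higher. A higher contact order would place $[D]$ in a proper closed subvariety of $F(d,n)$ and would break the dimension count that delivers the equality $V=F(d,n)$; hence the careful control of these multiplicities, precisely as engineered in the proof of Lemma \ref{flesso}, is the heart of the matter.
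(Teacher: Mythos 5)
Your outline does follow the paper's skeleton: the same double cover $\mathcal X\to\mathcal S$ totally ramified along the special fibre, the same localization of the singular section $\gamma$ onto a general fibre $F$ of $\mathcal E$ using the no-$E_i$ hypothesis together with Case 3 of Theorem \ref{conclusionenodo}, and the same concluding dimension count $\dim F(d,n)=\dim|\mathcal O_{A\cup B}(n)|-2=\dim V$. But there are two genuine gaps. First, your opening reduction rests on a false implication: if $D_A$ is singular it does not follow that $[D]\in V^{A}_{nH,1,0}\times_{|\mathcal O_R(n)|}|\mathcal O_B(n)|$. That component's general member has a \emph{cusp} on $A$ at a point off $R$, whereas the singularity of $D_A$ could perfectly well be a double point lying \emph{on} $R$; such a curve lies in neither multiplicity-one component (compare Lemma \ref{puntotriplo}, whose components have nodal $D_A$ on $R$ and geometric multiplicity $3$). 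So you have not reduced to the case $D_A$, $D_B$ smooth; you have only hidden the dangerous case.

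Second, and this is the heart of the matter: you run the tower of Lemma \ref{flesso} as if the divisor over your bisection had multiplicity $3$ at $q$, writing that ``$\mathcal C|_T$ is a cuspidal cubic.'' In Lemma \ref{flesso} the divisor is \emph{constructed} with $\mathcal C\sim nH-3T$; here it is \emph{given} by the component $V$, and the cuspidal condition along $\tilde\gamma$ only forces $m=\mathrm{mult}_q(\mathcal C)\geq 2$, because the double line $2R$ on $T\simeq\mathbb P^2$ (degree $2$, with degenerate tangent cone) is compatible with a cusp at $\tilde\gamma\cap T$. Your assertion that ``the cuspidal condition along $\gamma$ forces contact $\geq 3$'' is false exactly at $m=2$, and that is where the paper does its real work: its proof is a case analysis on $m$. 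For $m=2$ it shows, via one more blow-up along $F$, the Cartier condition, and the minimality of $\alpha$, that $S'|_T=2R$ with $R$ the line through $F\cap T$ and $\tilde\gamma\cap T$, and that $D_A$ and $D_B$ then acquire double points at $p\in R$ --- a condition of codimension at least $4-1=3$ in $|\mathcal O_{A\cup B}(n)|$, so $[D]$ cannot be general in any component. For $m\geq 4$ and $(d,n)\neq(2,2)$ the curve $D$ is cut out by a surface meeting $R$ with multiplicity $\geq 4$ at $p$, again codimension $\geq 3$; for $(d,n)=(2,2)$ and $m\geq 3$ one gets $R_1\subset D_A$, $R_2\subset D_B$. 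Only $m=3$ survives, and that case is the one settled by Lemma \ref{flesso}. Since your first-paragraph reduction is invalid, nothing in your argument excludes the $m=2$ degeneration, and the proposed proof has a hole precisely there.
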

\begin{proof}
Under the hypothesis $d,n\geq 2$, let $V$ be an irreducible
component of $\mathcal V_0$ as in the statement and let $[D=D_A\cup
D_B]$ its general element. Let
\begin{displaymath}
\xymatrix{ \mathcal X \ar[d]\ar[r]^h &
\mathcal S\ar[d]\\
\mathbb A^1\ar[r]^{\nu_2}&\mathbb A^1}
\end{displaymath}
be the smooth double cover of $\mathcal S$, totally ramified at the
special fibre, which we already described in Lemmas \ref{tacnodo}
and \ref{flesso}. Let $S\sim nH$ be a general divisor, such that
$S\cap \mathcal X_t$ is a general $1$-cuspidal curve on $\mathcal
X_t$ and such that $S\cap A=D_A$ and $S\cap B=D_B$. The singular
locus $\gamma$ of $S$ is a section of $\mathcal X$. By the
hypothesis that there are not local analytic sections of $\mathcal
V_{nH,1,0}$ passing through the general element of  $V$, we have
that $V\neq V^A_{nH,1,0}\times_{|\mathcal O_R(n)|}|\mathcal
O_{B}(n)|$ and $V\neq |\mathcal O_{A}(n)|\times_{|\mathcal
O_R(n)|}V^{B}_{nH,1,0}$, and so $\gamma\cap \mathcal X_0$ is not a
smooth point of $A\cup B$ lying on $A$ or $B\setminus
\cup_{i=1}^{d(d-1)} E_i$.

By the hypothesis that $D_B$ does not contain $E_i$, for every
$i$,by arguing as in Case 3 of Lemma \ref{conclusionenodo},  we have
that $\gamma$ does not intersect $\mathcal X_0$ at a point on $E_i$
or on the fibre $F_i$ of $\mathcal E$ passing through the point
$E_i\cap \mathcal E$.

Hence, $\gamma$ intersects $\mathcal X_0$ at a smooth point $q\in
F\subset \mathcal E$, where $F$ is any fibre of $\mathcal E$
different from $F_i$, for every $i$. As in the Step 1 of the
previous Lemma, let $\mathcal Y$ be the blowing-up of $\mathcal X$
at $q$ with exceptional divisor $T$ and let $\tilde\gamma$ be the
proper transform of $\gamma$. Now, the proper transform $S^\prime$
of $S$ is linearly equivalent to $nH-mT$, where $m$ is the
multiplicity of $S$ at $q$. Since $S^\prime|_T$ is a plane curve of
degree $m$ which must have at least a cusp at $\tilde\gamma\cap T$,
we have that $m\geq 2$.

If $d=n=2$ and $m\geq 3$ then $\mathcal E^\prime\subset nH-mT$. In
particular, $R_1\subset S|A=D_A$ and $R_2\subset S|_B=D_B$ and the
point $[D]$ is not general in any irreducible component of $\mathcal
V_0$.

Suppose that $m\geq 4$ and $(d,n)\neq (2,2)$. Then
$S^\prime|_{\mathcal E^\prime}$ contains $F$ with multiplicity
$m\geq 4$ and so $D_A\cup D_B$ is cut out on $A\cup B$ from a
surface of $\mathbb P^3$ intersecting $R=S^{d-1}\cap \pi$ with
multiplicity at least four at the point $p$ image of $F\subset
\mathcal E$. In particular, $[D_A\cup D_B]$ is general in a
subvariety $W\subset |\mathcal O_{A\cup B}(n)|$ of codimension at
least $4-1=3$ and it can not be general in any irreducible component
of $\mathcal V_0$.

If $m=3$ and $(d,n)\neq (2,2)$, then $V=F(d,n)$ by the previous
lemma.

Assume that $m=2$ and $d,n\geq 2$. Then, $S^\prime|_T=2R$, where $R$
is a line passing through the point $\tilde\gamma\cap T$. Now, by $F
S^\prime=-2$ we have that $S^\prime|_{\mathcal E^\prime}$ contains
$F$ with multiplicity $r\geq 2$. Since $S^\prime$ is a Cartier
divisor, $R$ is the line generated by $F\cap T$ and
$\tilde\gamma\cap T$ and $F$ is contained with multiplicity exactly
$2$ in the divisor $S^\prime|_{\mathcal E^\prime}$. Now, if
$\mathcal Y^{\prime\prime}$ is the blowing-up of $\mathcal Y^\prime$
along $F$, the proper transform $S^{\prime\prime}$ of $S^\prime$
restricts on the new exceptional divisor $\Theta_1\simeq \mathbb
F_2$ to an effective divisor linearly equivalent to
$$
-2F_{\Theta_1}+\alpha (3F_{\Theta_1}+F),
$$
where we may assume that $\alpha$ is the minimal integer in order
that $S^{\prime\prime}|_{\Theta_1}$ is effective and it intersects
with multiplicity two $T^\prime\cap \Theta_1$ at the point
$R^\prime\cap\Theta_1$, where $R^\prime$ and $T^\prime$ are the
proper transforms of $R$ and $T$. So $\alpha=2$ and $D_A$ and $D_B$
have a double point at the point $p\in R$ corresponding to the fibre
$F$. Also in this case $D_A\cup D_B$ is general in a subvariety
$W\subset |\mathcal O_{A\cup B}(n)|$ of codimension at least
$4-1=3$, and so $[D]=[D_A\cup D_B]$ can not be general in any
irreducible component of $\mathcal V_0$.
\end{proof}
 In order to describe the other irreducible components of
$\mathcal V_0\subset\mathcal V_{nH,1,0}$ we need to introduce some
notation. Let $S_A(d,n)=V^{A,R}_{nH,0,1}$ be the Zariski closure of
the locally closed set
$$
\{[D_A]|\,\,D_A\,\,\textrm{ has a node at a general point}\,\,
p\,\,\textrm{of}\,\,R\}\subset |\mathcal O_{A}(nH)|
$$
and let $S_{B}(d,n)=V^{B,R}_{nH,0,1}\subset |\mathcal O_{B}(n)|$ be
defined at the same way. Moreover, let $T_A(d,n)$ be the Zariski
closure of the locally closed set
$$
\{[D_A]|\,\,D_A\,\,\textrm{ is tangent to}\,\,R\,\,\textrm{at a
general point}\subset |\mathcal O_{A}(nH)|\}
$$
and let $T_{B}(d,n)\subset |\mathcal O_{B}(n)|$ be defined at the
same way.
\begin{lemma}\label{puntotriplo}
For every $d,n\geq 2$, we have that $S_A(d,n)\times_{|\mathcal
O_R(n)|}T_{B}(d,n)$ and $T_{A}(d,n)\times_{|\mathcal
O_R(n)|}S_{B}(d,n)$ are two irreducible components of the special
fibre $\mathcal V_0$ of $\mathcal V_{nH, 1,0}$ of geometric
multiplicity $3$.
\end{lemma}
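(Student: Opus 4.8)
The plan is to adapt the strategy of Lemma \ref{flesso}, replacing the double cover of $\mathcal S\to\mathbb A^1$ by a triple cover, the ``extra'' sheet being responsible for the jump of the geometric multiplicity from $2$ to $3$. The two asserted components are interchanged by the roles of $A$ and $B$, so I would carry out the argument for $V:=S_A(d,n)\times_{|\mathcal O_R(n)|}T_{B}(d,n)$ and obtain $T_A(d,n)\times_{|\mathcal O_R(n)|}S_{B}(d,n)$ in the same way. First I would record the dimension count: inside $|\mathcal O_{A\cup B}(n)|$ the locus where the divisor $\xi:=D_A|_R=D_{B}|_R$ acquires a point $p$ of multiplicity $\geq 2$ has codimension $1$; imposing moreover that $D_A$ have a node at $p$ (the tangency of $D_B$ to $R$ at $p$ being then the generic behaviour, since $\xi\geq 2p$ already forces $(D_B\cdot R)_p\geq 2$) is one further condition. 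Hence $\dim V=\dim|\mathcal O_{A\cup B}(n)|-2=\dim\mathcal V_t$, which is the dimension an irreducible component of $\mathcal V_0$ must have by Problem \ref{main}.

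To produce $V$ as a limit I would take the smooth triple cover $\mathcal X\to\mathcal S$ totally ramified along the special fibre, exactly as the order-$m$ cover of Case 3 of Theorem \ref{conclusionenodo} with $m=3$, so that $\mathcal X_0=A\cup\mathcal E_1\cup\mathcal E_2\cup B$ is a chain with $\mathcal E_1,\mathcal E_2$ two $\mathbb P^1$-bundles over curves isomorphic to $R$. I would then choose a section $\gamma$ of $\mathcal X|_U$ meeting a general fibre $F$ of $\mathcal E_1$ (meeting a general fibre of $\mathcal E_2$ produces the companion component $T_A\times_R S_B$), and a general divisor $\mathcal C\sim nH$ with cuspidal singularities along $\gamma$, whose general fibre is a $1$-cuspidal curve on $\mathcal X_t\cong\mathcal S_t$.

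The heart of the proof is the local analysis at $q=\gamma\cap F$, carried out by the same sequence of blow-ups as in Steps 1--5 of Lemma \ref{flesso} (blow up $q$, then $\tilde\gamma$, then repeatedly the proper transform of $F$, applying the triple-point formula at each stage to identify the exceptional surfaces and the class of the restriction of the proper transform of $\mathcal C$). The asymmetry of the chain --- $\mathcal E_1$ being adjacent to $A$ but two steps from $B$ --- is what breaks the symmetric $F(d,n)$ picture: I expect the computation to show that the limit curve $C_A\cup C_B$ on $A\cup B$ has $C_A$ acquiring a node at $p$ (the image of $F$) while $C_B$ becomes smooth and tangent to $R$ at $p$, so that $[C_A\cup C_B]\in V$. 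Determining exactly this distribution of the cusp, and checking that $p$ is a general point of $R$ and that no $E_i$ is forced into $C_B$ (so that Lemma \ref{molteplicita2cuspide} will apply below), is the main obstacle, and is the step most sensitive to the precise multiplicities produced by the blow-ups.

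Granting the local analysis, I would finish as in Lemma \ref{flesso}. A cohomology argument with the exact sequence of Lemma \ref{uno}, applied to $\mathcal O(nH-D_\Gamma)$ for the appropriate combination $D_\Gamma$ of exceptional divisors, shows that the relevant restriction map $r_0$ is surjective, so that the image family sweeps out a subvariety of $V$ of the expected dimension $\dim|\mathcal O_{\mathcal S_t}(n)|-2$; by the generality of $q$ in $F$ and of $F$ in $\mathcal E_1$ this forces $V$ to be an irreducible component of $\mathcal V_0$. Finally, for the geometric multiplicity: the image in $\mathcal S$ of $\mathcal C$ is a family $\mathcal D$ whose general fibre is the union of the three conjugate $1$-cuspidal curves and whose special fibre is $3(C_A\cup C_B)$, so it corresponds to a local $3$-multisection of $\mathcal V_{nH,1,0}^{\mathcal S}$ through $[C_A\cup C_B]$; hence the geometric multiplicity divides $3$. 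Since the general element of $V$ is singular along $R$ it lies on no component of geometric multiplicity $1$ (the general element of such a component being singular at a point of $\mathcal S_0$ away from $R$, by Lemma \ref{molteplicita1cuspide} and its companion), and since $V\neq F(d,n)$ while $D_B$ contains no $E_i$, Lemma \ref{molteplicita2cuspide} rules out geometric multiplicity $2$. Therefore the geometric multiplicity is exactly $3$.
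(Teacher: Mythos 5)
Your overall architecture coincides with the paper's own proof: the triple cover totally ramified along the special fibre, a cuspidal divisor whose singular section meets a general fibre of $\mathcal E_1$, a restriction-map dimension count for generality, the trisection giving geometric multiplicity at most $3$, and the exclusion of multiplicities $1$ and $2$ via Lemma \ref{molteplicita1cuspide}, the lemma following it, and Lemma \ref{molteplicita2cuspide} (this last part is exactly the paper's closing argument). But there are two genuine gaps. The first is the one you flag yourself: the local analysis, which is the heart of the lemma, is only conjectured, and the blow-up scheme you propose --- imitate Steps 1--5 of Lemma \ref{flesso}, i.e.\ blow up $q$, then $\tilde\gamma$, then \emph{repeatedly the proper transform of the single fibre} $F$ --- is not the right one here. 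In the triple cover there are \emph{two} fibres over $p$, namely $F_1\subset\mathcal E_1$ and $F_2\subset\mathcal E_2$, and the paper's computation blows up $\gamma$, then $F_1$ once, then $F_2$ once, using at each stage that the restriction of the proper transform to the special fibre is a Cartier divisor. The decisive outputs are the multiplicities $\mathrm{mult}_{F_1}\mathcal C^\prime=2$ and $\mathrm{mult}_{F_2}\mathcal C^{\prime\prime}=1$: the first makes the restriction to the exceptional $\mathbb F_1$ over $F_1$ a conic, cutting a degree-$2$ divisor on the exceptional curve of $A^\prime$ (this is the node of $C_A$); the second makes the restriction to the exceptional surface over $F_2$ equal to $F_2$ itself, meeting $B^\prime$ in a single reduced point (this is why $C_B$ is smooth with contact of order exactly $2$ with $R$, i.e.\ simply tangent). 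Without this computation nothing rules out the other a priori possible limits (node on the $B$ side, a tacnodal configuration, higher contact with $R$), so the asymmetry you correctly anticipate remains unproved.

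The second gap is quantitative and would make your generality argument fail as written. Surjectivity of $r_0$ for $nH-D_\Gamma$ (here $D_\Gamma=2\Gamma^\prime+3\Phi+2\Theta_1^{\prime\prime}+\Theta_2+2\Theta_3$) does not produce a codimension-$2$ family of curves on $A\cup B$: it produces the paper's $\mathcal W_{q_1}$, which has codimension $5$, because the construction fixes not only $p\in R$ and $q=q_0^1\in F_1$ but also the limit of the cuspidal tangent direction (the section $\psi$ of $\Gamma$, equivalently the point $q_1$). The component is then swept out by varying \emph{three} parameters --- $q_1$, $q_0^1$ and $p$ --- whereas you vary only two ($q$ in $F$, and $F$ in $\mathcal E_1$, the latter being the same as $p$ in $R$); so even granting your surjectivity claim you would only reach a codimension-$3$ locus. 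Moreover, for the three-parameter sweep to have the right dimension one needs the distinctness statements that occupy the paper's Steps 4--5: that $\mathcal W_{q_1}\to|\mathcal O_{A\cup B}(n)|$ is injective (distinct divisors upstairs could contract to the same curve downstairs), that different choices of $q_1$ give $g^1_2$'s on $\Theta_1^{\prime\prime}\cap A^\prime$ with different ramification, hence different images, and that sections through distinct points of $F_1$ cut curves on $B$ meeting with multiplicity exactly $2$ at $F_2\cap B$, hence different curves. A last, minor point: a trisection bounds the geometric multiplicity above by $3$; it does not show that it divides $3$ --- harmless here only because you exclude $2$ as well as $1$.
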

Before proving the lemma notice that $S_A(d,n)\times_{|\mathcal
O_R(n)|}T_{B}(d,n)$ parametrizes curves cut out on $A\cup B$ by
surfaces of degree $n$ in $\mathbb P^3$ tangent to $S^{d-1}$ and
transverse to $\pi$ at the general point $p$ of $R=S^{d-1}\cap \pi$.
In particular, its general element $[D]$ corresponds to a curve
$D_A\cup D_B$, such that $D_A$ is a one nodal curve, with the node
at a general point $p$ of $R$, intersecting transversally $R$
outside $p$ at points different from $E_i\cap R$, for every $i$ and
$D_B$ is a smooth curve tangent to $R$ at $p$ and such that $D_B\cap
R=D_A\cap R$. Similarly for $T_{A}(d,n)\times_{|\mathcal
O_R(n)|}S_{B}(d,n)$.
\begin{proof}
We prove the lemma  for $S_A(d,n)\times_{|\mathcal
O_R(n)|}T_{B}(d,n)$. The other case is the same if you substitute
$A$ with $B$. Let $p\in R=A\cap B\subset \mathcal S_0$ be a point
different from $E_i\cap R$, for every $i$. By using the notation of
Theorem \ref{conclusionenodo}, we denote by $\mathcal Y$ the
desingularization of the triple cover of $\mathcal S$, totally
ramified along its special fibre $\mathcal Y_0=A\cup \mathcal
E_1\cup \mathcal E_2\cup B$ and by $F_i$ the fibre of $\mathcal
E_i$, with $i=1,\,2$, over the point $p=(0,0,0,0)$ of $\mathcal S$.
Let now $U\subset \mathbb A^1$ be an analytic neighborhood of $0$
and let $\gamma\subset \mathcal Y|_U$ be a local section passing
through a general point $q_0^1$ of $F_1$.

\textit{Step 1.} Let $\tilde{\mathcal Y}$ be the blowing-up of
$\mathcal Y$ at the point $q_0^1=\gamma\cap \mathcal Y_0$, with new
exceptional divisor $T$ and with special fibre $\tilde{\mathcal
Y}_0$. Let we denote by $\tilde{\gamma}$ the proper transform of
$\gamma$ to $\tilde{\mathcal Y}$ and let $$\tilde{\mathcal
C}\subset\tilde{\mathcal Y}|_U$$ be a general divisor such that
$\tilde{\mathcal C}\sim nH-2T$ and
the general fibre of $\tilde{\mathcal C}$ is irreducible with a cusp
as singularity at the intersection point $p_t^1=\tilde{\gamma}\cap
\tilde{\mathcal Y}_t$. Notice that, since we are assuming $d,n\geq
2$, such a divisor exists. Which kind of singularities may appear in
the special fibre of $\tilde{\mathcal C}$? First of all we observe
that $\tilde{\mathcal C}|_T=2L$, where $L$ is the line generated by
the points $\tilde{\gamma}\cap T$ and $F_1\cap T$ and so
$\tilde{\mathcal C}|_{\tilde{\mathcal Y}_0}$ contains $F_1$ with
multiplicity exactly $2$. Now, if we recontract $T$, the image
$\mathcal C$ of $\tilde{\mathcal C}$ in $\mathcal Y$ is a family of
curves with cuspidal singularities along $\gamma$ and such that
$\mathcal C|_{\mathcal Y^0}$ contains $F_1$ with multiplicity
exactly $2$. Let $\mathcal Y^1$ be the blowing-up of $\mathcal Y$
along $\gamma$. If $\Gamma$ is the new exceptional divisor, then
$\Gamma$ is a $\mathbb P^1$-bundle over $ \gamma$ intersecting every
fibre $\mathcal Y^1_t$ of $\mathcal Y^1$ along a curve $E_t$ which
is the exceptional divisor on $\mathcal Y_t$ and a fibre on
$\Gamma$. The special fibre of $\mathcal Y^1$ is $\mathcal
Y^1_0=A\cup \mathcal E_1^\prime\cup \mathcal E_2\cup B$, where
$\mathcal E_1^\prime$ is the blowing-up of $\mathcal E_1$ at
$\gamma\cap \mathcal E_1$, with exceptional divisor $E_0$. By the
hypothesis that $\mathcal C\subset\mathcal Y$ has cuspidal
singularities along $\gamma$, we have that the proper transform
$\mathcal C^\prime$ of $\mathcal C$ in $\mathcal Y^1$ is linearly
equivalent to $nH-2\Gamma$ and the general fibre $\mathcal
C^\prime_t$ of $\mathcal C^\prime$ is tangent at $E_t$ at a smooth
point $q_t^1$. Since $\mathcal C^\prime|_{\mathcal Y_1^0}$ contains
$F_1$ with multiplicity exactly $2$, the limit point $q_0^1$ of
$q_t^1$ will be the intersection point of $E_0$ with $F_1$. We
denote be $\phi$ the section described by points $q_t^1$. Notice
that $\Gamma\cap {\mathcal C}^\prime=2\phi$ and moreover $\mathcal
C^\prime_1 F_1=-2$.

\textit{Step 2.} Let us set $\alpha_1=mult_{F_1}\mathcal C^\prime$
and let $\mathcal Y^2$ be the blowing-up of $\mathcal Y^1$ along
$F_1$. The special fibre of $\mathcal Y^2$ is $\mathcal
Y^2_0=A^\prime\cup \mathcal E_1^\prime\cup \mathcal
E_2^\prime\cup\Theta_1\cup B$, where $\Theta_1$ is the new
exceptional divisor and $A^\prime$ and $\mathcal E_2^\prime$ are the
blowing-up of $A$ and $\mathcal E_2$ at $A\cap F_1$ and $\mathcal
E_2\cap F_1$ respectively. By the triple point formula, the new
exceptional divisor $\Theta_1$ is an $\mathbb F_1$ with exceptional
divisor $F_1=\Theta_1\cap \mathcal E_1^\prime$. Now, since $\phi$
intersects $\mathcal E_1^\prime$ transversally in $\mathcal Y^1$ at
$q_0^1\in F_1$, the proper transform $\phi^\prime$ of $\phi$ in
$\mathcal Y^2$, contained in the proper transform $\Gamma^\prime$ of
$\Gamma$, must intersect $\Theta_1$ at a point $q_1$ lying on the
fibre $F_{\Theta_1,q_0^1}$ of $\Theta_1$, corresponding to the point
$q_0^1$. If $\mathcal C^{\prime\prime}$ is the proper transform of
$\mathcal C^\prime$, then $\mathcal C^{\prime\prime}|_{\Theta_1}$
must be an effective divisor passing through $q_1$ and tangent to
$F_{\Theta_1,q_0^1}$ at $q_1$. Finally, by generality, $\alpha_1$ is
the minimum integer such that $\mathcal
C^{\prime\prime}|_{\Theta_1}$ verifies these properties. Now, if
$F_{\Theta_1}$ is the linearly equivalence class of the fibre of
$\Theta_1$, then
\begin{eqnarray*}
\mathcal C^{\prime\prime}|_{\Theta_1} & \sim & -2 F_{\Theta
_1}+\alpha_1(A^\prime+\mathcal E_1^\prime+\mathcal E_2^\prime)|{\Theta_1}\\
 & \sim & (2\alpha_1-2)F_{\Theta_1}+ \alpha_1F_1.
 \end{eqnarray*}
 From what we observed before, we may assume $\alpha_1=2$ and hence
 $\mathcal C^{\prime\prime}|_{\Theta_1}$ is a conic tangent to $F_{\Theta_1,q_0^1}$ at
 $q_1$ and verifying one more property. Indeed, since $\mathcal
 C^{\prime\prime}\sim nH-2\Gamma^\prime-2\Theta_1$ we have that
 $\mathcal C^{\prime\prime}F_2=-2\Theta_1F_2=-2$, and in particular
 $F_2\subset \mathcal C^{\prime\prime}$. More precisely,
 $\mathcal C^{\prime\prime}|_{\mathcal E_2^\prime}$
 contains $F_2$ with multiplicity $2$, because,
 as we observed at the previous Step, $\mathcal C^\prime|_{\mathcal E_1^\prime}
 \subset\mathcal Y^1$ contains $F_1$
with multiplicity $2$ and $\mathcal C^\prime|_{\mathcal Y^2_0}$ is a
Cartier divisor.

\textit{Step 3.} In order to understand the type of singularity of
$\mathcal
 C^{\prime\prime}|_{B}$ at the point $F_2\cap B$, let $\mathcal
 Y^3$ be the blowing-up of $\mathcal Y^2$ along $F_2$. The special fibre
 of $\mathcal Y^3$ is now
 $$\mathcal Y^3_0=A^\prime+\mathcal E_1^\prime+\Theta_1^\prime+
 \mathcal E_2^\prime+B^\prime+\Theta_2,$$
 where $\Theta_2$
 is the new exceptional divisor and $B^\prime$ and
$\Theta_1^\prime$ are the blowing-up of $B$ and $\Theta_1$ at $B\cap
F_2$ and $\Theta_1\cap F_2$ respectively. Again, by the triple point
formula, $\Theta_2$ is isomorphic to $\mathbb F_1$ with exceptional
divisor $F_2=\Theta_2\cap \mathcal E_2^\prime$. Now, if we set
$\alpha_2=mult_{F_2}\mathcal
 C^{\prime\prime}$, then $\alpha_2$ is the minimum integer in order that $\mathcal
 C^{\prime\prime\prime}|_{\Theta_2}$ is effective. By arguing as
 before, we find that
 \begin{equation*}
C^{\prime\prime\prime}|_{\Theta_2}\sim
(2\alpha_2-2)F_{\Theta_2}+\alpha_2F_2,
 \end{equation*}
where $F_{\Theta_2}$ is the linearly equivalence class of the fibre
of $\Theta_2$. Thus $\alpha_2=1$ and
$C^{\prime\prime\prime}|_{\Theta_2}$ is equal to $F_2$ and, again
using that $C^{\prime\prime\prime}|_{\mathcal Y^3_0}$ is a Cartier
divisor, $C^{\prime\prime\prime}|_{B}$ must contain the point
$F_2\cap B$ with multiplicity $1$. In particular, we have that, at
the Step 2, the divisor $\mathcal C^{\prime\prime}|_{\Theta_1}$ is a
smooth conic tangent to the fibre $\Theta_1\cap \mathcal E_2$ at the
point $F_2\cap\Theta_1$ and to the fibre $F_{\Theta_1,q_0^1}$ at the
point $q_1$. \textit{So, the divisor $\mathcal C\subset\mathcal
Y|_U$ cuts on $A$ a curve $C_A={\mathcal C}|_A$ with a node at
$F_1\cap A$ and on $B$ a curve $C_B={\mathcal C}|_B$ which is smooth
and simply tangent to $R_2$ at the point $F_2\cap B$. Moreover, if
$[C]\in |\mathcal O_{A\cup B}(n)|$ is the point corresponding to the
curve $C=C_A\cup C_B$, then $[C]\in \mathcal V_0\cap
S_A(d,n)\times_{|\mathcal O_R(n)|}T_{B}(d,n)$ and there exists a
local analytic trisection of $\mathcal V^{\mathcal S}_{nH,1,0}$,
passing through $[C]$ and intersecting the general fibre of
$\mathcal V^{\mathcal S}_{nH,1,0}$ at three general points.}

\textit{Step 4}. \textit{We want to prove that $[C]$ is a general
point of $S_A(d,n)\times_{|\mathcal O_R(n)|}T_{B}(d,n)$.}

\noindent Let $\mathcal Y^4$ be the blowing-up of $\mathcal Y^3$
along $ F_2$ and $\phi^\prime$, with new exceptional divisors
$\Theta_3$ and $\Phi$ respectively. The special fibre of $\mathcal
Y^4$ is
$$\mathcal Y^4_0=A^\prime+\mathcal E_1^\prime+\mathcal
E_2^\prime+\Theta_1^{\prime\prime}+\Theta_2+2\Theta_3+B^{\prime\prime},$$
where $\Theta_1^{\prime\prime}$ and $B^{\prime\prime}$ are the
proper transforms of $\Theta_1^\prime$ and $B^\prime$ respectively.
By the triple point formula, $\Theta_3\simeq \mathbb F_0$ and $F_2$
is a line on $\Theta_3$. Moreover, if $\tilde{\mathcal C}$ is the
proper transform of $\mathcal C^{\prime\prime\prime}$ in $\mathcal
Y^4$, then $\tilde{\mathcal C}\sim
nH-2\Gamma^\prime-3\Phi-2\Theta_1^{\prime\prime}-\Theta_2-2\Theta_3$
and $\tilde{\mathcal C}|_{\Theta_3}\sim F_2$. Now, denoting by
$\mathcal Y^4_t$ the general fibre of $\mathcal Y^4$ and by
$D_{q_1}$ the linear equivalence class of the divisor
$2\Gamma^\prime+3\Phi+2\Theta_1^{\prime\prime}+\Theta_2+2\Theta_3$,
by arguing as in Lemma \ref{uno}, we see that the image $\mathcal
W_{q_1}$ of the restriction map
$$
r_0:|\mathcal O_{\mathcal Y^4}(nH-D_{q_1}))|\to |\mathcal
O_{\mathcal Y^4_0}(nH-D_{q_1}))|
$$
has dimension $dim(|\mathcal O_{\mathcal Y^4_t}(nH)|)-5$. Moreover,
you can easily verify that also the restriction map
$$
F:\mathcal W_{q_1}\to |\mathcal
O_{{\Theta_1}^{\prime\prime}}(nH-D_{q_1}))|
$$
is surjective and, hence, the pencil $|\mathcal
O_{{\Theta_1}^{\prime\prime}}(nH-D_{q_1})|$ cuts on the curve
$\Theta_1^{\prime\prime}\cap A^\prime$ a $g^1_2$, which we denote by
$\mathcal L_{q_1}$, whose ramification points are $F_1\cap A^\prime$
and $R_{q_1}$, where $R_{q_1}$ is the intersection point of the
fibre $\Theta_1^{\prime\prime}\cap A^\prime$ and the proper
transform $L_{q_1}$ on $\Theta_1^{\prime\prime}$ of the line on
$\Theta_1$ generated by the points $q_1$ and $F_2\cap \Theta_1$.

\textit{Step 5} Now, notice that, under the natural map $\mathcal
Y_0^4\to\mathcal S_0$ contracting all exceptional components of
$\mathcal Y^4_0$, the variety $\mathcal W_{q_1}$ is mapped
injectively to a  codimension $5$ subvariety $\mathcal V_{q_1}$ of
$|\mathcal O_{S_0}(n)|=|\mathcal O_{A\cup B}(n)|$. This follows from
the fact that there are not two divisors $S_1$ and $S_2$ in the
linear system $|\mathcal O_{\mathcal Y^4}(nH-D_{q_1})|$ cutting out
the same divisor on $A^\prime$ and $B^{\prime\prime}$ and two
different conics to $\Theta_1^{\prime\prime}$, because the conic cut
out by $S_i$ on $\Theta_1^{\prime\prime}$ determine the intersection
points $S_i\cap A^\prime\cap \Theta_1^{\prime\prime}$. Finally,
notice that $\mathcal V_{q_1}\subset\mathcal V_0\cap
S_A(d,n)\times_{|\mathcal O_R(n)|}T_{B}(d,n)$.

Now, if $p_1$ is another general point of the fibre $F_{\Theta_1,
q_0^1}$, corresponding to the intersection of $\Theta_1$ with
another general section $\psi_1$ of $\Gamma^\prime$, and, if blow-up
along $\psi_1$ and we consider the varieties $\mathcal W_{p_1}$ and
$\mathcal V_{p_1}$, then $\mathcal W_{p_1}\neq \mathcal W_{q_1}$ and
$\mathcal V_{p_1}\neq \mathcal V_{q_1}$. Indeed, by the previous
Step, the linear series $\mathcal L_{q_1}$ and $\mathcal L_{p_1}$
are different because they have different ramification points. So,
for every point $q_1\in F_{\Theta_1, q_0^1}$, the variety $\mathcal
V_{q_1}$ is contained in a codimension $4$ subvariety $\mathcal
V_{\gamma}$ of $|\mathcal O_{A\cup B}(n)|$, contained in $\mathcal
V_0\cap S_A(d,n)\times_{|\mathcal O_R(n)|}T_{B}(d,n)$ and
parameterizing all curves on $A\cup B$ which are image of curves cut
out on $\mathcal X_0=A\cup \mathcal E_1\cup \mathcal E_2\cup B$ by
divisors in the linear system $nH$ with cuspidal singularity along
$\gamma$.

Moreover, if $\beta$ is another section of $\mathcal Y$,
intersecting $\mathcal Y_0$ at a smooth point $x_0^1\in F_1\subset
\mathcal E_1$, with $x_0^1\neq q_0^1$, and we construct the  related
variety $\mathcal V_{\beta}\subset |\mathcal O_{A\cup B}(nH)|$ then
$\mathcal V_{\beta}\neq \mathcal V_{\gamma}$. To see this, let us
came back to Step 1. Let $S_\beta$ and $S_\gamma$ be two divisor in
the linear system $|\mathcal O_{\mathcal Y}(n)|$ with cuspidal
singularities along $\beta$ and $\gamma$, respectively. Let
$\mathcal Y^1$ be the blow-up of $\mathcal Y$ along $\beta$ and
$\gamma$ and after along $F_1$. We denote by $S_\beta^\prime$ and
$S_\gamma^\prime$ the proper transforms of $S_\beta$ and $S_\gamma$
on $\mathcal Y^1$ and again by $\Theta_1$ the exceptional divisor of
the blowing-up along $F_1$. We know that
$S_\beta^\prime|_{\Theta_1}=C_\beta$ and
$S_\gamma^\prime|_{\Theta_1}=C_\gamma$ are two irreducible conics
tangent to $F_2\cap \Theta_1$. Moreover,$C_\beta$ and $C_\gamma$
cannot coincide because they are tangent to two different fibres of
$\Theta_1$, but they can intersect $A^\prime$ at the same points,
where $A^\prime$ is again the proper transform of $A$ on $\mathcal
Y^1$. Assume that $C_\beta\cap A^\prime=C_\gamma\cap
A^\prime=\{r_1,r_2\}$. Then $C_\beta$ and $C_\gamma$ intersect with
multiplicity exactly $2$ at the point $F_2\cap \Theta_1$, by the
Bezout theorem. Now, since when we blow-up twice $F_2$, the last
exceptional divisor is isomorphic to $\mathbb F_0$ and the pull-back
of $S$ and $S^\prime$ will restrict to a line in $|F_2|$ on
$\Theta_3$, it follows that the curves
$S_\beta^\prime|_{B}=S_\beta|_{B}$ and
$S_\gamma^\prime|_{B}=S_\gamma|_B$ intersect with multiplicity
exactly two at the point $F_2\cap B$. In particular,
$S_\beta|_{B}\neq S_\gamma|_{B}$.

We have proved that the locus $\mathcal W_{F_1}$ parametrizing
curves on $\mathcal Y_0$, cut out by divisors in the linear system
$|\mathcal O_{\mathcal Y}(n)|$ with cuspidal singularities along a
section of $\mathcal Y$, intersecting $\mathcal Y_0$ at a smooth
point of $F_1$, has dimension $dim(\mathcal W_{F_1})=dim(|\mathcal
O_{\mathcal Y_t}(n)|)-3$ and it is mapped one to one to the variety
$\mathcal V_p\subset \mathcal V_0\cap S_A(d,n)\times_{|\mathcal
O_R(n)|}T_{B}(d,n)$ parametrizing divisors in $|\mathcal O_{A\cup
B}(n)|$ cut out by surfaces tangent to $A$ and transverse to $B$ at
$p$. By the generality of $p$ on $R$, we find that
$S_A(d,n)\times_{|\mathcal O_R(n)|}T_{B}(d,n)$ is an irreducible
component of $\mathcal V_0$. The fact that  there are not local
analytic sections or bisections of $\mathcal V^{\mathcal
S}_{nH,1,0}$ passing through the general element of
$S_A(d,n)\times_{|\mathcal O_R(n)|}T_{B}(d,n)$ follows by the
previous lemmas of this section.
\end{proof}
\begin{lemma}
Let $V$ be an irreducible component of $\mathcal V_0$ of geometric
multiplicity $m\geq 3$, whose general element $[D]$ corresponds to a
curve $D\subset A\cup B$ not containing $E_i$ for every $i$. Then
$V$ is equal to $S_A(d,n)\times_{|\mathcal O_R(n)|}T_B(d,n)$ or
$T_A(d,n)\times_{|\mathcal O_R(n)|}S_B(d,n)$.
\end{lemma}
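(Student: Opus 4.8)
The plan is to run the same base-change and blow-up machinery used in Lemmas \ref{flesso} and \ref{puntotriplo}, and then to close the argument with a codimension count. First I would take the general element $[D=D_A\cup D_B]$ of $V$ and, since the geometric multiplicity is $m\geq 3$, a general local $m$-multisection through $[D]$. Performing a base change of order $m$ and smoothing, exactly as in Case 3 of Theorem \ref{conclusionenodo}, I obtain a degree-$m$ cover $\mathcal X\to\mathbb A^1$ of $\mathcal S\to\mathbb A^1$, totally ramified along the special fibre, whose central fibre is the chain $\mathcal X_0=A\cup \mathcal E_1\cup\dots\cup\mathcal E_{m-1}\cup B$, together with a divisor $S\sim nH$ having cuspidal singularities along a section $\gamma$ and restricting to $D_A$ on $A$ and to $D_B$ on $B$. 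The singular locus of $S$ is the section $\gamma$, and everything will hinge on locating $q=\gamma\cap\mathcal X_0$.

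The first real step is to pin down the position of $q$. Because $m\geq 3$ there are no local sections of $\mathcal V_{nH,1,0}$ through $[D]$, so by Lemma \ref{molteplicita1cuspide} and the analogous statement for $V^A_{nH,1,0}\times_{|\mathcal O_R(n)|}|\mathcal O_B(n)|$, the point $q$ cannot be a smooth point of $A$ or of $B\setminus\cup_i E_i$. Using the hypothesis that $D_B$ contains no $E_i$ together with the propagation argument of Case 3 of Theorem \ref{conclusionenodo}, I would rule out $\gamma$ meeting any $E_i$ or any fibre of an $\mathcal E_j$ lying over a base point of $\mathcal F$. Hence $q$ lies on a general fibre $F$ of some $\mathcal E_i$, $1\leq i\leq m-1$, over a general point $p\in R$. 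Blowing up $\mathcal X$ at $q$ produces $T\simeq\mathbb P^2$ and a proper transform $S'\sim nH-\mu T$ with $\mu=\mathrm{mult}_q S\geq 2$, since $S'|_T$ is a plane curve of degree $\mu$ carrying at least a cusp at $\tilde\gamma\cap T$.

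The heart of the proof is then the local analysis, run uniformly in $m$ and $i$: I would repeatedly blow up along the proper transforms of $F$, propagating both toward $A$ and toward $B$ as in Steps 2--5 of Lemma \ref{puntotriplo}, and read off from the triple-point formula the multiplicities $\alpha_j$ with which the successive exceptional fibres occur. These multiplicities determine the intersection multiplicities of $D_A$ and of $D_B$ with $R$ at $p$, together with their singularity types there. I expect the bookkeeping to show that for $\mu=2$, with $\gamma$ adjacent to one end of the chain, the configuration is exactly a node at $p$ on the component to which $\gamma$ is one step closer and a simple tangency to $R$ at $p$ on the other component, whereas every other choice of $(\mu,i)$ forces $D_A$ or $D_B$ to meet $R$ at $p$ with strictly higher multiplicity, or to acquire an additional singular point there.

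Finally I would translate this into a dimension statement. The node-plus-tangency configuration cuts out in $|\mathcal O_{A\cup B}(n)|$ precisely $S_A(d,n)\times_{|\mathcal O_R(n)|}T_B(d,n)$ or $T_A(d,n)\times_{|\mathcal O_R(n)|}S_B(d,n)$, each of codimension $2$; by contrast, any other configuration places $[D]$ in a locus of codimension at least $3$, exactly as in the closing paragraphs of Lemma \ref{molteplicita2cuspide}, contradicting that $[D]$ is general in an irreducible component of $\mathcal V_0$. This simultaneously forces $m=3$ and $i\in\{1,m-1\}$, so that $V$ must be one of the two stated varieties. The main obstacle will be the uniform control, for all $m$ and all interior positions $i$, of the lower bounds on the tangency and singularity orders produced by a degenerating cusp spread across the chain; it is precisely in the cases $\mu=2$ with $i$ interior and $\mu\geq 3$ that the codimension estimate must be made sharp enough to exclude the appearance of any new component.
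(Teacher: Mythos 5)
Your overall route is the same as the paper's: an order-$m$ base change totally ramified over the central fibre, locating $q=\gamma\cap\mathcal X_0$ on a general fibre of some $\mathcal E_i$ (ruling out the $E_i$'s and the fibres over base points exactly as you say), a blow-up analysis propagating along the chain, and a closing dimension count; your expectations for the $\mu=2$ cases (end positions give node plus simple tangency, interior positions force singular points of both $D_A$ and $D_B$ at $p$) are precisely what the paper's Cases 1--3 establish. However, there is a genuine gap in the way you propose to close the argument. You claim that every configuration other than node-plus-tangency puts $[D]$ in a locus of codimension at least $3$. That is false for the contact-order-three configuration: if $\mu=3$ and $D_A$, $D_B$ are smooth at $p$ meeting $R$ there with multiplicity $3$, then $[D]$ lies in $F(d,n)$, which for $(d,n)\neq(2,2)$ has codimension exactly $2$ in $|\mathcal O_{A\cup B}(n)|$ and is itself an irreducible component of $\mathcal V_0$. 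No codimension count can eliminate it. The correct exclusion is by multiplicity, not dimension: if $[D]\in F(d,n)$ with $[D]$ general in $V$, then $V=F(d,n)$ (both are irreducible of the same dimension), and by Lemma \ref{flesso} the geometric multiplicity of $F(d,n)$ is $2$, contradicting the hypothesis $m\geq 3$. Note also that Lemma \ref{molteplicita2cuspide}, which you cite for this step, does not exclude this case by codimension either --- there the $\mu=3$ case is the one that \emph{produces} $F(d,n)$.

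This is exactly why the paper structures the proof differently from your sketch: before any blow-up it bounds the multiplicities in $D\cap R=\sum_i m_ip_i$ by $3$ (codimension), disposes of $m_i=3$ by the $F(d,n)$/multiplicity-$2$ argument just described, and only then runs the chain analysis under the standing assumption $m_i\leq 2$. That preliminary reduction is also what guarantees, after blowing up along the section $\gamma$ and then along $F_i$, that the restriction of the proper transform to the exceptional $\Theta_i$ is a conic --- the starting point of the propagation. A secondary weakness: the interior case, which you defer to ``bookkeeping,'' is where most of the paper's proof actually lives (the double line $2L_i$ on $\Theta_i$ and the successive blow-ups along $F_{i\pm1}$, $L_{i\pm1}$, \dots\ showing the singularity travels to both ends of the chain); your anticipated conclusion is correct, but as written it is asserted rather than proved.
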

\begin{proof}
Let $V$ be an irreducible component of $\mathcal V_0$ as in the
statement. By the generality of $[D]$ in $V$, if $\sum_im_ip_i$,
with $p_i\neq p_j$ if $i\neq j$, is the divisor cut out by $D$ on
$R$, then we have that $m_i\leq 3$ for every $i$. Indeed, if
$W\subset |\mathcal O_{A\cup B}(n)|$ is an irreducible component of
the locally closed set
$$
\{[C]|C=C_A\cup C_B\subset  |\mathcal O_{A\cup B}(n)|\,\textrm{such
that}\,C\cap R=\sum_i m_i p_i,\,\textrm{with}\,m_i\geq
4\,\textrm{for some}\,i \},
$$
them $dim(W)\leq dim( |\mathcal O_{A\cup B}(n)|)-4+1=dim( |\mathcal
O_{A\cup B}(n)|)-3$ and so $W$ cannot be an irreducible component of
$\mathcal V_0$. Moreover, if $D\cap R=\sum_im_ip_i$ and for some $i$
we have that $m_i=3$ then $D_A$ and $D_B$ must be both smooth at
$p_i$, because otherwise $dim(V)\leq dim( |\mathcal O_{A\cup
B}(n)|)-4+1$. So, in this case we have that $[D]\in F(d,n)$ and
hence $V=F(d,n)$ and $m=2$.

Hence, we may assume that, if $D\cap R=\sum_im_ip_i$, then $m_i\leq
2$ for every $i$. Now, let
\begin{displaymath}
\xymatrix{\mathcal X\ar@/_/[ddr]\ar[dr]\ar@/^/[drrr]_f& & &\\
& \mathcal \mathcal S^\prime \ar[d]\ar[r]_h &
\mathcal S\ar[d]\ar[r]& \mathbb P^3\\
& \mathbb A^1\ar[r]^{\nu_m}&\mathbb A^1 & }
\end{displaymath}
be the covering of order $m$ of $\mathcal S$ totally ramified at the
special fibre, which we already introduced in the Lemma
\ref{conclusionenodo}. By using the same notation as in Lemma
\ref{conclusionenodo}, let $\mathcal X_0=A\cup\dots\cup\mathcal
E_{i}\cup\dots\cup B$ be the special fibre of $\mathcal X$ and let
$\mathcal D\subset\mathcal X$ be a divisor, linearly equivalent to
$nH$, such that $\mathcal D\cap A=D_A$, $\mathcal D\cap B=D_B$ and
$\mathcal D\cap \mathcal X_t$ is a general $1$-cuspidal curve on the
fibre in $|\mathcal O_{\mathcal X_t}(n)|$. By the hypothesis that
$D_B$ does not contain any exceptional divisor $E_l$, by using the
argument of Lemma \ref{molteplicita2cuspide}, we have that the
singular locus $\gamma$ of $\mathcal D$ intersects $\mathcal X_0$ at
a point $q$ of $\mathcal E_{i}$ lying on a fibre $F_{i}$, whose
image point $p$ in $\mathcal S_0$ is not $E_l\cap R$, for every
$l\leq d(d-1)$. Now let $\mathcal X^1$ be the blowing-up of
$\mathcal X$ along $\gamma$ with exceptional divisor $\Gamma$ and
special fibre $\mathcal X_0^1=A\cup\dots\cup\mathcal
E_{i}^\prime\cup\dots\cup B$, where $\mathcal E_{i}^\prime$ is the
blowing-up of $\mathcal E_{i}$ at $q$. We denote by $\mathcal D^1$
the proper transform of $\mathcal D$ in $\mathcal X^1$. By the
hypothesis that, if $D\cap R=\sum_im_ip_i$, then $m_i\leq 2$ for
every $i$, the divisor $\mathcal D^1|_{\mathcal E_{i}^\prime}$
contains $F_{i}$ with multiplicity exactly $2$. This implies that
$\mathcal D^1\cap \Gamma=2\psi$, where $\psi$ is a section of
$\Gamma$ intersecting $\mathcal E_{i}$ at the point
$q^1=F_{i}\cap\Gamma$. If $\alpha_{i}=mult_{F_{i}}\mathcal D_1$ and
if $\mathcal X^2$ is the blowing-up of $ \mathcal X^1$ along
$F_{i}$, with new exceptional divisor $\Theta_{i}\simeq \mathbb
F_1$, then
$$
\mathcal D^2|_{\Theta_{i}}\sim
-2F_{\Theta_{i}}+\alpha_{i}(2F_{\Theta_i}+F_{i}),
$$
where $\mathcal D^2$ is the proper transform of $\mathcal D^1$ in
$\mathcal X^2$, $F_{\Theta_{i}}$ is the linear equivalence class of
the fibre of $\Theta_{i}$ and $F_{i}$ is the $(-1)$-curve on
$\Theta_{i}$. Now, if $\Gamma^\prime$ and $\psi^\prime$ are the
proper transforms of $\Gamma$ and $\psi$ to $\mathcal X^2$ and
$F_{\Theta_{i},q^1}=\Gamma^\prime\cap \Theta_{i}$, then $\mathcal
D^2|_{\Theta_{i}}$ must be an effective divisor intersecting
$F_{\Theta_{i},q^1}$ with multiplicity $2$ at the point
$\psi^\prime\cap\Theta_{i}$. So $\alpha_{i}=2$ and $\mathcal
D^2|_{\Theta_{i}}$ is a conic.

\textit{Case 1}. If $i=1$, i.e. $\mathcal E_{i}=\mathcal
E_{1}\subset\mathcal X_0$ is the $\mathbb P^1$-bundle intersecting
$A$, then the special fibre of $\mathcal X^2$ is $\mathcal
X^2_0=A^\prime\cup\mathcal E_{1}^\prime\cup\Theta_{1}\cup\dots\cup
B$, where $A^\prime$ is the blowing-up of $A$ at the point
$F_{1}\cap A$ with exceptional divisor $A^\prime\cap\Theta_{1}$.
From what we have proved above, we have that $\mathcal D^2$ cuts on
$ A^\prime\cap\Theta_{1}$ a divisor of degree $2$ and so $D_A\subset
A$ has a double point at $p=F_{1}\cap A$. Now, if $D_B\subset B$ is
smooth at the point $p$, then the point $[D_A\cup D_B]$ belongs to
$S_A(d,n)\times_{|\mathcal O_R(n)|}T_B(d,n)$, so
$V=S_A(d,n)\times_{|\mathcal O_R(n)|}T_B(d,n)$ and, by the previous
lemma, the geometric multiplicity $m$ of $V$ is $3$. If $D_B$ is
singular at $p$ then the point $[D]$ cannot be general in any
irreducible component $V$ of $\mathcal V_0$.

\textit{Case 2}. If $i=m-1$ then, by substituting $A$ with $B$ in
the previous case, we find that $V=T_A(d,n)\times_{|\mathcal
O_R(n)|}S_B(d,n)$ and $m=3$.

\textit{Case 3}. Assume that $m\geq 4$ and $i\geq 2$. Also in this
case, we will prove that at least one of the curves $D_A$ and $D_B$
is singular at $p$. We denote by $F_{i-1}$ the fibre of $\mathcal
E_{i-1}$ passing through $F_{i}\cap \mathcal E_{i-1}$ and so on,  in
such a way that $F_{1}\cup\dots\cup F_{m-1}$ is a connected chain of
fibres, with $F_{\alpha}\subset \mathcal E_{\alpha}$, contained in
$\mathcal D|_{\mathcal X_0}$ with multiplicity $2$ and whose image
in $\mathcal S_0$ is the point $p\in R$. Now, the conic $\mathcal
D^2|_{\Theta_{i}}$ must intersect with multiplicity $2$ the fibre
$F_{\Theta_{i},q^1}$ at the point $\psi^\prime\cap\Theta_{i}$, the
fibre $\mathcal E_{i+1}\cap \Theta_{i}$ at the point $F_{i+1}\cap
\Theta_{i}$ and the fibre $\mathcal E_{i-1}\cap \Theta_{i}$ at the
point $F_{i-1}\cap \Theta_{i}$. So the points
$\psi^\prime\cap\Theta_{i}$, $F_{i+1}\cap \Theta_{i}$ and
$F_{i-1}\cap \Theta_{i}$ belong to a line $L_i$ and $\mathcal
D^2|_{\Theta_{i}}=2L_i$. Now, let $\mathcal X^3$ be the blowing-up
of $\mathcal X^2$ along $F_{i-1}$, $F_{i+1}$ and $\psi^\prime$, with
exceptional divisors $\Theta_{i-1}$, $\Theta_{i+1}$ and $\Psi$. We
denote by $\Theta_{i}^\prime$ the proper transform of $\Theta_{i}$
in $\mathcal X^3$. Now, ${L_i^2}_{\Theta_{i}^\prime}=-2$. Moreover,
by repeating always the same argument, we see that
$\Theta_{i-1}\simeq\mathbb F_1\simeq\Theta_{i+1}$ and, denoting by
$\mathcal D^3$ the proper transform of $\mathcal D^2$ in $\mathcal
X^3$, we have that $\mathcal D^3\cap\Theta_{i+1}$ and $\mathcal
D^3\cap\Theta_{i-1}$ are two conics intersecting respectively the
fibres $\Theta_{i}^\prime\cap\Theta_{i+1}$ and
$\Theta_{i}^\prime\cap\Theta_{i-1}$ at the points
$L_i\cap\Theta_{i+1}$ and $L_i\cap\Theta_{i-1}$ with multiplicity
$2$. In particular, denoting by $\Gamma^{\prime\prime}$ the proper
transform of $\Gamma^\prime$ in $\mathcal X^3$ and by $E$ the
$(-1)$-curve $E=\Psi\cap\Theta_i^\prime$, we have that
$$
\mathcal D^3\sim
nH-2\Gamma^{\prime\prime}-3\Psi-2\Theta_{i}^\prime-2\Theta_{i-1}-2\Theta_{i+1},$$
$$\mathcal D^3|_{\Theta_{i}^\prime}=2L_i+E\,\,\,\textrm{and}$$
$$\mathcal D^3\,
L_i=-3+2(\Theta_{i-1}+\Theta_{i+1})L_i-4=(2L_i+E)L_i=-3.$$ So, if
$\mathcal X^4$ is the blowing-up of $\mathcal X^3$ along $L_i$ and
if we denote by $\Theta_{L_i}$ the new exceptional divisor, then
$\Theta_{L_i}\simeq\mathbb F_0$ and ${L_i}^2_{\Theta_{L_i}}=0$.
Moreover, denoting by $\mathcal D^4$ the proper transform of
$\mathcal D^3$ and by $|F_{\Theta_{L_i}}|$ the ruling of
$\Theta_{L_i}$ different from $|L_i|$, we find that
$$\mathcal
D^4|_{\Theta_{L_i}}\sim-3F_{\Theta_{L_i}}+\alpha_{L_i}
(2F_{\Theta_{L_i}}+L_i),$$ where $\alpha_{L_i}=mult_{L_i}\mathcal
D^3$. Since $\mathcal D^4|_{\Theta_{L_i}}$ must be an effective
divisor, we find that $\alpha_{L_i}=2$ and $\mathcal
D^4|_{\Theta_{L_i}}\sim F_{\Theta_{L_i}}+2\,L_i$. Now we want to
prove that $\mathcal D^4|_{\Theta_{L_i}}$ contains the fibre $F_E\in
|F_{\Theta_{L_i}}|$ passing through the point $E\cap \Theta_{L_i}$.
To see this, let $\tilde{\mathcal X^4}$ be the blowing-up of
$\mathcal X^4$ along $E$. If $\Theta_E$ is the new exceptional
divisor, then $\Theta_E\simeq \mathbb F_0$ and, denoting by
$\tilde{\mathcal D^4}$ the pull-back of $\mathcal D^4$ to $\mathcal
X^4$, then
$$
\tilde{\mathcal D^4}\sim
nH-2\Gamma^{\prime\prime}-3\Psi-2\Theta_{i}^\prime-2\Theta_{i-1}-2\Theta_{i+1}
-4\Theta_{L_i}^\prime-5\Theta_E,$$
where $\Theta_{L_i}^\prime$ is the proper transform of
$\Theta_{L_i}$ in $\tilde{\mathcal X^4}$. In particular,
$$F_E\,\tilde{\mathcal
D^4}=-4(\Theta_{L_i}^\prime)\,F_E-5\Theta_E\,F_E=+4\Theta_E\,F_E-5\Theta_E\,F_E=-1.$$
By using that $(F_E)^2_{\Theta_{L_i}}=-1$, we have that
$F_E\subset\tilde{\mathcal X^4}$ and so, recontracting $\Theta_E$,
$F_E$ is contained in $\mathcal D^4|_{\Theta_{L_i}}$. Now, the fact
that $\mathcal D^3$ is singular along $L_i$ implies that the two
conics $\mathcal D^3|_{\Theta_{i-1}}=C_{i-1}$ and $\mathcal
D^3|_{\Theta_{i+1}}=C_{i+1}$ are singular respectively at the points
$L_i\cap\Theta_{i-1}$ and $L_i\cap\Theta_{i+1}$. If $i=2$ it follows
that $D_A$ is singular at $p$ and, similarly, if $i=m-2$ then $D_B$
is singular at $p$. Assume now that $2<i<m-2$. Then
$C_{i-1}=2L_{i-1}$ and $C_{i+1}=2L_{i+1}$, where
$L_{i-1}\subset\Theta_{i-1}\subset\mathcal X^3$ is the line joining
$F_{i-2}\cap\Theta_{i-1}$ and $L_i\cap\Theta_{i-1}$ and, similarly,
$L_{i+1}\subset\Theta_{i+1}$ is the line joining
$F_{i+2}\cap\Theta_{i+1}$ and $L_i\cap\Theta_{i+1}$. We will prove
now that $\mathcal D^{3}$ is singular along $L_{i-1}$. By using the
same argument, you can verify that $\mathcal D^3$ is singular also
along $L_{i+1}$. First we observe that, by the equality $\mathcal
D^3|_{\Theta_{i-1}}=C_{i-1}=2L_{i-1}$, it follows that, in $\mathcal
X^4$, the points $L_{i-1}\cap\Theta_{L_i}$ and
$L_{i+1}\cap\Theta_{L_i}$ stay on the same line $F_{L_i}\in |L_i|$
of $\Theta_{L_i}\simeq \mathbb F_0$ and $\mathcal
D^4|_{\Theta_{L_i}}=2F_{L_i}+F_E$. Now let $\mathcal X^5$ be the
blowing-up of $\mathcal X^4$ along $F_{i-2}$ with new exceptional
divisor $\Theta_{i-2}\simeq\mathbb F_1$ and let $\mathcal D^5$ be
the proper transform of $\mathcal D^4$ in $\mathcal X^5$. If we
denote by $\Theta_{i-1}^\prime$ the proper transform of
$\Theta_{i-1}\subset\mathcal X^3$ in $\mathcal X^5$, we have that
$(F_{i-2})^2_{\Theta_{i-2}}=-1$ and
$(L_{i-1})_{\Theta_{i-1}^\prime}^2=-1$. Moreover, by using that
$F_{i-2}\,\mathcal D^4=-2$, we find that the restricted linear
system $\mathcal D^5|_{\Theta_{i-2}}$ is a conic, intersecting with
multiplicity $2$ the fibre $\Theta_{i-1}^\prime\cap\Theta_{i-2}$ at
$L_{i-i}\cap\Theta_{i-2}$. In particular,
$$
\mathcal D^5\sim
nH-2\Gamma^{\prime\prime}-3\Psi-2\Theta_{i}^\prime-2\Theta_{i-1}^\prime
-2\Theta_{i+1}-4\Theta_{L_i}-2\Theta_{i-2}$$
and $L_{i-1}\,\mathcal
D^5=(-2\Theta_{i-2}-4\Theta_{L_i}+2\Theta_{i-2}+2\Theta_{L_i})L_{i-1}=-2$.
It is enough to blow-up along $L_{i-1}$ to see that $\mathcal D^5$
is singular along $L_{i-1}$ and so $\mathcal D^5|_{\Theta_{i-2}}$ is
a conic singular at $L_{i-1}\cap \Theta_{i-2}$. If $i-2=1$ this
implies that $D_A$ is singular at $p$. If $i-2>1$, then $\mathcal
D^5|_{\Theta_{i-2}}=2L_{i-2}$, where $L_{i-2}$ is the line joining
$L_{i-1}\cap\Theta_{i-2}$ and $F_{i-3}\cap\Theta_{i-2}$. Now we will
prove that $\mathcal D^5$ is singular along $L_{i-2}$. To this aim,
let $\mathcal X^6$ be the blowing-up of $\mathcal X^5$ along
$L_{i-1}$ and $F_{i-3}$ with exceptional divisors
$\Theta_{L_{i-1}}\simeq\mathbb F_1$ and $\Theta_{i-3}\simeq\mathbb
F_1$. Now, denoting by $\mathcal D^6$ and $\Theta_{i-2}^\prime$ the
proper transform of $\mathcal D^5$ and $\Theta_{i-2}$ on $\mathcal
X^6$, we have that,
\begin{itemize}
\item $\mathcal D^6|_{\Theta_{L_i}}$ is the double line $F_{L_{i-1}}$
joining $F_{L_i}\cap\Theta_{L_{i-1}}$ and
$L_{i-2}\cap\Theta_{L_{i-1}}$;\\
\item $\mathcal D^6|_{\Theta_{i-3}}$ is a conic intersecting with
multiplicity $2$ the fibre $\Theta_{i-2}^\prime\cap\Theta_{i-3}$ at
the point $L_{i-2}\cap \Theta_{i-3}$;\\
\item $(L_{i-2})^2_{\Theta_{i-2}}=-1$.
\end{itemize}
In particular we find that
$$
\mathcal D^6\sim
nH-2\Gamma^{\prime\prime}-3\Psi-2\Theta_{i}^\prime-2\Theta_{i-1}^\prime-
2\Theta_{i+1}-4\Theta_{L_i}^\prime-2\Theta_{i-2}-4\Theta_{L_{i-1}}-2\Theta_{i-3}$$
and $L_{i-2}\,\mathcal
D^6=L_{i-2}\,(-2\Theta_{i-2}-4\Theta_{L_{i-1}}-2\Theta_{i-3})=-2-4+4=-2.$
Now it is enough to blowing-up along $L_{i-2}$ to see that $\mathcal
D^6$ is singular along $L_{i-2}$. If $i-3=1$ this implies that $D_A$
is singular at $p$. If $i-3>1$, then $\mathcal
D^6|_{\Theta_{i-3}}=2L_{i-3}$ where $L_{i-3}$ is the line joining
$F_{i-4}\cap \Theta_{i-3}$ and $F_{i-2}\cap\Theta_{i-3}$. Moreover,
by the same argument as used to prove that $\mathcal D_5$ is
singular along $L_{i-2}$, you can verify that $\mathcal D^6$ is
singular along $L_{i-3}$. By repeating this argument at the end you
prove that $\mathcal D$ is singular along $F_1$ and so $D_A$ has a
double point at $p$. At the same way, you can prove that also $D_B$
has a double at $p$. Hence, if $2<i<m-1$ both curves $D_A$ and $D_B$
are singular at $p$ and $[D]$ cannot be general in any irreducible
component of $\mathcal V_0$.
\end{proof}
\begin{corollary}\label{tacnodosupuntodoppio}
Let $\mathcal V^\mathcal S_{nH,k,\delta}$ be the Universal
Severi-Enriques Variety introduced in the first section. Let
$\mathcal V_0$ be the special fibre of $\mathcal V^\mathcal
S_{nH,k,\delta}$ and let $[D]\in\mathcal V_0$ be any point,
corresponding to a curve $D=D_A\cup D_B\subset A\cup B$. Assume that
$D_A$ and $D_B$ are smooth and simply tangent to $R=A\cap B$ at a
point $p$, with $p\neq E_i\cap R$, for every $i$. Assume that $[D]$
is a point of geometric multiplicity $m$ of $\mathcal V_0$. Let
$U\subset\mathbb A^1$ be an analytic neighborhood small enough of
$0\in\mathbb A^1$ and let $\Delta$ be a general local
$m$-multisection of $ \mathcal V_{nH,k,\delta}$ passing through
$[D]$. Denote by $\mathcal D\to \Delta$ the family of curves
naturally parametrized by $\Delta$ and by $\mathcal D_t$ the general
fibre of $\mathcal D$, with irreducible components $\mathcal
D_t^1,\dots,\mathcal D^m_t$. Then, the point $p$ is not a limit of
any cusp of $\mathcal D^i_t$, for every $i=1,\dots,m$.
\end{corollary}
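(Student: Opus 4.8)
The plan is to argue by contradiction, adapting to cusps the proof of Corollary~\ref{nodosupuntodoppio} and invoking the blow-up computations of Lemma~\ref{molteplicita2cuspide} and of the preceding lemma in place of Case~3 of Lemma~\ref{conclusionenodo}. Since the assertion is local at $p$, the extra $k-1$ cusps and $\delta$ nodes carried by the curves $\mathcal D_t^i$ play no role, and I may reason exactly as in the case $(k,\delta)=(1,0)$. So suppose that $p$ is the limit of a cusp of one of the components, say $\mathcal D_t^{i_0}$. Making the order $m$ base change $\mathcal X\to\mathbb A^1$ totally ramified along its special fibre, with $\mathcal X_0=A\cup\mathcal E_1\cup\dots\cup\mathcal E_{m-1}\cup B$, the component $\mathcal D_t^{i_0}$ is cut out by a divisor $\mathcal D\sim nH$ on $\mathcal X$ which is cuspidal along a section $\gamma$ of $\mathcal X$; the hypothesised cusp then forces $q:=\gamma\cap\mathcal X_0$ to map to $p$ under $\mathcal X_0\to\mathcal S_0$.

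First I would pin down the position of $q$. Because $D_A$ and $D_B$ are smooth, the curve $D=D_A\cup D_B$ contains no exceptional divisor $E_l$; hence, exactly as in Case~3 of Lemma~\ref{conclusionenodo}, $\gamma$ can meet neither an $E_l$ nor a fibre lying over a point $E_l\cap R$. Moreover $\gamma$ cannot meet $A$ or $B$, for then $D_A$ or $D_B$ would itself be cuspidal at $q$, against smoothness. Thus $q$ lies in the interior of a fibre $F$ of some $\mathcal E_j$ whose image is $p$. Now the hypothesis that $D_A$ and $D_B$ are simply tangent to $R$ at $p$ says precisely that $D\cap R=2p+\cdots$, so that $F$, together with the whole chain of fibres over $p$, is contained in $\mathcal D|_{\mathcal X_0}$ with multiplicity exactly $2$. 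Running the nested blow-ups of the preceding lemma, blowing up $\gamma$ produces $\mathcal D'\sim nH-2\Gamma$ with $\mathcal D'\cap\Gamma=2\psi$ a double section (the cuspidal tangency), and blowing up the fibre over $p$ yields on the exceptional $\mathbb F_1$ a conic forced to be tangent to three rulings, hence a double line; propagating this double-line structure along the chain toward $A$ and $B$ exactly as in Cases~1--3 of the preceding lemma forces $D_A$, or $D_B$, to acquire a double point at $p$. When $m=2$ this is the conclusion of Lemma~\ref{molteplicita2cuspide}: the multiplicity of $\mathcal D$ at $q$ must be $2$, since multiplicity $3$ would place $[D]$ in $F(d,n)$ (Lemma~\ref{flesso}) with $D_A,D_B$ meeting $R$ at $p$ to order $3$, and higher multiplicity would force contact of order $\geq 4$, both incompatible with simple tangency. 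In every case we contradict the smoothness and simple tangency of $D_A$ and $D_B$ at $p$, so no cusp specialises to $p$.

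The main obstacle is to make the reduction to the preceding lemma uniform in $m$: one must be sure that the sole configuration compatible with $D\cap R=2p+\cdots$ is the one in which $\mathcal D|_{\mathcal X_0}$ carries the chain of fibres over $p$ with multiplicity exactly $2$, and then carry the double-line structure across the full chain $\mathcal E_1,\dots,\mathcal E_{m-1}$ down to $A$ and up to $B$ without losing control of the intermediate conics. This chain-propagation bookkeeping---identical to the iterated blow-up argument of Cases~1--3 of the preceding lemma---is the only genuinely delicate point; once it is in place, the contradiction with the hypothesis that $D_A$ and $D_B$ are smooth and simply tangent to $R$ at $p$ is immediate.
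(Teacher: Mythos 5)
Your proposal is correct and takes essentially the same route as the paper: the paper's own proof of Corollary \ref{tacnodosupuntodoppio} consists precisely of invoking the proof of Lemma \ref{molteplicita2cuspide} when $m=2$ and Cases 1, 2 and 3 of the preceding lemma when $m\geq 3$, which is exactly the reduction you perform. Your supporting observations---the locality of the statement at $p$, pinning down the cuspidal section via the argument of Case 3 of Lemma \ref{conclusionenodo}, and the fact that simple tangency forces the chain of fibres over $p$ to appear with multiplicity exactly $2$ so that the blow-up analysis yields outcomes (a double point on $D_A$ or $D_B$, or contact of order $\geq 3$ with $R$) incompatible with the hypotheses---are the same points those proofs establish.
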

\begin{proof}
This follows by the proof of Lemma \ref{molteplicita2cuspide} if
$m=2$ and Cases 1, 2 and 3 of the previous Lemma if $m\geq 3$.
\end{proof}
Now, we denote by $T_{E_i}(d,n)\subset |\mathcal O_{B}(n)|$ the
closure, in the Zariski topology, of the locally closed set

$$
\{[D_B]|\,D_{B}=E_i\cup D^\prime_B,\,\textrm{where}\,\,
D^\prime_B\sim nH-E_i\,\, \textrm{is smooth and passes through}\,\,
E_i\cap R\}
$$
where $E_1,\dots,\,E_{d(d-1)}$ are the exceptional divisors of $B$.
\begin{lemma}\label{puntobasecuspide}
The variety
$$T_A(d,n)\times_{|\mathcal O_R(n)|}T_{E_i}(d,n)$$ is an irreducible
component of the special fibre $\mathcal V^0$ of $\mathcal V_{nH,
1,0}$ of geometric multiplicity $3$ and it is the only irreducible
component of $\mathcal V_0$ whose general element corresponds to a
curve containing $E_i$, for every $i=1,\dots,d(d-1)$.
\end{lemma}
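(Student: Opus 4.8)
The plan is to prove the statement for $i=1$, the remaining cases being identical, by combining the triple cover technique used for the geometric multiplicity $3$ components of Lemma \ref{puntotriplo} with the analysis of the base point carried out in Remark \ref{nodopuntobase} and Lemma \ref{lemmaW_E_i}. First I would take the triple cover $\mathcal{X}\to\mathcal{S}$ totally ramified along the special fibre, with central fibre $\mathcal{X}_0=A\cup\mathcal{E}_1\cup\mathcal{E}_2\cup B$ as in Lemma \ref{puntotriplo}, and keep track of the exceptional divisor $E_1\subset B$, whose image under $f:\mathcal{X}\to\mathbb{P}^3$ is the base point $p_1$ of the pencil $\mathcal{F}$. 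Over $p_1$ one finds a connected chain of fibres $F_1\subset\mathcal{E}_1$, $F_2\subset\mathcal{E}_2$ together with $E_1\subset B$, and the local model $xy-tz=0$ of Remark \ref{nodopuntobase} produces, after blowing up over the base point, an exceptional structure relating $E_1$ to the planes $H_q\supset T_{p_1}R$. I would then choose a section $\gamma$ of $\mathcal{X}|_U$ meeting $\mathcal{X}_0$ at a general point $q$ lying over $p_1$ and corresponding to such a plane $H_q$, and a general divisor $\mathcal{D}\sim nH$ with cuspidal singularities along $\gamma$, so that its general fibre is a general $1$-cuspidal curve of $|nH_t|$ whose cusp tends to $p_1$.

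The technical core is the resolution of the limit. Blowing up $q$ first produces a $T\simeq\mathbb{P}^2$ carrying a cuspidal cubic exactly as in Step $1$ of Lemma \ref{flesso}; then, blowing up $\gamma$ and successively the fibres $F_1$, $F_2$ and the relevant lines, I would apply the triple point formula at each stage to identify the exceptional divisors as Hirzebruch surfaces $\mathbb{F}_k$ and to compute the multiplicities $\alpha=\mathrm{mult}_F(\mathcal{D})$ with which the various fibres occur in the proper transform of $\mathcal{D}$, precisely as in Steps $2$--$5$ of Lemmas \ref{flesso} and \ref{puntotriplo}. The outcome I expect is that $\mathcal{D}$ restricts to a curve $D_A=\mathcal{D}|_A$ which is smooth and tangent to $R$ at $p_1$, and to a curve $D_B=\mathcal{D}|_B$ in which $E_1$ appears as a component, the residual curve $D_B'$ being smooth and passing through $E_1\cap R=p_1$; the negative intersection $-2$ of the proper transform of $\mathcal{D}$ with $E_1$, forced by the cuspidal singularity along $\gamma$ as in Lemma \ref{lemmaW_E_i}, is what guarantees $E_1\subset D_B$. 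The gluing condition $D_A|_R=D_B|_R$ is then automatically compatible, both divisors having multiplicity $2$ at $p_1$. Thus the image of $\mathcal{D}$ in $\mathcal{S}$ produces a point $x=[D_A\cup D_B]\in\bigl(T_A(d,n)\times_{|\mathcal O_R(n)|}T_{E_1}(d,n)\bigr)\cap\mathcal{V}_0$ and, in the relative Hilbert scheme, a local analytic trisection of $\mathcal{V}_{nH,1,0}^{\mathcal{S}}$ meeting $\mathcal{V}_t$ in three general points, which bounds the geometric multiplicity above by $3$.

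To show that $x$ is general in $T_A(d,n)\times_{|\mathcal O_R(n)|}T_{E_1}(d,n)$ I would run the exact sequence and surjectivity argument of Lemma \ref{uno}: after the full tower of blow-ups, the restriction map $r_0$ on the appropriate twisted system $|\mathcal{O}(nH-D_{\Gamma,\dots})|$ is surjective of the expected dimension, so the constructed curves sweep out a subvariety of $\mathcal{V}_0$ of the correct codimension. As in Step $7$ of Lemma \ref{flesso} and Steps $4$--$5$ of Lemma \ref{puntotriplo}, I would then vary the plane $H_q$ and the auxiliary section, and check by a Bezout computation on the final $\mathbb{F}_0$ exceptional divisor that distinct choices yield distinct curves on $A\cup B$; this fills out $T_A(d,n)\times_{|\mathcal O_R(n)|}T_{E_1}(d,n)$ to its full codimension $2$, whence it is an irreducible component. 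That the geometric multiplicity is exactly $3$ follows by excluding local sections and bisections through the general element, which is done exactly as in Lemma \ref{puntotriplo} by invoking the previous lemmas of this section, namely Lemmas \ref{molteplicita1cuspide} and \ref{molteplicita2cuspide} and Corollary \ref{tacnodosupuntodoppio}, applying the same blow-up analysis to order $1$ and order $2$ base changes and finding that these cannot reproduce the tangency at the base point.

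Finally, for uniqueness, let $V$ be any component of $\mathcal{V}_0$ whose general element $[D=D_A\cup D_B]$ has $E_1\subset D_B$. Then $V$ lies in the irreducible divisor of $|\mathcal O_{A\cup B}(n)|$ of curves containing $E_1$, and by the multiplicity $1$ lemmas the cusp of the nearby general fibre cannot specialise to a smooth point of $A\cup B$, so it must tend to $p_1=E_1\cap R$. Repeating the blow-up analysis above then forces $D_A$ to be tangent to $R$ at $p_1$ and $D_B'$ to pass through $p_1$, that is $[D]\in T_A(d,n)\times_{|\mathcal O_R(n)|}T_{E_1}(d,n)$; since both varieties have dimension $\dim\mathcal{V}_0$, we conclude $V=T_A(d,n)\times_{|\mathcal O_R(n)|}T_{E_1}(d,n)$. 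The main obstacle I anticipate is precisely the combined bookkeeping of the second and third paragraphs: simultaneously resolving the cusp and the rational double point lying over the base point, while keeping every triple point formula computation and the codimension-filling genericity argument under control.
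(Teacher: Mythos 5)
Your overall architecture (triple cover, explicit trisection, exclusion of multiplicities $1$ and $2$, uniqueness) matches the paper's, but two of your steps would fail as written. First, the local model in your ``technical core'' is the wrong one. You propose to blow up the point $q$ and obtain $T\simeq\mathbb P^2$ carrying a cuspidal \emph{cubic} as in Step 1 of Lemma \ref{flesso}, i.e.\ a divisor of multiplicity $3$ at $q$. That is the model for the flex component $F(d,n)$: it forces the two halves of the limit curve to meet $R$ with multiplicity $3$ at the point below $q$, which is \emph{not} the generic structure of $T_A(d,n)\times_{|\mathcal O_R(n)|}T_{E_1}(d,n)$, where $D_A$ is merely tangent to $R$ at $p_1$ and $D'_B$ passes simply through $p_1$. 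In the paper the divisor $S\sim nH$ contains the fibres $F_1,F_2$ with multiplicity exactly $2$, its cusp section meets the chain over $p_1$ precisely on $F_2$, the fibre adjacent to $B$ (your ``general point lying over $p_1$'' leaves this unspecified, and the choice matters: $q\in E_1$ forces $2E_1\subset D_B$ and $q\in F_1$ forces $D_A$ singular at $p_1$, both configurations of too high codimension), and the decisive computation is that $S^2|_{\Theta_2}$ is a \emph{conic} which, after the blow-up along $E_1$ forced by $E_1\cdot S^2=-2$, must be tangent at $q_1=E_1\cap\Theta_2$ to a fixed line $r_1$ independent of all choices.

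Second, your genericity argument cannot close. The limit linear systems produced by this construction (the paper's $W_\gamma$, $U_\gamma$) have dimension $\dim|\mathcal O_{A\cup B}(n)|-5$, and the only continuous parameter available is $q\in F_2$: here $p_1$ is a \emph{fixed} base point, unlike the moving point $p\in R$ in Lemma \ref{puntotriplo}, and the auxiliary section $\psi$ is not free --- the pencil-of-conics computation shows its trace on $\Theta_2$ must be one of the two ramification points $x_q^1,x_q^2$, a discrete choice. Hence the union of all constructed limits has dimension at most $\dim|\mathcal O_{A\cup B}(n)|-4$, strictly smaller than $\dim\bigl(T_A(d,n)\times_{|\mathcal O_R(n)|}T_{E_1}(d,n)\bigr)=\dim|\mathcal O_{A\cup B}(n)|-2$; it does not ``fill out'' the variety, and no Bezout distinctness check can repair a dimension count. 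The paper closes the argument differently: it first proves the uniqueness statement (the singularity of $D$ at $E_1\cap R$ imposes at most two conditions, giving either a nodal configuration, excluded by the $m$-fold cover analysis, or the tangential one, whence any component containing $E_1$ generically equals $T_A\times_{|\mathcal O_R(n)|}T_{E_1}$), and then observes that $U_\gamma\subset\mathcal V_0$ must lie in \emph{some} irreducible component; since the earlier lemmas classify all components whose general element avoids the $E_i$, and $U_\gamma$ lies in none of them (the key point being that the cuspidal locus inside the constrained system has the same dimension as $W_\gamma$ but is not a linear system), that component is $T_A\times_{|\mathcal O_R(n)|}T_{E_1}$. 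Relatedly, you cannot exclude sections and bisections ``by invoking'' Lemmas \ref{molteplicita1cuspide}, \ref{molteplicita2cuspide} and Corollary \ref{tacnodosupuntodoppio}: all of them hypothesize curves not containing any $E_i$, or points $p\neq E_i\cap R$, which is exactly the case excluded here. One must run the order-$1$ and order-$2$ analysis at $E_1$ directly, as in Cases 1, 2.1 and 2.2 of the paper's proof, showing each scenario yields a curve cut out by a surface singular, or tangent to $A$ or $B$, at $p_1$, hence of codimension at least $3$; your parenthetical remark about applying the blow-up analysis to order $1$ and $2$ base changes is the right idea, but it is the entire content of that step, not a corollary of the quoted results.
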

\begin{proof}
Since the proof is the same for every $i$, we assume $i=1$.

Let $V$ be an irreducible component of $\mathcal V_0$, whose general
element $[D=D_A\cup D_B]$ corresponds to a curve $D_A\cup D_B$
containing $E_1$. First of all, we want to prove that
\begin{equation}\label{puntobasecuspideclaim1}
V\,\,\textit{has geometric multiplicity at least equal to}\,\,3.
\end{equation}

\textit{Case 1.} Assume that the geometric multiplicity of $V$ is
$1$. Then, if $\Delta$ is a general local analytic curve passing
through $[D]$ and $\mathcal D\to\Delta$ is the family of curves
naturally parametrized by $\Delta$, we have that the special fibre
of $\mathcal D$ is $\mathcal D_0=D_A\cup D_B$ and the singular locus
$\gamma$ of $\mathcal D$ intersects $E_1$ at a smooth point $q_1\in
E_1$ of $\mathcal S_0$. Now, let $\mathcal X$ be the blowing-up of
$\mathcal S$ along $E_1$ with exceptional divisor $\Theta_1 \simeq
\mathbb F_0$. The pull-back $\gamma^\prime$ of $\gamma$ to $\mathcal
Y$ now intersects $\Theta_1$ at a general point $q_1^\prime$.
Moreover, if $\mathcal D^\prime$ is the proper transform of
$\mathcal D$ in $\mathcal Y$, then $\mathcal D^\prime\sim
nH-\alpha_1\Theta_1$, where $\alpha_1=mult_{E_1}\mathcal D$ and
$\mathcal D^\prime $ has cuspidal singularities along
$\gamma^\prime$. Since $\mathcal D^\prime|_{\Theta_1}\sim
(nH-\alpha_1\Theta_1)|_{\Theta_1}\sim \alpha_1\,F_1 +\alpha_1\,F_2$,
where $|F_1|$ and $|F_2|$ are the two rulings of $\Theta_1$, we have
that the minimal $ \alpha_1$ such that $\mathcal
D^\prime|_{\Theta_1}$ as a cusp at $q_1^\prime$ is $\alpha_1=2$.
This implies that $D_A$ has a double point at $E_1\cap A$ and
$D_B=2E_1+D_B^\prime$, where $D_B^\prime\sim nH-2E_1$. So $D$ is cut
out on $A\cup B$ by a surface $S_n\subset \mathbb P^3$ singular at
the point $p_1$ corresponding to the exceptional divisor $E_1$. Thus
$[D]$ cannot be general in any irreducible component of $\mathcal
V_0$.

\textit{Case 2} Assume that $V$ has multiplicity two. Let $\mathcal
X$ be the normalization of the double covering of $\mathcal S$
totally ramified at its special fibre $\mathcal X_0=A\cup \mathcal
E\cup B$. The proper transform of $D$ on $\mathcal X_0$, which we
still denote by $D$ is the connected Cartier divisor which restricts
to $D_A$ on $A$, to $D_B$ on $B$ and to a union of fibres on
$\mathcal E$. Now, in $\mathcal X$, we can find divisors $\mathcal
D\sim nH$, such that $\mathcal D|_{\mathcal X_0}=\mathcal D_0=D$,
the general fibre $\mathcal D_t$ is a general one-cuspidal curve on
$\mathcal X_t$, and the singular locus of $\mathcal D$ is a section
$\gamma$ of $\mathcal X$ intersecting $\mathcal X_0$ at a smooth
point $q_1$ lying on $E_1$ or on the fibre $F_1$ of $\mathcal E$
intersecting $E_1$.

\textit{Case 2.1} Assume that $q_1\in E_1$. Let $\mathcal Y$ be the
blowing-up of $\mathcal X$ along $ E_1$, with new exceptional
divisor $\Theta_1\simeq\mathbb F_0$ and special fibre $A\cup
\mathcal E^\prime\cup\Theta_1\cup B$. Now, the pull-back of $F_1$ to
$\mathcal Y$, which we still denote by $F_1$, is a $(-1)$-curve
intersecting transversally $\Theta_1$, whereas the pull-back of
$\gamma$ is a curve $\gamma^\prime$ intersecting $\Theta_1$ at a
general point. Moreover, $F_1\,\mathcal
D^\prime=-\alpha\,\Theta_1\,F_1=-\alpha$, where
$\alpha=mult_{F_1}\mathcal D$ and $\mathcal D^\prime$ is the proper
transform of $\mathcal D$ on $\mathcal Y$. So $F_1\subset\mathcal
D^\prime$ and, denoting by $|H_1|$ and $|H_2|$ are the two rulings
of $\Theta_1$, we have that $\mathcal
D^\prime|_{\Theta_1}\sim\alpha\,(H_1+H_2)$ is an effective divisor
with a cusp at $\gamma^\prime\cap \Theta_1$ and intersecting with
multiplicity two $\Theta_1\cap \mathcal E^\prime$ at the point
$F_1\cap \Theta_1$. The minimal $\alpha$ such that these two
conditions are verified is $\alpha=2$ and so $\mathcal
D^\prime|_{B}$ intersects $E_1$ at two points and, by contracting
$\Theta_1$ on $E_1$, we find that $D_B=2E_1+D_B^\prime$, where
$D_B^\prime\sim nH-2E_1$. Moreover, by blowing-up twice $\mathcal Y$
along $F_1$, we see that $\mathcal D^\prime|_A=D_A$ is smooth and
tangent to $\mathcal E^\prime\cap A$ at $F_1\cap A$. So $D_A\cup
D_B$ is cut out by a surface in $\mathbb P^3$ tangent to $B$ at the
base point $p_1$ corresponding to $E_1$. It follows that $[D]$ is
general in a family of codimension at least $3$ in $|\mathcal
O_{A\cup B}(n)|$ and it cannot be general in any irreducible
component of $\mathcal V_0$.

\textit{Case 2.2} Assume that $q_1\in F_1$. Then, let $\mathcal Y$
be the blowing-up of $\mathcal X$ along $\gamma$ with exceptional
divisor $\Gamma$ and special fibre $A\cup \mathcal E^\prime\cup B$.
Now, $\mathcal D^\prime\cap F_1=-2\Gamma\,F_1=-2$. In particular, if
$\mathcal D^\prime$ is the proper transform of $\mathcal D$, then
$F_1$ is contained in $D^\prime|_{\mathcal E^\prime}$ with
multiplicity $m\geq 2$. If $m\geq 3$ then $D_A$ intersects $\mathcal
E^\prime$ with multiplicity $m\geq 3$ at $F_1\cap A$ and so $[D]$
cannot be general in any irreducible component of $\mathcal V_0$.
Hence $m=2$ and, if we blow-up $\mathcal Y$ along $F_1$ and we
denote by $\Theta_1$ the new exceptional divisor and by $\mathcal
D^{\prime\prime}$ the proper transform of $\mathcal D^\prime$, we
find that $\mathcal D^{\prime\prime}|_{\Theta_1}$ is a conic,
tangent to the fibre $\Gamma^\prime\cap\Theta_1$, where
$\Gamma^\prime$ is the pull-back of $\Gamma$. Moreover, since
$\mathcal D^{\prime\prime}\,E_1=-2$, we have that
$E_1\subset\mathcal D^{\prime\prime}$ and the conic $\mathcal
D^{\prime\prime}|_{\Theta_1}$ passes through $E_1\cap \Theta_1$. In
particular, recontracting $\Theta_1$, we find that $D_A$ has a
double point at $F_1\cap A$ whereas $D_B=E_1\cup D_B^\prime$, where
$D_B^\prime\sim nH-E_1$. So $D_A\cup D_B$ is cut out on $A\cup B$ by
a surface $S_n\subset\mathbb P^3$ tangent to $A$ at the point $p_1$
corresponding to the exceptional divisor $E_1$, and it cannot be
general in any irreducible component of $\mathcal V_0$.

\textit{Claim \eqref{puntobasecuspideclaim1} has been proved}. Now
we observe that, if $V$ is an irreducible component of $\mathcal
V_0$, whose general element $[D]$ corresponds to a curve $D=D_A\cup
D_B$ containing $E_1$, then the singularity of $D$ at the point
$E_1\cap R$ must impose at most two conditions to the linear system
$|\mathcal O_{A\cup B}(n)|$. So, we have one of the following two
cases:
\begin{enumerate}
\item $D$ has a node at $E_1\cap R$, in particular
$D_A$ meets transversally $R$ at $E_1\cap R$ and $D_B=E_1\cup
D_B^\prime$, where $D_B^\prime$ does not contain $E_1\cap
R$;\label{transverseintersection}
\item $D_A$ is tangent to $R$ at $E_1\cap R$ and $D_B=D_B^\prime\cup
E_1$, where $D_B^\prime\sim nH-E_1$ passes through $E_1\cap R$. In
particular, $V= T_A(d,n)\times_{|\mathcal O_R(n)|}T_{E_i}(d,n)$.
\end{enumerate}
\textit{We want to prove that the case
\eqref{transverseintersection} cannot occur.} To this aim let $m$ be
the geometric multiplicity of $V$ and let $\mathcal Y$ be the finite
covering of degree $m$ of $\mathcal S$ which we already introduced
in Lemma \ref{conclusionenodo}. Let $\mathcal Y_0=A\cup\mathcal
E_1\cup\dots\cup\mathcal E_{m-1}\cup B$ be its special fibre. We
denote by $F_1\cup\dots\cup F_m$ the connected chain of fibres such
that $F_i\subset \mathcal E_i$ and $F_{m-1}\cap B=E_1\cap R$. Let
$\mathcal D\sim nH$ be a general divisor in $\mathcal Y$ cutting out
$D_A$ on $A$ and $D_B$ on $B$ with cuspidal singularity along a
section $\gamma$ of $\mathcal Y$. We already know that $\gamma$ must
intersect $\mathcal Y_0$ at a smooth point $q$ lying on $E_1$ or on
$F_i$, for some $i$. If $q\in E_1$ then, arguing as in Case 2.1, we
see that $D_B$ contains $E_1$ with multiplicity $2$. If $q\in F_i$,
then $\mathcal D|_{\mathcal Y_0}$ contains every $F_i$ with
multiplicity $r\geq 2$ and so $D_A\cap R=D_B\cap R$ contains
$E_1\cap R$ with multiplicity $r\geq 2$. \textit{This prove that
case \eqref{transverseintersection} cannot occur and}
$V=T_A(d,n)\times_{|\mathcal O_R(n)|}T_{E_i}(d,n).$

\textit{Now, we will show that, actually,
\begin{equation}\label{puntobasecuspideclaim2}
T_A(d,n)\times_{|\mathcal O_R(n)|}T_{E_i}(d,n)\,\textrm{is an
irreducible component of}\,\, \mathcal V_0\,\,\textrm{of
multiplicity}\, \,3.
\end{equation}}
Assume $m=3$ and let $\mathcal Y$, $\mathcal D$ and
$F_i\subset\mathcal E_i$ be as before. We denote by $\gamma$ a
section of $\mathcal Y$ intersecting $\mathcal Y_0=A\cup\mathcal
E_1\cup\mathcal E_2\cup B$ at a smooth point $q\in F_2$.

\textit{Step 1.} Let $S\subset\mathcal Y$ be a general divisor in
$|\mathcal O_{\mathcal Y}(nH)|$ with cuspidal singularities along
$\gamma$ and such that $S$ contains the fibres $F_1$ and $F_2$ with
multiplicity exactly $2$. Let $\mathcal Y^1$ be the blowing up of
$\mathcal Y$ along $\gamma$ with new exceptional divisor $\Gamma$.
Denote by $S^1$ the proper transform of $S$ in $\mathcal Y^1$ and by
$\mathcal Y^1_0=A\cup \mathcal E_1\cup\mathcal E_2^\prime\cup B$ the
special fibre of $\mathcal Y^1$, where $\mathcal E_2^\prime$ is the
blowing up of $\mathcal E_2$ at $q$. Then $S^1\sim nH-2\Gamma$,
$S^1\,F_2=-2$ and, by the hypothesis that $S$ contains $F_2$ with
multiplicity exactly $2$, $S^1$ will be tangent to $\Gamma$ along a
smooth section $\psi$ of $\mathcal Y^1$, intersecting $\mathcal
Y^1_0$ at the point $F_2\cap\Gamma$, which we still denote by $q$.

\textit{Step 2.} Let $\mathcal Y^2$ be the blowing-up of $\mathcal
Y^1$ along $F_2$, with new exceptional divisor $\Theta_2$. Denote by
$\mathcal Y^2=A\cup \mathcal E_1^\prime\cup \mathcal E_2^\prime\cup
\Theta_2\cup B$ the special fibre of $\mathcal Y^2$, where $\mathcal
E_1^\prime$ is the blowing-up of $\mathcal E_1$ at $F_2\cap \mathcal
E_2$. By using that ${F_2}^2_{\mathcal E_2^\prime}=-1$, we have that
$\Theta_2\simeq\mathbb F_1$ and ${F_2}^2_{\Theta_2}=-1$. Moreover,
denoting by $S^2$ the proper transform of $S^1$ in $\mathcal Y^2$,
we have that
$$S^2|_{\Theta_2}\sim -2F_{\Theta_2}+\alpha (2F_{\Theta_2}+F_2),$$
where $F_{\Theta_2}$ is the linearly equivalence class of the fibre
of $\Theta_2$. Now, denoting by $\psi^\prime$ the proper transform
in $\mathcal Y^2$ of $\psi$ and by $q^\prime$ the intersection point
of $\psi^\prime$ with $\Theta^2$, we have that $S^2|_{\Theta_2}$ is
an effective divisor
\begin{enumerate}\label{primedue}
\item tangent to the
fibre $F_{q}\in |F_{\Theta_2}|$, passing through $q$, at the point $q^\prime$;\label{1}\\
\item intersecting the fibre $\mathcal E_{1}^\prime\cap\Theta_2$ with
multiplicity $2$ at the point $F_1\cap\Theta_2$.\label{2}
\end{enumerate}
So, $\alpha=2$, $S^2|_{\Theta_2}$ is a conic verifying \eqref{1} and
\eqref{2} and $S^2\sim nH-2\Gamma^\prime-2\Theta_2$, where
$\Gamma^\prime$ is the proper transform of $\Gamma$ in $\mathcal
Y^2$. Moreover, the pull-back of $E_1$ to $\mathcal Y^2$, which we
still denote by $E_1$, is a $(-2)$-curve transversally intersecting
$\Theta_2\cap B^\prime$ at a point $q_1$. So,
$E_1\,S^2=-2\Theta_2\,E_1=-2$, $E_1\subset S^2$ and the conic
$S^2|_{\Theta_2}$ passes through $q_1$. Now, if $\mathcal Y^3$ is
the blowing-up of $\mathcal Y^2$ along $E_1$, with new exceptional
divisor $\Theta_{E_1}$, then $(E_1)^2_{\Theta_{E_1}}=1$ and
$\Theta_{E_1}\simeq \mathbb F_1$. Moreover, the proper transform
$\Theta_2^\prime$ of $\Theta_2$ in $\mathcal Y^3$ is the blowing-up
of $\Theta_2$ at $q_1$ with new exceptional divisor
$\Theta_{E_1}\cap\Theta_2^\prime$. Finally, denoting by $S^3$ the
proper transform of $S^2$ in $\mathcal Y^3$, we have that
$$
S^3|_{\Theta_{E_1}}\sim -2F_{\Theta_{E_1}}+E_1+F_{\Theta_{E_1}}\sim
E_1-F_{\Theta_{E_1}},$$ where $F_{\Theta_{E_1}}$ is the linearly
equivalence class of a fibre of $\Theta_{E_1}$ and, as we already
observed, $E_1$ is a line. Hence,
$$
S^3|_{\Theta_{E_1}}\,\,\textrm{is the exceptional divisor
of}\,\,\Theta_{E_1}.$$ This implies that
\begin{itemize}
\item[(3)] $S^2|_{\Theta_2}$ is a conic verifying properties \eqref{1} and
\eqref{2} and tangent at $q_1$ to a fixed line $r_1\subset\Theta_2$
which does not depend on the sections $\gamma$ or $\psi$ and on the
divisor $S$.
\end{itemize}
Now, the family of conics on $\Theta_2$ tangent to $\mathcal
E^\prime_1\cap \Theta_2$ at $F_1\cap\Theta_2$ and to $r_1$ at $q_1$
is a pencil $\mathcal G$ cutting out on the fibre $F_{q}$ a $g_2^1$
with ramification points $x_{q}^1$ and $x_{q}^2$. It follows that
\begin{equation}\label{divisoresutheta2}
q^\prime=x_{q}^i\,\textrm{for
some}\,\,i=1,\,2\,\,\textrm{and}\,\,S^2|_{\Theta_2}=C_q^i,
\end{equation}
 where $C_q^i$ is the only conic of the pencil $\mathcal G$ tangent to
$F_q$ at $x_q^i$.  Finally, if
$\mathcal Y^4$ is the blowing-up of $\mathcal Y^3$ once along $\psi$
and twice along $F_1$, with new exceptional divisors $\Psi$,
$\Theta_1\simeq\mathbb F_1$ and $T_1\simeq\mathbb F_0$, then it is easy to see that
$$S^4\in |\mathcal O_{\mathcal
Y^4}(nH-2\Gamma^\prime-3\Psi-2\Theta_2^{\prime\prime}-\Theta_{E_1}-\Theta_1-2T_1)|,$$
where $S^4$ and $\Theta_2^{\prime\prime}$ are the proper transforms
of $S^3$ and $\Theta_2^\prime$ to $\mathcal Y^4$.  In particular,
$[D_A\cup D_B]\in T_A(d,n)\times_{|\mathcal O_R(n)|}T_{E_i}(d,n)$
and, more precisely, $S$ cuts on $A$ a curve $D_A$ smooth and
tangent to $R$ at $E_1\cap R$ and on $B$ a curve $D_B=D_B^\prime\cup
E_1$, with $D_B^\prime\sim nH-E_1$ passing through $E_1\cap R$ and
having fixed tangent direction at $E_1\cap R$.

Now, by using the notation above, what we proved implies that, if
$\gamma$ is a section of $\mathcal Y$ intersecting $F_2$ at a
general point $q$ and $\psi$ is a general section of
$\Gamma\subset\mathcal Y^1$ passing through $q=F_2\cap\Gamma$, by
denoting by $\mathcal Z^1$ the blowing up of $\mathcal Y^1$ along
$\psi$, we have that, for every divisor $S$ in the linear series
$|\mathcal O_{\mathcal Z^1}(nH-2\Gamma-3\Psi)|$, the restriction of
$S$ to the special fibre of $\mathcal Z^1$ contains the fibres $F_1$
and $F_2$ with multiplicity at least $2$. Moreover, always by using
the notation above, if $\psi\subset\Gamma\subset\mathcal Y^1$ is a
section such that the proper transform $\psi^\prime$ of $\psi$ in
$\mathcal Y^2$ intersects $\Theta_2$ at $x^1_q$ or $x^2_q$, if
$\mathcal Y_0^4$ is the special fibre of $\mathcal Y^4$ and $D$ is
the liner equivalence class of the divisor
$2\Gamma^\prime+3\Psi+2\Theta_2^{\prime\prime}+\Theta_{E_1}+\Theta_1+2T_1\subset\mathcal
Y^4,$ then \textit{the image }$W_{\gamma}$ \textit{of the
restriction map}
$$
r_0:H^0(\mathcal Y^4,\mathcal O_{\mathcal Y^4}(nH-D))\to
H^0(\mathcal Y^4_0,\mathcal O_{\mathcal Y^4_0}(nH-D))
$$
\textit{is a linear system of dimension} $dim(|\mathcal O_{A\cup
B}(nH)|)-5.$ Now, since by \eqref{divisoresutheta2}, all divisors in
$|\mathcal O_{\mathcal Y^4}(nH-D)|$ restrict to the same divisor on
$\Theta^{\prime\prime}_2$ and $T_1$, the image
$$U_\gamma\subset T_A(d,n)\times_{|\mathcal
O_R(n)|}T_{E_i}(d,n)\cap\mathcal V_0\subset |\mathcal O_{A\cup
B}(n)|$$ of $W_{\gamma}$, through the natural morphism $|\mathcal
O_{\mathcal Y^4_0}(nH-D)|\to |\mathcal O_{A\cup B}(n)|$, has still
dimension $dim(|\mathcal O_{A\cup B}(nH)|)-5.$ Moreover, we stress
that, denoting by $\tilde{C_q^i}$ and $A^{\prime\prime}$ the proper
transforms of $C_q^i$ and $A$ to $\mathcal Y^4$, by using
\eqref{divisoresutheta2} and by using that $S^4|_{T^1}$ is the fibre
$F_{q,i}$ in the ruling $|F_1|$, we have that the general divisor in
the linear system $W_\gamma$  cuts $T_1\cap A^{\prime\prime}$at the
point $y_{q,i}=F_{q,i}\cap A^{\prime\prime}$ and
$\Theta_2^{\prime\prime}\cap B^\prime$ at the point
$x_{q,i}=\tilde{C^i_q}\cap B^\prime$, different from
$E_1\cap\Theta_2^{\prime\prime}$. Now, in order to prove that
$T_A(d,n)\times_{|\mathcal O_R(n)|}T_{E_i}(d,n))$ is an irreducible
component of $\mathcal V_0$, it is enough to prove that $U_\gamma$
is not contained in any irreducible component $V$ of $\mathcal V_0$
which we have found previously in this Section. We will prove this
only for $V=V^{A}_{nH,1,0}\times_{|\mathcal O_R(n)|}|\mathcal
O_{B}(n)|$. In the other cases, you can use a similar argument. To
prove that $U_\gamma$ is not contained in $V$ it is enough to prove
that the general element of $W_\gamma$ does not corresponds to a
curve with a cusp at a point of $A^{\prime\prime}$. To see this,
observe that, by using the notation of the introduction, by the
generality of $\pi\subset\mathbb P^3$, the point $p_1\subset
S^{d-1}$ corresponding to $E_1\subset B$ is a general point of
$S^{d-1}$ and the tangent line to $R$ at $p_1$ is a general tangent
line to $S^{d-1}$ at $p_1$. This implies that the family of curves
$\mathcal V$ with a cusp on $A^{\prime\prime}$ in the linear system
$|\mathcal O_{\mathcal Y^4_0}(nH-D)|$ is irreducible of dimension $
dim(|\mathcal O_{A\cup B}(n)|)-2-3$, but it is not a linear system,
so it cannot coincide with $W_\gamma$.
\end{proof} Previous Lemmas of this sections imply the
following theorem.
\begin{theorem}\label{theoremacuspide}
Let $\mathcal V^0$ be the special fibre of $\mathcal V_{nH, 1,0}$.
Assume that $n\geq 3$ and $d\geq 2$. Then, the irreducible
components of $\mathcal V_0$ are
\begin{itemize}
\item $V^A_{nH,1,0}\times_{|\mathcal O_R(n)|}|\mathcal O_{B}(n)|$
with geometric multiplicity $1$;\\
\item $ |\mathcal O_{A}(n)|\times_{|\mathcal O_R(n)|}V^{B}_{nH,1,0}$
with geometric multiplicity $1$;\\
\item $F(d,n)$ with
 geometric multiplicity $2$;\\
\item $S_A(d,n)\times_{|\mathcal O_R(n)|}T_B(d,n)$, $T_A(d,n)\times_{|\mathcal
O_R(n)|}S_B(d,n)$, $T_A(d,n)\times_{|\mathcal O_R(n)|}T_{E_i}(d,n)$,
for $i=1,\dots,\,d(d-1)$, with
 geometric multiplicity $3$.
\end{itemize}
If $n=2$ and $d\geq 3$ the description of $\mathcal V_0$ is as in
the previous case, except for the fact that, in this case, $
|\mathcal O_{A}(n)|\times_{|\mathcal O_R(n)|}V^{B}_{nH,1,0}$ does
not appear. Finally, if $d=n=2$ then the irreducible components of
$\mathcal V_0$ are $S_A(d,n)\times_{|\mathcal O_R(n)|}T_B(d,n)$,
$\,T_A(d,n)\times_{|\mathcal O_R(n)|}S_B(d,n)$,
$T_A(d,n)\times_{|\mathcal O_R(n)|}T_{E_1}(d,n)$,
$T_A(d,n)\times_{|\mathcal O_R(n)|}T_{E_2}(d,n)$, all with geometric
multiplicity $3$.
\end{theorem}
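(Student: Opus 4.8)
The plan is to assemble the theorem from the lemmas of this section by a single exhaustive case analysis on the general element of an arbitrary irreducible component of $\mathcal V_0$, using throughout that, by the Main Problem \ref{main}, every such component has dimension exactly $\dim(\mathcal V_t)=\dim|\mathcal O_{\mathcal S_t}(n)|-2$. First I would record the converse half, which is already done: each variety listed in the statement has been shown to be an irreducible component of $\mathcal V_0$ of the asserted geometric multiplicity. Indeed $|\mathcal O_{A}(n)|\times_{|\mathcal O_R(n)|}V^{B}_{nH,1,0}$ is handled by Lemma \ref{molteplicita1cuspide}, the product $V^{A}_{nH,1,0}\times_{|\mathcal O_R(n)|}|\mathcal O_{B}(n)|$ by the lemma immediately following it, $F(d,n)$ by Lemma \ref{flesso}, the two node--tangent products by Lemma \ref{puntotriplo}, and $T_A(d,n)\times_{|\mathcal O_R(n)|}T_{E_i}(d,n)$ by Lemma \ref{puntobasecuspide}. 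So the real content of the proof is \emph{completeness}: no other components occur.

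For completeness, let $V$ be any irreducible component of $\mathcal V_0$ and let $[D]$, with $D=D_A\cup D_B$, be its general element. I would split into two mutually exclusive cases according to whether $D$ contains some exceptional divisor $E_i$ of $B$. If it does, Lemma \ref{puntobasecuspide} applies directly and forces $V=T_A(d,n)\times_{|\mathcal O_R(n)|}T_{E_i}(d,n)$, of geometric multiplicity $3$; that same lemma also guarantees this is the unique component of this type. If $D$ contains no $E_i$, I would stratify further by the geometric multiplicity $m$ of $V$ (Definition \ref{geometricmultiplicity}). When $m=1$, the lemma following Lemma \ref{molteplicita1cuspide} identifies $V$ with either $V^{A}_{nH,1,0}\times_{|\mathcal O_R(n)|}|\mathcal O_{B}(n)|$ or $|\mathcal O_{A}(n)|\times_{|\mathcal O_R(n)|}V^{B}_{nH,1,0}$. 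When $m=2$, since in particular $D_B$ contains no $E_i$, Lemma \ref{molteplicita2cuspide} gives $V=F(d,n)$. When $m\geq 3$, the lemma immediately preceding Lemma \ref{puntobasecuspide} gives $V=S_A(d,n)\times_{|\mathcal O_R(n)|}T_B(d,n)$ or $T_A(d,n)\times_{|\mathcal O_R(n)|}S_B(d,n)$, again of geometric multiplicity $3$. Since $m$ is a positive integer, these subcases are exhaustive, so the list is complete.

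What remains is the bookkeeping of the degenerate values of $(d,n)$, and this is where I would be most careful. For $d\geq 2$ and $n\geq 3$ all six families are genuine components, with geometric multiplicities $1,1,2,3,3,3$ read off from the lemmas. For $(d,n)=(d,2)$ with $d\geq 3$ one must note that $|\mathcal O_{A}(n)|\times_{|\mathcal O_R(n)|}V^{B}_{nH,1,0}$ drops out, because Lemma \ref{molteplicita1cuspide} needs $n\geq 3$ for that product to attain the expected dimension $\dim|\mathcal O_{A\cup B}(n)|-2$; the remaining families persist, since their lemmas require only $(d,n)\neq(2,2)$. For $d=n=2$ both geometric--multiplicity--$1$ products disappear (one needs $n\geq 3$, the other $(d,n)\neq(2,2)$), $F(2,2)$ fails to be a component by Lemma \ref{flesso} (and then by Lemma \ref{molteplicita2cuspide} there are no geometric--multiplicity--$2$ components at all), while $S_A\times_{|\mathcal O_R(n)|}T_B$, $T_A\times_{|\mathcal O_R(n)|}S_B$ and the two divisors $T_A(d,n)\times_{|\mathcal O_R(n)|}T_{E_i}(d,n)$ with $i=1,2$ (note that $d(d-1)=2$) survive, all of geometric multiplicity $3$.

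The only genuinely delicate point, rather than a citation, is the exhaustiveness of the interior trichotomy: one must make sure no component escapes because its general element has $D$ not containing any $E_i$ yet $D_B$ containing some $E_i$, or because the limiting cusp lands on a point of $R$ where the imposed contact is higher than the lemmas allow. Both possibilities are ruled out by the dimension hypothesis, since any locus on which $D$ meets $R$ with multiplicity $\geq 4$ at a point, or on which $D_A\cup D_B$ acquires a double point on $R$, has codimension $\geq 3$ in $|\mathcal O_{A\cup B}(n)|$ and so cannot be a component. These are precisely the estimates already carried out inside Lemma \ref{molteplicita2cuspide} and its successor, so the classification is forced and the theorem follows.
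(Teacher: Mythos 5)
Your proposal is correct and takes essentially the same route as the paper, whose entire proof is the remark that the theorem follows from the preceding lemmas of the section: your explicit case analysis (first on whether $D$ contains some $E_i$, then on the geometric multiplicity $m=1$, $m=2$, $m\geq 3$, citing Lemma \ref{puntobasecuspide}, the two multiplicity-one lemmas, Lemma \ref{molteplicita2cuspide}, and the lemma preceding Lemma \ref{puntobasecuspide}) is precisely the assembly the paper leaves implicit. Your bookkeeping of the degenerate cases $(d,n)=(d,2)$ with $d\geq 3$ and $(d,n)=(2,2)$ likewise matches how the hypotheses of Lemmas \ref{molteplicita1cuspide}, \ref{flesso} and \ref{molteplicita2cuspide} are meant to be invoked.
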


\subsection*{Acknowledgment} I would like to express my deep
gratitude to Prof. C. Ciliberto and Prof. J. Harris for many useful
conversations I had with both of them on the subject of this paper.
I have also enjoyed and benefited from conversation with Maksym
Fedorchuk. Moreover, I would like to thank Prof. L. Caporaso and
Prof. S. Kleiman for giving me useful references at the beginning of
this work. Finally, I am grateful to the referee for helpful
comments and corrections.

\end{document}